\newtheorem{theorem}{Theorem}[section]
\newtheorem{lemma}[theorem]{Lemma}
\newtheorem{corollary}[theorem]{Corollary}
\newtheorem{proposition}[theorem]{Proposition}
\newtheorem{observation}[theorem]{Observation}
\theoremstyle{definition}
\newtheorem{definition}[theorem]{Definition}
\newtheorem{question}[theorem]{Question}
\newtheorem{example}[theorem]{Example}
\newtheorem{remark}[theorem]{Remark}
\newtheorem{problem}[theorem]{Problem}
\newtheorem{notation}[theorem]{Notation}
\newcommand{\customlabel}[2]{%
\protected@write \@auxout {}{\string \newlabel {#1}{{#2}{}}}}
\newcommand{\iLim}{\varprojlim}
\newcommand{\Fl}{Fra\"{\i}ss\'e limit }
\newcommand{\F}{Fra\"{\i}ss\'e  }
\newlength{\wdth}
\DeclareMathOperator{\card}{card}
\DeclareMathOperator{\diam}{diam}
\DeclareMathOperator{\bd}{bd}
\DeclareMathOperator{\ord}{ord}
\DeclareMathOperator{\len}{len}
\DeclareMathOperator{\id}{id}
\def\g{\mathcal{G}}
\def\gg{\mathbb{G}}
\newcommand{\fra}{Fra\"\i ss\'e}
\newcommand{\blu}[1]{{\color{blue}{#1}}}
\renewcommand{\blu}[1]{#1}
\begin{document}

\title[Projective Fra\"{\i}ss\'{e} limits, graphs, and confluent maps]{The Projective Fra\"{\i}ss\'{e} limit of the family of all connected finite graphs with confluent epimorphisms}

\author[W. J. Charatonik]{W\l odzimierz J. Charatonik}

\address{W. J. Charatonik, Department of Mathematics and Statistics\\
         Missouri University of Science and Technology\\
         400 W 12th St\\
         Rolla MO 65409-0020}

\author[A. Kwiatkowska]{Aleksandra Kwiatkowska}
\address{A. Kwiatkowska, Institut f\"{u}r Mathematische Logik und Grundlagenforschung, Universit\"{a}t  M\"{u}nster,  
Einsteinstrasse 62,
48149  M\"{u}nster,
Germany {\bf{and}} 
Instytut Matematyczny, Uniwersytet Wroc{\l}awski,  pl. Grunwaldzki 2/4, 50-384 Wroc{\l}aw, Poland, }
\email{kwiatkoa@uni-muenster.de}

\author[R. P. Roe]{Robert P. Roe}
\address{R. P. Roe, Department of Mathematics and Statistics\\
         Missouri University of Science and Technology\\
         400 W 12th St\\
         Rolla MO 65409-0020}
\curraddr{Robert P. Roe, 1407 Athene Dr, Lafayette CO 80026 USA}
\email{rroe@mst.edu}
\thanks{Funded by the Deutsche Forschungsgemeinschaft (DFG, German Research Foundation) under Germany’s Excellence Strategy EXC 2044–390685587, Mathematics Münster: Dynamics–Geometry–Structure and by CRC 1442 Geometry: Deformations and Rigidity.}

\date{\today}
\subjclass[2020]{03C98, 54D80, 54E40, 54F15, 54F50}

\keywords{projective Fra\"{\i}ss\'e limit, continuum theory, topological graphs, confluent maps, homogeneous}

\begin{abstract}
We investigate the projective Fra\"{\i}ss\'e family of finite connected graphs with confluent epimorphisms and the continuum obtained as the topological realization of its projective Fra\"{\i}ss\'e limit.
This continuum was unknown before. We prove that it is indecomposable, but not hereditarily indecomposable, one-dimensional, Kelley, pointwise self-homeomorphic, but not homogeneous. It is hereditarily unicoherent and 
each point is the top of the Cantor fan.
 Moreover, the universal solenoid, the universal pseudo-solenoid, and the pseudo-arc may be embedded in it.

\end{abstract}

\maketitle

\section{Introduction}

In 2006, Irwin and Solecki \cite{Pseudo} initiated the study of projective Fra\"{\i}ss\'{e} limits creating a bridge between logic and continuum theory.  
In their article they constructed the pseudo-arc as the topological realization of the projective Fra\"{\i}ss\'{e} limit of the projective Fra\"{\i}ss\'{e} family  consisting of all finite linear (combinatorial) graphs and all epimorphisms, i.e., surjective mappings between the finite graphs that preserve the edge relation.

The study of projective Fra\"{\i}ss\'{e} limits has in recent years caught the attention of many researchers. They used properties of the mappings between finite structures, in particular finite graphs, to construct and investigate various compact one-dimensional metric spaces, such as the Lelek fan, generalized Wa\.zewski dendrites or Knaster continua, see for example \cite{B-C}, \cite{B-K},  \cite{wazewski-fraisse}, \cite{SI}, \cite{LW}.
Compact spaces were also studied in a framework  of an approximate Fra\"{\i}ss\'{e} theory, see for example
\cite{Bar-Kub} or \cite{Kub-Kwi}, where in particular, the Poulsen simplex or  pseudo-solenoids were realized as Fra\"{\i}ss\'{e} limits.

Finite graphs are discrete as topological spaces. Therefore they are topologically totally disconnected. Nevertheless, they can be connected as graphs.
In \cite{Menger}, Panagiotopoulos and Solecki introduced appropriate definitions for connectedness and for maps between topological graphs preserving connectedness,
which take into account both the topology and the edge relation. They showed that the family of finite connected graphs with monotone epimorphisms is a projective Fra\"{\i}ss\'{e} family whose topological realization is the Menger curve.
In \cite{WJC-RPR-Fraisse}, Charatonik, Kwiatkowska,  Roe, and Yang considered finite trees with epimorphisms, that are monotone, confluent, order-preserving, retractions, light, etc. As topological realizations of the various Fra\"{\i}ss\'{e} limits they obtained known continua, such as the Cantor fan and the generalized Wa\.zewski dendrite $D_3$, as well as previously unknown continua.

In the present
article, we study topological properties of the
 continuum obtained as the topological realization of the  projective Fra\"{\i}ss\'{e} limit of the family of finite
connected graphs with confluent epimorphisms.
 Confluent maps between continua were introduced by J.~J.~Charatonik \cite{JJC-Confluent} as a generalization of monotone maps and have been very well studied. Confluent maps between finite connected graphs are discretizations of such maps. 
We summarize our results in the following theorems.

\begin{theorem}\label{main-list}
The family ${\mathcal G}$ of finite connected graphs with confluent epimorphisms is a projective \F family. Its projective \F limit 
$\mathbb G$ has a transitive edge relation and its topological realization $|\mathbb G|$ is a continuum that  has the following properties:

\begin{enumerate}
    \item it is a Kelley continuum;
    \item it is one-dimensional;
     \item it is an indecomposable continuum;
    \item all arc components are dense; 
   \item each point is the top of the Cantor fan;
    \item it is pointwise self-homeomorphic;
    \item it is hereditarily unicoherent, in particular, the circle $S^1$ does not embed in it;
    \item the pseudo-arc, the universal pseudo-solenoid, and the universal solenoid embed in it.
  
\end{enumerate}
\end{theorem}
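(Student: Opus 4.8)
The plan is to treat the statement as an omnibus summary and to organize the argument around three groups: first the abstract \F-theoretic facts, then the structural properties of $|\mathbb{G}|$ that follow from approximating by finite graphs, and finally the delicate ``shape'' properties (indecomposability, Cantor fans, non-homogeneity). I would begin by verifying that $\mathcal{G}$ is a projective \F class. Countability up to isomorphism and the joint projection property are routine; the crux is the projective amalgamation property for confluent epimorphisms. Given confluent epimorphisms $f\colon C\to A$ and $g\colon D\to A$, I would build an amalgam $E$ as a connected refinement of the pullback $\{(c,d): f(c)=g(d)\}$ equipped with the induced edge relation, and then verify that the two coordinate projections are confluent and surjective. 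The main obstacle in this step is that the naive pullback need not be connected and its projections need not be confluent, so one has to thicken or subdivide the graphs---replacing single edges by longer paths and adding vertices---so that every component of a preimage of a connected subgraph is mapped onto it. I would then record transitivity of the edge relation on $\mathbb{G}$: using the projection (extension) property of the limit, any witnessed pair of edges $(a,b),(b,c)$ can be realized in a finite approximation in which confluence forces the edge $(a,c)$ to appear, and passing to the limit gives transitivity.

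Next I would dispatch the properties that come directly from the inverse-limit presentation $|\mathbb{G}|=\varprojlim(|G_n|,\pi_n)$ with confluent bonding maps, where each $|G_n|$ is the topological realization of a finite connected graph, hence a Peano continuum. Connectedness and compactness give that $|\mathbb{G}|$ is a continuum, the edge relation being a closed equivalence relation by transitivity, so the quotient is well behaved. That $|\mathbb{G}|$ is Kelley follows from the theorem that a confluent inverse limit of Kelley continua is Kelley, applied to the finite graphs, which are trivially Kelley. One-dimensionality is immediate: an inverse limit of one-dimensional compacta has dimension at most one, and a nondegenerate continuum has dimension at least one. Hereditary unicoherence I would obtain by showing that confluence of the bonding maps prevents the appearance of a multicoherent subcontinuum: if some subcontinuum $K$ failed to be unicoherent, its projections would eventually be multicoherent subgraphs whose preimages, by confluence, force the same defect to persist, contradicting the tree-like behavior enforced in the limit.

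The genuinely hard properties are the three ``shape'' statements. For indecomposability I would prove the equivalent assertion that every proper subcontinuum is nowhere dense. Suppose $K$ is a subcontinuum with nonempty interior; projecting to a finite stage and using that $\pi_n(K)$ is a connected subgraph with nonempty interior in $|G_n|$, I would invoke confluence of the bonding maps to show that the preimages of $\pi_n(K)$ must exhaust larger and larger portions of later graphs, so that $K$ cannot be proper. This is the step I expect to be the main obstacle, since it requires a quantitative control---via the amalgamation and projection properties---of how a subgraph with interior spreads out under confluent preimages. For the Cantor-fan statement I would, fixing a point $p$, use the projection property to produce for each finite pattern of arcs emanating from the corresponding vertex a realization in the limit, and assemble these into a map of the cone over a Cantor set that is a homeomorphism onto its image with apex $p$; density of all arc components then follows by combining this abundance of arcs with the density of composants guaranteed by indecomposability.

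Finally, pointwise self-homeomorphism and the failure of homogeneity. Pointwise self-homeomorphism is the local shadow of the homogeneity of the \F limit: any two points of $\mathbb{G}$ have isomorphic neighborhoods in cofinally many approximations, by the homogeneity and extension properties of the limit, and such local isomorphisms descend to homeomorphisms between neighborhoods in $|\mathbb{G}|$ carrying one point to the other. To show $|\mathbb{G}|$ is nevertheless not homogeneous, I would exhibit a topological invariant that is not constant on points; the natural candidate is a global feature of the arc component, or of the way the Cantor fan at $p$ sits inside the whole continuum, that distinguishes certain points from the remaining ones. Producing such an invariant---one that is genuinely global, so as not to contradict the local homogeneity just established---is the second place where I expect real difficulty, and I would search for it among the composant or arc-component structure, where indecomposability already provides a rich supply of homeomorphism-invariant distinctions.
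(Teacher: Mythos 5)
Your proposal is an outline rather than a proof, and it contains several substantive gaps and one structural misconception. The misconception first: in the projective Fra\"{\i}ss\'e framework the finite graphs $G_n$ are discrete vertex sets with an edge relation, their inverse limit $\mathbb{G}$ is a zero-dimensional compactum, and $|\mathbb{G}|$ is the quotient of $\mathbb{G}$ by the (transitive, closed) edge relation --- it is \emph{not} an inverse limit of Peano continua $|G_n|$ with confluent bonding maps. This matters for your dimension argument: the paper gets $\dim|\mathbb{G}|\le 1$ from the fact that the quotient map has fibers of cardinality at most $2$ over a zero-dimensional domain (via the theorem $\dim(Y)\le\dim(X)+m-1$ when boundaries of fibers have at most $m$ points), not from ``inverse limits of one-dimensional compacta are one-dimensional.'' Relatedly, your mechanism for transitivity is backwards: transitivity of $E(\mathbb{G})$ is \emph{not} obtained by confluence forcing the edge $\langle a,c\rangle$ to appear; it is obtained by splitting edges in finite approximations (inserting a new vertex $s$ on the edge $\langle a,b\rangle$) so that no two consecutive nondegenerate edges survive in preimages, which makes the limit relation so sparse that each point has at most one edge-partner and transitivity holds for that reason. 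Your claim that arc-component density ``follows from the density of composants guaranteed by indecomposability'' is also not valid: composants of an indecomposable continuum are dense, but an arc component is in general a proper subset of a composant, and the paper has to prove density directly by lifting arcs through confluent epimorphisms (the lemma that a confluent epimorphism onto an arc admits a monotone restriction to an arc with prescribed endpoint). Finally, your reading of pointwise self-homeomorphism as ``any two points have isomorphic neighborhoods'' conflates it with local homogeneity; the actual property is that every neighborhood of $x$ contains a copy of the \emph{whole} space through $x$, which the paper proves by showing that small components of point-preimages are themselves inverse limits of Fra\"{\i}ss\'e sequences for $\mathcal{G}$.

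The two items you yourself flag as the main obstacles are precisely where the paper's real content lies, and your proposal supplies no argument for either. For indecomposability the paper does not run a ``proper subcontinua are nowhere dense'' spreading argument; it gives a concrete finite witness: from any $F$ it builds a chain of $2n+1$ copies of $F$ (one per nondegenerate edge, twice over, linked by single edges) mapping confluently onto $F$, so that any two connected subgraphs covering the chain force one of them to meet at least $n+1$ copies and hence project onto all of $F$; a general lemma then transfers this to indecomposability of the limit. For non-homogeneity, ``search for a global invariant among the composant structure'' is not a proof strategy that closes the gap: the paper's invariant is \emph{membership in a solenoid}, and establishing it occupies an entire section --- one must show a dense set of points lies on embedded solenoids (built from cycles with wrapping maps of growing winding number), develop the machinery of almost graph-solenoids to recognize any solenoid sitting inside $|\mathbb{G}|$ combinatorially, and then construct, via an ``unfolding'' of the Fra\"{\i}ss\'e sequence, a specific point whose finite approximations are adjacently disconnecting and therefore cannot lie on any almost graph-solenoid. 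Without some version of this construction (or a genuinely different invariant), property (8) remains unproved, and it is the main theorem of the paper.
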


In Section \ref{nine}, we will show that topological realizations of projective Fra\"{\i}ss\'{e} limits of many projective Fra\"{\i}ss\'{e} families of cycles with confluent epimorphisms exist and are solenoids, see Theorem \ref{graphtosol}.

Since the \F limit of finite graphs with monotone bonding maps is the Menger universal curve, see \cite{Menger}, which is homogeneous, the question arises if the same is true with confluent maps. Understanding solenoids that can be embedded in the \F limit leads to a major and somewhat surprising result of the article, that the continuum $|\mathbb G|$ is not homogeneous, which we accomplish in Section \ref{nine}. 
Rogers \cite{Rogers} proved that
each homogeneous, hereditarily indecomposable curve (i.e. continuum of dimension 1) is
tree-like. Note that we cannot apply his theorem since $|\mathbb G|$
is not hereditarily indecomposable. 
To accomplish the non-homogeneity, we show that some points of  $|\mathbb G|$ belong to a solenoid and others do not. In particular, we show:

\begin{theorem}\label{mainabc}
Let ${\mathcal G}$, $\mathbb G$, and $|\mathbb G|$ be as in Theorem \ref{main-list}. Then the following hold.
\begin{enumerate}
\item Let $S_{univ}$ be the universal solenoid. The set 
$$\{x\in |\mathbb G|\colon \text{ there is an embedding } \tau\colon S_{univ}\to |\mathbb G| \text{ such that } x\in\tau(S_{univ})\} $$
is dense in $|\mathbb G|$.

\item The set 
$$\{y\in |\mathbb G|\colon \text{for every solenoid } S \text{ and every embedding } \tau\colon S\to |\mathbb G| \text{ we have } y\notin\tau(S)\} $$
is dense in $|\mathbb G|$.

\item The universal solenoid is the only solenoid which embeds in~$|\mathbb G|$.

\end{enumerate}

\end{theorem}

To prove Theorem \ref{mainabc}(2) in Section \ref{nine} we analyze topological subgraphs of $\mathbb{G}$ whose topological realizations are solenoids. These turn out to be not just inverse limits of finite cycles with confluent bonding maps. In fact, we have to deal with more general topological graphs,  which we call almost graph-solenoids.

Theorem \ref{mainabc} immediately yields the following.
\begin{corollary}
The continuum $|\mathbb G|$ is not homogeneous.

\end{corollary}

Speaking with researchers in continuum theory it appears that there is no previously known continuum having these properties. We do not know if the properties listed here characterize $|\mathbb G|$.

\section{Definitions and background results}

A {\it graph} is an ordered pair $A=( V(A), E(A))$, where $E(A)\subseteq V(A)^2$ is a reflexive and symmetric relation on $V(A)$. The elements of $V(A)$ are called {\it vertices} of graph $A$ and elements of $E(A)$ are called {\it edges}. 
For the sake of simplicity of notation we will assume that  $\langle x,y\rangle$ represents both the edge $\langle x,y\rangle$ and the edge $\langle y,x\rangle$. If $A$ and $B$ are graphs then $A\setminus B$ denotes the graph with $V(A\setminus B) = V(A)\setminus V(B)$ and $\langle x,y\rangle \in E(A\setminus B)$ if and only if $x,y \in V(A\setminus B)$ and $\langle x,y \rangle \in E(A)$. 
We say that a graph $A$ is a \textit{subgraph} of a graph $B$, and denote it by $A\subseteq B$, if  $V(A) \subseteq V(B)$ and 
$\langle x,y\rangle \in E(A)$ if and only if $\langle x,y\rangle \in E(B)$, for $x,y\in V(A)$.

A {\it topological graph} ${\bf K}$ is a graph $( V({\bf K}),E({\bf K}))$, whose domain $V({\bf K})$ is a 0-dimensional, compact, second-countable (thus has a metric) space and $E({\bf K})$ is a closed, reflexive and symmetric subset of $ V({\bf K})^2$. A topological graph is an example of a topological $\mathcal{L}$-structure, in the sense of Irwin-Solecki
\cite{Pseudo}. From now on we will often write ${\bf A}$ when we mean $V({\bf A})$, where ${\bf A}$ is a topological graph.

Given two topological graphs ${\bf A}$ and ${\bf B}$, a continuous function $f\colon {\bf A}\to {\bf B}$ is a {\it homomorphism} if it maps edges to edges, i.e.  $\langle {\bf a},{\bf b}\rangle \in E({\bf A})$ implies $\langle f({\bf a}),f({\bf b})\rangle \in E({\bf B})$.  
A homomorphism $f$ is an {\it epimorphism} if it is moreover surjective
on both vertices and edges. An {\it isomorphism} is an injective epimorphism. A homomorphism $f\colon {\bf A} \to {\bf B}$ is an {\it embedding} if it is an isomorphism onto a subgraph of ${\bf B}$.

The theory of projective Fra\"{\i}ss\'e limits was developed in \cite{Pseudo} and further refined in \cite{Menger}. We literally recall their definitions here.

\begin{definition}\label{definition-Fraisse}
Let $\mathcal{F}$ be a family of nonempty finite graphs with a fixed family of epimorphisms between
the graphs in $\mathcal{F}$.  We say that $\mathcal{F}$ is a {\it projective Fra\"{\i}ss\'e family } if
\begin{enumerate}
\item $\mathcal{F}$ is countable up to isomorphism, that is, any sub-collection of pairwise
non-isomorphic graphs of $\mathcal{F}$ is countable;
\item epimorphisms are closed under composition and each identity map is an epimorphism;
\item for $B,C \in  \mathcal{F}$ there exist $D\in \mathcal{F}$ and epimorphisms $f \colon D \to B$ and $g \colon D \to C$ in $\mathcal{F}$;
and
\item for $A,B,C\in \mathcal{F}$ and for every two epimorphisms $f \colon B \to A$ and $g \colon C \to A$ in $\mathcal{F}$, there exist $D \in \mathcal{F}$ and epimorphisms
$f_0 \colon D \to B$ and $g_0 \colon D\to C$ in $\mathcal{F}$ such that $f \circ f_0 = g \circ g_0$, i.e. the diagram (D1) commutes.
\end{enumerate}

\begin{equation}\tag{D1}
\begin{tikzcd}
&B\arrow{ld}[swap]{f}\\
A&&D\arrow[lu,swap,dotted,"f_0"] \arrow[ld,dotted,"g_0"]\\
&C\arrow[lu,"g"]
\end{tikzcd}
\end{equation}

The last property is known as the {\it projective amalgamation property}.
\end{definition}
Whenever we consider a projective Fra\"{\i}ss\'e family we identify isomorphic graphs.

\begin{definition}
Given $\varepsilon>0$, a map $f\colon G \to H$, where $G$ is a metric space with metric $d$, is said to be an {\em $\varepsilon$-map} if for each $x \in H$ the diameter of $f^{-1}(x)$ is less than  $ \varepsilon$. An $\varepsilon$-map which is an epimorphism is called an {\em $\varepsilon$-epimorphism}.

\end{definition}

\begin{notation}
Given sequences $F_n$, of finite graphs, and epimorphisms (homomorphisms, surjections) $\alpha_n\colon F_{n+1} \to F_n$ we denote the inverse sequence by $\{F_n,\alpha_n\}$. For $m > n$ we let $\alpha_n^m= \alpha_n\circ \dots \circ \alpha_{m-1}$ and note that $\alpha_n^{n+1} = \alpha_n$. Given an inverse sequence $\{F_n,\alpha_n\}$, the associated inverse limit space is the subspace of the product space $\Pi F_i$ determined by $\{(x_1,x_2,\ldots) \colon x_i \in F_i \text{ and } x_i = \alpha_i(x_{i+1})\}$ and is denoted as ${\bf F}=\iLim\{F_n,\alpha_n\}$.   Further, we denote the canonical projection from the inverse limit space ${\bf F}$ to the $n$th factor space $F_n$ by $\alpha_n^{\infty}$.
\end{notation}

The family $\mathcal F$ of finite graphs and epimorphisms is enlarged to a family $\mathcal F^\omega$ which includes all topological graphs obtained as inverse limits of graphs in $\mathcal F$ with bonding maps from the family of epimorphisms.  If ${\bf G}=\iLim\{G_n,\alpha_n\} \in \mathcal F^\omega$ and ${{\bf a}}=(a_n)$ and ${\bf{b}}=(b_n)$ are elements of ${\bf G}$ then $\langle {{\bf a}},{\bf b}\rangle$ is an edge in ${\bf G}$ if and only if for each $n$, $\langle a_n, b_n\rangle$ is an edge in $G_n$. An epimorphism $h$ between a topological graph ${\bf G}=\iLim\{G_n,\alpha_n\}$ in $\mathcal F^\omega$ and  $A\in \mathcal F$ is in the family $\mathcal F^\omega$ if and only if there is an $m$ and an epimorphism  $h'\colon G_m \to A$, $h'\in \mathcal F$, such that $h= h'\circ \alpha_m^\infty$. Note that this implies that each $\alpha^\infty _n$ is in $\mathcal F^\omega$. Finally, if ${\bf K}$ and ${\bf L}$ are in $\mathcal F^\omega$ an epimorphism $h\colon {\bf L} \to {\bf K}$ is in the family $\mathcal F^\omega$ if and only if for any  $A \in \mathcal F$ and any epimorphism $g\colon {\bf K} \to A$ in $\mathcal F^\omega$, $g\circ h \in \mathcal F^\omega$.

\begin{lemma}\label{ladder}
Let $\{F_n,\alpha_n\}$ be an inverse sequence of finite graphs, where each $\alpha_n$ is an epimorphism. Let ${\bf F}$ denote its inverse limit.
\begin{enumerate}
\item If $A$ is a finite graph and $f\colon {\bf F}\to A$ is an epimorphism, then there is $n$ and an epimorphism  $\gamma\colon F_n\to A$ such that $f=\gamma\circ\alpha^\infty_n$.
\item Suppose that $\{G_m,\beta_m\}$ is another inverse sequence of finite graphs, where each $\beta_m$ is an epimorphism, such that ${\bf F}$ is its inverse limit.
Then there are subsequences $(n_i)$ and $(m_j)$ and there are epimorphisms $g_i\colon F_{n_i}\to G_{m_i}$ and $h_i\colon G_{m_{i+1}}\to F_{n_i}$ such that
$\alpha^{n_{i+1}}_{n_i}=h_ig_{i+1}$ and $\beta^{m_{i+1}}_{m_i}=g_ih_i$.
\end{enumerate}

\end{lemma}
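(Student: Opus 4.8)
The plan is to prove (1) by a careful inverse-limit argument and then to deduce (2) from (1) by an interleaving construction. Throughout I will use that every projection $\alpha^\infty_n\colon F\to F_n$ (and likewise each $\beta^\infty_m$) is an epimorphism: it is onto vertices by the usual inverse-limit argument (the fibres are inverse limits of nonempty finite sets with surjective bonding maps, hence nonempty), and onto edges because any edge of $F_n$ lifts through the edge-surjective maps $\alpha_n,\alpha_{n+1},\dots$ to a thread, which is an edge of $F$ by the definition of the edge relation. For (1) I first factor $f$ on vertices. The fibres $\{f^{-1}(a):a\in A\}$ form a finite clopen partition of $F$, while the sets $(\alpha^\infty_n)^{-1}(x)$, for $x\in F_n$ and $n\in\mathbb N$, are a clopen basis that refines as $n$ increases; by compactness each $f^{-1}(a)$ is a finite union of such basic sets, so for some $n_0$ every fibre of $\alpha^\infty_{n_0}$ lies in a single fibre of $f$. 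As $\alpha^\infty_{n_0}$ is onto, setting $\gamma(x)=f(\mathbf z)$ for any $\mathbf z$ with $\alpha^\infty_{n_0}(\mathbf z)=x$ gives a well-defined surjection $\gamma\colon F_{n_0}\to A$ with $f=\gamma\circ\alpha^\infty_{n_0}$.

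The heart of (1) is to promote $\gamma$ to an epimorphism by arranging that it preserves edges, which may force passage to a larger stage. Because $f$ is a homomorphism, $E(F)\subseteq(f\times f)^{-1}(E(A))=:U$, and $U$ is clopen in $F\times F$. By definition of the edge relation, $E(F)=\bigcap_n C_n$ with $C_n=(\alpha^\infty_n\times\alpha^\infty_n)^{-1}(E(F_n))$ a decreasing sequence of clopen sets (decreasing since each $\alpha_n$ is a homomorphism). From $\bigcap_n C_n\subseteq U$ and compactness there is $N\ge n_0$ with $C_N\subseteq U$. Put $\gamma_N=\gamma\circ\alpha^N_{n_0}$, so still $f=\gamma_N\circ\alpha^\infty_N$. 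If $\langle x,x'\rangle\in E(F_N)$, choose threads $\mathbf z,\mathbf z'$ over $x,x'$; then $(\mathbf z,\mathbf z')\in C_N\subseteq U$, whence $\langle\gamma_N(x),\gamma_N(x')\rangle=\langle f(\mathbf z),f(\mathbf z')\rangle\in E(A)$, so $\gamma_N$ is a homomorphism. It is onto vertices since $f$ is, and onto edges because $f$ is and every edge of $F$ projects to an edge of $F_N$. Taking $n=N$ and $\gamma=\gamma_N$ proves (1).

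For (2), since the projections of both sequences are epimorphisms, part (1) applies to each. I first record a persistence fact: if an epimorphism out of $F$ factors through $\beta^\infty_m$, it factors through $\beta^\infty_{m'}$ for every $m'\ge m$ (precompose the factor with $\beta^{m'}_m$), and similarly for the $\alpha$'s; hence the stages produced by (1) can always be taken arbitrarily large. Now build the subsequences by alternation: fix $m_1$; use (1) to get $n_1$ and an epimorphism $g_1\colon F_{n_1}\to G_{m_1}$ with $\beta^\infty_{m_1}=g_1\circ\alpha^\infty_{n_1}$; use (1) again to get $m_2>m_1$ and $h_1\colon G_{m_2}\to F_{n_1}$ with $\alpha^\infty_{n_1}=h_1\circ\beta^\infty_{m_2}$; continue, at stage $i$ obtaining $g_i\colon F_{n_i}\to G_{m_i}$ with $\beta^\infty_{m_i}=g_i\circ\alpha^\infty_{n_i}$ (with $n_i>n_{i-1}$) and $h_i\colon G_{m_{i+1}}\to F_{n_i}$ with $\alpha^\infty_{n_i}=h_i\circ\beta^\infty_{m_{i+1}}$ (with $m_{i+1}>m_i$).

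It remains to check the two ladder identities, which follows from surjectivity of the projections. Composing $h_i\circ g_{i+1}$ with $\alpha^\infty_{n_{i+1}}$ gives $h_i\circ(g_{i+1}\circ\alpha^\infty_{n_{i+1}})=h_i\circ\beta^\infty_{m_{i+1}}=\alpha^\infty_{n_i}=\alpha^{n_{i+1}}_{n_i}\circ\alpha^\infty_{n_{i+1}}$; since $\alpha^\infty_{n_{i+1}}$ is onto the vertices of $F_{n_{i+1}}$, the maps $h_ig_{i+1}$ and $\alpha^{n_{i+1}}_{n_i}$ coincide. Symmetrically, composing $g_i\circ h_i$ with $\beta^\infty_{m_{i+1}}$ gives $g_i\circ\alpha^\infty_{n_i}=\beta^\infty_{m_i}=\beta^{m_{i+1}}_{m_i}\circ\beta^\infty_{m_{i+1}}$, so $g_ih_i=\beta^{m_{i+1}}_{m_i}$. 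I expect the edge-preservation step of part (1) — passing to a stage $N$ where $C_N\subseteq U$ — to be the one genuine obstacle; once (1) is in hand, part (2) is bookkeeping with the projections together with the fact that a map out of $F_n$ or $G_m$ is pinned down by precomposition with the surjective projection.
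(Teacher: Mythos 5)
Your proof is correct and follows essentially the same route as the paper: part (1) by choosing a stage at which the fibres of $\alpha^\infty_n$ refine those of $f$ and pushing $f$ forward, and part (2) by alternating applications of part (1) and cancelling the surjective projections $\alpha^\infty_{n_{i+1}}$, $\beta^\infty_{m_{i+1}}$. The only divergence is in checking that $\gamma$ preserves edges, where you pass to a possibly larger stage $N$ via a compactness argument on the clopen sets $C_n$; this is valid but not necessary, since every edge of $F_n$ already lifts to an edge of $F$ (the bonding maps being surjective on edges), so the pushed-forward map is a homomorphism at the original stage.
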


\begin{proof}

For (1), take any $n$ such that the partition $\{(\alpha^\infty_n)^{-1}(x)\colon x\in F_n\}$ refines the partition $\{f^{-1}(x)\colon x\in A\}$.
We let for $z\in F_n$, $\gamma(z)$ to be $f(t)$ for any $t\in (\alpha^\infty_n)^{-1}(z)$. Since $f$ and $\alpha^\infty_n$ are epimorphism, by Lemma 2.1 in \cite{Pseudo}, $\gamma$ is an epimorphism.
For (2), if we already constructed $(g_i)$ and $(h_i)$ for $i<i_0$, to get $g_{i_0}$ apply part (1) to $f_1=\beta^\infty_{m_{i_0}}$ and get $n_{i_0}$ and $g_{i_0}$ such that
$f_1=g_{i_0}\circ\alpha^\infty_{n_{i_0}}$. Next, we apply part (1) to $f_2=\alpha^\infty_{n_{i_0}}$ and get $m_{{i_0+1}}$ and $h_{i_0}$ such that 
$f_2=h_{i_0}\circ\beta^\infty_{m_{i_0+1}}$. Those $g_i$ and $h_i$ are as required.
\end{proof}

Note that we may replace epimorphism with surjection in the previous lemma. In the proof of Theorem \ref{notuniversal}, we will use the following statement.
\begin{lemma}\label{laddersur}
Let $\{F_n,\alpha_n\}$ be an inverse sequence of finite graphs, where each $\alpha_n$ is a surjection. Let ${\bf F}$ denote its inverse limit.
If $A$ is a finite graph and $f\colon {\bf F}\to A$ is surjection, then there is $n$ and a surjection  $\gamma\colon F_n\to A$ such that $f=\gamma\circ\alpha^\infty_n$.
\end{lemma}

\blu{Lemma \ref{ladder} motivates the following definition. Let $\mathcal F$ be a family of finite graphs with a possibly restricted family of epimorphisms. We say that $\mathcal{F}$ is {\it consistent} if for any topological graph ${\bf F}$ in $\mathcal F^\omega$, which can be represented by isomorphic inverse limits $\iLim \{F_n,\alpha_n\}$ and $\iLim \{G_n,\beta_n\} $, where $\alpha_n, \beta_n\in\mathcal F$, there are subsequences $(n_i)$ and $(m_j)$ and  epimorphisms $g_i\colon F_{n_i}\to G_{m_i}$, $h_i\colon G_{m_{i+1}}\to F_{n_i}$ with $h_i, g_i\in\mathcal F$ such that
$\alpha^{n_{i+1}}_{n_i}=h_ig_{i+1}$, $\beta^{m_{i+1}}_{m_i}=g_ih_i$,
$\beta^\infty_{m_{i}}=g_{i}\circ\alpha^\infty_{n_{i}}$,
and $\alpha^\infty_{n_{i}}=h_{i}\circ\beta^\infty_{m_{i+1}}$. 
 
The first two of these equalities show that $(h_i)$ induces an isomorphism $h_\infty \colon \iLim \{G_n,\beta_n\} \to \iLim \{F_n,\alpha_n\}$ (equivalently, $(g_i)$ induces isomorphism $g_\infty \colon \iLim \{F_n,\alpha_n\} \to \iLim \{G_n,\beta_n\})$ and the last two equalities show that $h_\infty$ (equivalently $g_\infty$) is the identity.}

\blu{The consistency of $\mathcal{F}$ guarantees that the definition of an epimorphism in $\mathcal{F}^\omega$ does not depend on the inverse limit representation of a structure. Indeed, suppose that $\iLim \{F_n,\alpha_n\}$ and $\iLim \{G_n,\beta_n\} $ are isomorphic, where $\alpha_n, \beta_n\in\mathcal F$, and let
 $f\colon \iLim\{F_n,\alpha_n\}\to A$ be an epimorphism in $\mathcal{F}^\omega$.
  Let $m$ and
$f'\colon F_m \to A$ with $f'\in \mathcal F$, be such that $f= f'\circ \alpha_m^\infty$. 
Take any $n_i\geq m$  and consider $h_i\colon G_{m_{i+1}}\to F_{n_i}$. Then $f= f'\circ \alpha^{n_i}_m\circ h_i\circ \beta^\infty_{m_{i+1}}$ and $f'\circ \alpha^{n_i}_m\circ h_i\in\mathcal F$.}

The following, from \cite{Menger} is an adaptation of the basic result that first appeared in \cite{Pseudo}, which allows us to work with subfamilies of epimorphisms.
We emphasize that in \cite{Pseudo} the authors considered more general structures known as topological $\mathcal L$-structures. Topological graphs, as defined here, are examples of topological $\mathcal L$-structures.

\begin{theorem}\label{limit}
 Let $\mathcal {F}$ be a projective Fra\"{\i}ss\'e family with a fixed collection of epimorphisms between graphs of $\mathcal{F}$. 
 Then there exists a unique topological graph  $\mathbb{F}$ in $\mathcal{F}^\omega$ such that
\begin{enumerate}
    \item for each $A\in \mathcal{F}$, there exists an epimorphism in $\mathcal{F}^\omega$
    from $\mathbb{F}$ onto~$A$;
    \item for $A,B \in \mathcal{F}$ and epimorphisms $f\colon \mathbb{F} \to A$ and $g\colon B\to A$
   in $\mathcal{F}^\omega$
    there exists an epimorphism $h\colon \mathbb{F}\to B$
  in $\mathcal{F}^\omega$
    such that $f=g\circ h$.
    \item\label{refinement}
For every $\varepsilon>0$  there is a graph $G\in \mathcal F$ and an $\varepsilon$-epimorphism $f\colon \mathbb{F} \to G$     in $\mathcal{F}^\omega$.

\end{enumerate}
\end{theorem}
The topological graph $\mathbb F$ from Theorem \ref{limit} is called the {\it projective Fra\"{\i}ss\'e limit} of $\mathcal F$.

\begin{definition}\label{fund-seq-def}

Given a projective \F family $\mathcal F$, an inverse sequence $\{F_n,\alpha_n\}$ where $F_n\in {\mathcal F}$ and $\alpha_n\colon F_{n+1} \to F_n$  are epimorphisms in $\mathcal{F}$, we say that  $\{F_n,\alpha_n\}$ is a {\it Fra\"{\i}ss\'e sequence} for $\mathcal F$ if the following two conditions hold.

\begin{enumerate}
    \item For any $G\in {\mathcal F}$ there is an $n$ and an epimorphism in $\mathcal{F}$ from $F_n$ onto $G$;
    \item For any $n$, any pair $G, H \in {\mathcal F}$, and any epimorphisms $g\colon H \to G$ and $f\colon F_n \to G$ in $\mathcal{F}$ there exists $m >n$ and an epimorphism $h\colon F_m \to H$ in $\mathcal{F}$ such that $g\circ h = f\circ \alpha_n^m$  i.e the diagram (D2) commutes.
\end{enumerate}

\begin{equation}\tag{D2}
\begin{tikzcd}
F_n\arrow{d}[swap]{f}&F_m\arrow[d,dotted,"h"]\arrow{l}[swap]{\alpha^m_n} \\
G&\arrow[l,"g"]H
\end{tikzcd}
\end{equation}

\end{definition}

The following follows from \cite[Theorem 2.4]{Pseudo}.

\begin{theorem}\label{fund-sequence}
Given a projective \F family $\mathcal F$, there exists a Fra\"{\i}ss\'e sequence $\{F_i,g_i\}$ for $\mathcal F$, and the inverse limit of any Fra\"{\i}ss\'e sequence for $\mathcal F$ is isomorphic to the projective \F limit $\mathbb F$.
\end{theorem}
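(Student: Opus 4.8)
The plan is to prove the statement in two stages: first to construct a \F sequence $\{F_n,\alpha_n\}$ for $\mathcal F$ by a standard back-and-forth bookkeeping argument, and then to verify that its inverse limit $\mathbf F=\iLim\{F_n,\alpha_n\}$ satisfies the three characterizing properties of Theorem~\ref{limit}, so that the uniqueness clause of that theorem forces $\mathbf F\cong\mathbb F$.

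For the construction, I would fix (using property (1) of Definition~\ref{definition-Fraisse}, countability up to isomorphism) an enumeration of all isomorphism types of objects of $\mathcal F$ and of all epimorphisms between them. I would then build $F_1\xleftarrow{\alpha_1}F_2\xleftarrow{\alpha_2}\cdots$ inductively, maintaining a dovetailed list of pending tasks: each \emph{universality task} asks that a fixed $G\in\mathcal F$ appear as an epimorphic image of some $F_n$, and each \emph{amalgamation task} consists of a stage $n$ together with epimorphisms $f\colon F_n\to G$ and $g\colon H\to G$ that must be resolved by diagram (D2). Universality tasks are handled with the joint projection property (property (3) of Definition~\ref{definition-Fraisse}), which supplies a common object mapping onto both $F_n$ and the target $G$; amalgamation tasks are handled with the projective amalgamation property (property (4)), applied to $f$ and $g$ to produce $F_{n+1}=D$ with $\alpha_n=f_0\colon D\to F_n$ and a witnessing $h=g_0\colon D\to H$ satisfying $f\circ\alpha_n=g\circ h$, i.e. the instance of (D2) with $m=n+1$. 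Because at each finite stage only countably many tasks are ever created and the bookkeeping schedules each one for eventual attention, the resulting sequence satisfies both conditions (1) and (2) of Definition~\ref{fund-seq-def}.

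For the identification, I would check the three properties of Theorem~\ref{limit} for $\mathbf F$. Property (1) follows from condition (1) of the \F sequence: any $A\in\mathcal F$ is an epimorphic image of some $F_n$, and precomposing with the projection $\alpha_n^\infty$ yields an epimorphism $\mathbf F\to A$; surjectivity on edges is where one argues, via a compactness (K\"onig-type) argument on the inverse system together with the coordinatewise definition of the edge relation on $\mathbf F$, that every edge of $A$ is hit. Property (3), refinement by $\varepsilon$-maps, is automatic for inverse limits of finite discrete graphs: the projections $\alpha_n^\infty$ separate points, so by compactness their fibers shrink below any prescribed $\varepsilon$. The crucial property (2) is where the extension condition of the \F sequence is used: given epimorphisms $f\colon\mathbf F\to A$ and $g\colon B\to A$, Lemma~\ref{ladder}(1) factors $f=\gamma\circ\alpha_n^\infty$ through some finite stage $\gamma\colon F_n\to A$; applying condition (2) of Definition~\ref{fund-seq-def} to $\gamma$ and $g$ yields $m>n$ and $h'\colon F_m\to B$ with $g\circ h'=\gamma\circ\alpha_n^m$, and then $h=h'\circ\alpha_m^\infty$ satisfies $g\circ h=f$, as required.

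I expect the main obstacle to be the bookkeeping in the construction stage: the amalgamation tasks reference the finite objects $F_n$, which are only determined as the sequence is built, so the enumeration must be organized so that every task created at any stage—over all $n$, all $G,H\in\mathcal F$, and all relevant epimorphisms $f$ and $g$—is eventually resolved at a later stage and no task is starved. A secondary technical point, less conceptual but needing care, is the edge-surjectivity of the induced maps on $\mathbf F$, where one must exploit that the edge relation on the inverse limit is defined coordinatewise and invoke compactness to lift an edge of a factor to an edge of $\mathbf F$.
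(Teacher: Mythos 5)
Your proposal is correct, and it is essentially the argument behind the result the paper simply cites: the paper gives no proof of Theorem~\ref{fund-sequence}, deferring to Theorem~2.4 of Irwin--Solecki, and your two-stage plan (dovetailed construction of a Fra\"{\i}ss\'e sequence from conditions (1), (3), (4) of Definition~\ref{definition-Fraisse}, then verification of the three clauses of Theorem~\ref{limit} for the inverse limit and an appeal to uniqueness) is exactly the standard proof underlying that citation. You also correctly flag the two places needing care --- scheduling amalgamation tasks that are only created as the sequence grows (resolving a task posted at stage $n$ at a later stage $m$ by amalgamating $f\circ\alpha_n^m$ with $g$), and the K\"onig-type argument for edge-surjectivity of the projections $\alpha_n^\infty$.
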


The following result is Proposition~2.3 of \cite{B-C}.
\begin{proposition}\label{B-CThm}
Let $\mathcal F$ be a projective \F family. Let $\{A_n,\alpha_n\}$ be an inverse sequence in $\mathcal F$. Assume that for each $A \in \mathcal F$, $n \in \mathbb N$, and epimorphism $f\colon A \to A_n$ in $\mathcal{F}$, there exists $m \ge n$ and an epimorphism $g\colon A_m \to A$ in $\mathcal{F}$ such that $fg=\alpha^m_n$. Then $\{A_n,\alpha_n\}$ is a Fra\"{\i}ss\'e sequence for 
$\mathcal F$. The converse holds as well.
\end{proposition}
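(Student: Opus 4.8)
The statement is an equivalence between the stated lifting condition and the two defining clauses of a \F sequence (Definition \ref{fund-seq-def}), so the plan is to prove the two implications in turn. The converse is immediate: assuming $\{A_n,\alpha_n\}$ is a \F sequence, fix $A\in\mathcal F$, $n\in\mathbb N$, and an epimorphism $f\colon A\to A_n$, and apply clause (2) of Definition \ref{fund-seq-def} with $A_n$ in the role of $G$, with $A$ in the role of $H$, with $f\colon A\to A_n$ in the role of the epimorphism $H\to G$, and with $\id_{A_n}\colon A_n\to A_n$ in the role of the epimorphism $F_n\to G$ (the identity is an epimorphism by clause (2) of Definition \ref{definition-Fraisse}). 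This produces $m>n$ and an epimorphism $h\colon A_m\to A$ with $f\circ h=\id_{A_n}\circ\alpha^m_n=\alpha^m_n$, so $g:=h$ witnesses the lifting condition.

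For the forward implication I would assume the lifting condition and verify the two clauses of Definition \ref{fund-seq-def} separately. To verify clause (1), fix $G\in\mathcal F$. Since $\mathcal F$ is a projective \F class it enjoys the joint projection property (clause (3) of Definition \ref{definition-Fraisse}), so there are $D\in\mathcal F$ and epimorphisms $p\colon D\to A_1$ and $q\colon D\to G$. Feeding $p\colon D\to A_1$ into the lifting condition yields $m\ge 1$ and an epimorphism $w\colon A_m\to D$ with $p\circ w=\alpha^m_1$; then $q\circ w\colon A_m\to G$ is an epimorphism, being a composition of epimorphisms, which is exactly what clause (1) demands.

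To verify clause (2), fix $n$, graphs $G,H\in\mathcal F$, and epimorphisms $g\colon H\to G$ and $f\colon A_n\to G$. Here I would invoke the projective amalgamation property (clause (4) of Definition \ref{definition-Fraisse}) on the pair $f\colon A_n\to G$ and $g\colon H\to G$ to obtain $D\in\mathcal F$ and epimorphisms $u\colon D\to A_n$ and $v\colon D\to H$ with $f\circ u=g\circ v$. Feeding $u\colon D\to A_n$ into the lifting condition gives $m\ge n$ and an epimorphism $w\colon A_m\to D$ with $u\circ w=\alpha^m_n$, and then $h:=v\circ w\colon A_m\to H$ satisfies $g\circ h=g\circ v\circ w=f\circ u\circ w=f\circ\alpha^m_n$, as required. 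Should the lifting condition only return $m=n$, I would replace $w$ by $w\circ\alpha_m$ and pass to index $m+1>n$; this preserves the relevant identity because $\alpha^m_n\circ\alpha_m=\alpha^{m+1}_n$, and so yields the strict inequality $m>n$ demanded by clause (2).

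All three verifications are routine diagram chases through the amalgamation diagram (D1) and the commuting square (D2), so I expect no deep obstacle. The one point calling for attention is that clause (1) genuinely needs an external input: the lifting condition by itself only lifts epimorphisms whose target is already some term $A_n$ and says nothing a priori about hitting a structure $G$ unrelated to the sequence, so the joint projection property of $\mathcal F$ must be brought in to manufacture a common $D$ mapping onto both $A_1$ and $G$.
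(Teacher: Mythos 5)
Your proof is correct. Note that the paper does not actually prove this proposition --- it simply cites it as Proposition~2.3 of Basso--Camerlo \cite{B-C} --- so there is no in-paper argument to diverge from; your two-directional diagram chase (identity map plus clause (2) of Definition \ref{fund-seq-def} for the converse; joint projection for clause (1) and amalgamation followed by the lifting hypothesis for clause (2) in the forward direction) is exactly the standard argument one would find in that reference. The one point worth flagging positively is that you noticed and correctly repaired the mismatch between the hypothesis's $m\ge n$ and the definition's $m>n$ by precomposing with $\alpha_m$.
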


\begin{definition}\label{definition-order}
A finite graph  $T$ is a {\it tree} if  for every two distinct vertices $a,b \in T$ there is a unique finite sequence
$v_0=a,v_1, \dots , v_n=b$ of vertices in $T$ such that for every $i\in \{0,1,\dots , n-1\}$ we have $\langle v_i,v_{i+1}\rangle\in E(T)$
and  $v_i\ne v_{j}$ if $i\neq j$.  
Let $n$ be a natural number, a vertex $p\in T$ has {\it order } $n$ ($\ord(p)=n$) if there are exactly $n$ non-degenerate edges in $T$ that contain $p$. If $\ord(p)=1$ then $p$ is an {\it end-vertex}, if $\ord(p)=2$ then $p$ is an {\it ordinary vertex}, and if $\ord(p)\ge 3$ then $p$ a {\it ramification vertex}.
\end{definition}

\begin{definition}
Given a topological graph ${\bf K}$, if the edge relation $E({\bf K})$ is also a transitive relation, i.e. it is an equivalence relation, then ${\bf K}$ is known as a {\it prespace}. The quotient space ${\bf K}/E({\bf K})$ is compact metrizable and is called the {\it topological realization} of ${\bf K}$ which we will denote by $|{\bf K}|$. Throughout we will use $\pi$ to denote the quotient map $\pi\colon  {\bf K} \to |{\bf K}|$.
\end{definition}

\begin{lemma}\label{quotient-emb}
    Suppose that ${\bf K}$ and $  {\bf L}$ are topological graphs that have transitive edge relations and suppose that $f\colon   {\bf K}\to   {\bf L}$ is an embedding of graphs. Let $\pi_{\bf K}\colon  {\bf K}\to | {\bf K}|$ and $\pi_{\bf L}\colon {\bf L}\to |{\bf L}|$ be the quotient maps. Then the map $f^*\colon |{\bf K}|\to |{\bf L}|$ satisfying $f^*\circ\pi_{\bf K}=\pi_{\bf L}\circ f$ is an embedding.
\end{lemma}
\begin{proof}
    Let $U$ be open in $|{\bf L}|$. Then $\pi_{\bf L}^{-1}(U)$ is open by the definition of the quotient topology. By continuity of $f$, 
    $f^{-1}(\pi_{\bf L}^{-1}(U))$ is open in ${\bf L}$.
    However, $f^{-1}(\pi_{\bf L}^{-1}(U))=\pi_{\bf K}^{-1}((f^*)^{-1}(U))$. Therefore again by the definition of the quotient topology, $(f^*)^{-1}(U)$ is \blu{open in $|{\bf K}|$}.
\end{proof}

The following is Theorem 2.17 of \cite{WJC-RPR-Fraisse}. It  allows us to show, in some cases,  transitivity of the edge relation, and thus the existence of the topological realization.  
\begin{theorem}\label{transitive-edges}
Suppose that $\mathcal F$ is a projective \F family of graphs and for every $G\in \mathcal F$, for all pairwise different $a,b,c\in G$ such that 
$\langle a,b\rangle\in E(G)$ and $\langle b,c\rangle\in E(G)$ there is a graph $H$ and an epimorphism 
$f^H_G\colon H\to G$ in $\mathcal{F}$ such that for all vertices $p,q,r\in H$ such that $f^H_G(p)=a$, $f^H_G(q)=b$, and
$f^H_G(r)=c$ we have $\langle p,q\rangle\notin E(H)$ or $\langle q,r\rangle\notin E(H)$. Then the edge relation of the projective \Fl $\mathbb F$ of $\mathcal F$ is transitive.
Moreover, for any ${\bf a}\in \mathbb F$ there is at most one  ${\bf b}\in \mathbb F$, ${\bf b}\neq {\bf a}$, such that $\langle {\bf a},{\bf b}\rangle\in E(\mathbb F)$. 
\end{theorem}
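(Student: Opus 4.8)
The plan is to prove the statement through a slightly stronger configurational fact: rather than starting from two edges $\langle \mathbf x,\mathbf y\rangle,\langle \mathbf y,\mathbf z\rangle\in E(\mathbb F)$ and manufacturing the edge $\langle \mathbf x,\mathbf z\rangle$, I would show that $\mathbb F$ contains no genuine path of length two on three distinct vertices, i.e. there are no pairwise distinct $\mathbf x,\mathbf y,\mathbf z\in V(\mathbb F)$ with $\langle \mathbf x,\mathbf y\rangle,\langle \mathbf y,\mathbf z\rangle\in E(\mathbb F)$. This suffices for transitivity: if $\langle \mathbf x,\mathbf y\rangle,\langle \mathbf y,\mathbf z\rangle\in E(\mathbb F)$ and the three points are not pairwise distinct, then one of the coincidences $\mathbf x=\mathbf y$, $\mathbf y=\mathbf z$, $\mathbf x=\mathbf z$ holds, and in each case $\langle \mathbf x,\mathbf z\rangle\in E(\mathbb F)$ follows immediately (using reflexivity in the last case).

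So I would suppose toward a contradiction that $\mathbf x,\mathbf y,\mathbf z$ are pairwise distinct with both edges present, and let $\delta>0$ be the minimum of the three pairwise distances in the metric on $V(\mathbb F)$. First I would pass to a finite approximation that separates the three points. By Theorem~\ref{limit}(\ref{refinement}) there is $G\in\mathcal F$ and an epimorphism $\phi\colon\mathbb F\to G$ that is an $\varepsilon$-map for some $\varepsilon<\delta$. Since the fibers of $\phi$ then have diameter less than $\delta$, the images $a:=\phi(\mathbf x)$, $b:=\phi(\mathbf y)$, $c:=\phi(\mathbf z)$ are pairwise distinct; and because $\phi$ is a homomorphism, $\langle a,b\rangle,\langle b,c\rangle\in E(G)$. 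Thus $a,b,c$ is exactly the kind of configuration to which the hypothesis applies.

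Next I would invoke the separating cover and lift the path. Applying the hypothesis to $G$ and the triple $a,b,c$ yields $H\in\mathcal F$ and an epimorphism $f^H_G\colon H\to G$ with the stated separation property. Now apply the defining lifting property of $\mathbb F$, Theorem~\ref{limit}(2), to the epimorphisms $\phi\colon\mathbb F\to G$ and $f^H_G\colon H\to G$: there is an epimorphism $h\colon\mathbb F\to H$ with $\phi=f^H_G\circ h$. Put $p:=h(\mathbf x)$, $q:=h(\mathbf y)$, $r:=h(\mathbf z)$. Then $f^H_G(p)=\phi(\mathbf x)=a$, and likewise $f^H_G(q)=b$, $f^H_G(r)=c$, while $h$ being a homomorphism gives $\langle p,q\rangle,\langle q,r\rangle\in E(H)$. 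This is precisely the situation that the separation property of $H$ forbids, so we reach a contradiction and the claim, hence transitivity, follows.

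I expect no serious analytic obstacle; the argument is a direct exploitation of the lifting property (Theorem~\ref{limit}(2)) together with the refinement property (Theorem~\ref{limit}(\ref{refinement})). The two points that need care are (i) the reduction, namely recognizing that it is enough to rule out paths of length two on three distinct vertices and that this in fact forces the edge-equivalence classes to have at most two elements, and (ii) the verification that the chosen approximation genuinely separates $\mathbf x,\mathbf y,\mathbf z$ into distinct vertices $a,b,c$, which is what licenses the use of the hypothesis. A more bookkeeping alternative would replace the $\varepsilon$-map step by working in a concrete \F sequence $\{F_n,\alpha_n\}$ with $\mathbb F\cong\iLim\{F_n,\alpha_n\}$ (Theorem~\ref{fund-sequence}): there one picks a level $N$ at which $x_N,y_N,z_N$ are already pairwise distinct (possible since coordinates of distinct inverse-limit points remain distinct at all higher levels) and runs the same lifting argument with $\phi=\alpha^\infty_N$.
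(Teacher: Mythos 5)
Your argument is correct. One caveat about the comparison: this paper does not actually prove Theorem~\ref{transitive-edges} --- it imports it as Theorem~2.18 of \cite{WJC-RPR-Fraisse} --- so the only check available here is against the way the result is used, and your proof matches that exactly: it is the standard argument one expects (and the one the cited preprint gives). Your reduction to ruling out paths of length two on three \emph{distinct} vertices is the right move, the $\varepsilon$-map from Theorem~\ref{limit}(\ref{refinement}) with $\varepsilon$ below the minimum pairwise distance correctly forces $a,b,c$ to be distinct, and the lift via Theorem~\ref{limit}(2) produces the forbidden configuration in $H$. The only point worth making explicit is that the hypothesis must be read as asserting $H\in\mathcal F$ with $f^H_G$ an epimorphism of the fixed class (the statement literally says only ``a graph $H$''), since otherwise Theorem~\ref{limit}(2) does not apply; you assume this silently, and it is indeed how the paper uses the theorem when it verifies the hypothesis for $\mathcal G$.
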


Below we give definitions for connectedness properties for topological graphs analogous to similarly named properties for continua.  The first definition below originally appeared in \cite{Pseudo} and the remainder were introduced in \cite{WJC-RPR-Fraisse}.

\begin{definition}
  Given a topological graph ${\bf G}$, a subgraph ${\bf S}$ of ${\bf G}$ is {\it disconnected} if there are two nonempty disjoint closed subgraphs ${\bf P}$ and ${\bf Q}$ of ${\bf S}$ such that ${\bf P}\cup {\bf Q}={\bf S}$ and if
  ${\bf a}\in {\bf P}$ and ${\bf b}\in {\bf Q}$, then $\langle {\bf a},{\bf b}\rangle\notin E({\bf G})$. \blu{In this case we say the graphs ${\bf P}$ and ${\bf Q}$ form a {\it separation} of ${\bf S}$ and denote it by ${\bf S} = {\bf P}|{\bf Q}$.} A  subgraph ${\bf S}$ of ${\bf G}$ is {\it connected} if it is not disconnected.
 \end{definition}

\begin{observation}\label{quotient connected}
    \blu{A topological graph ${\bf G}$
is connected if and only if its topological realization $|{\bf G}|$
is connected. Moreover, if $ K\subseteq |{\bf G}|$ is connected, then $\pi^{-1}(K)$ is connected. If ${\bf Q}\subseteq {\bf G}$ is connected, then $\pi({\bf Q})$ is connected.}
\end{observation}

\begin{definition} 
Given a topological graph ${\bf G}$, a subgraph ${\bf S}$ of ${\bf G}$, and a vertex ${\bf a}\in {\bf S}$, the {\it component of ${\bf S}$ containing ${\bf a}$} is the
largest connected  subgraph ${\bf C}$ of ${\bf S}$ that contains ${\bf a}$; in other words ${\bf C}=\bigcup\{{\bf P}\subseteq {\bf S}\colon {\bf a}\in {\bf P}\ \text{ and }{\bf P} \text{ is connected} \} $. Note that if ${\bf S}$ is a topological graph then every component of ${\bf S}$ is closed.
\end{definition}

\begin{definition}
 Let $F$ be a graph, $A$ and $B$ be subgraphs of $ F$ and $a,b\in F$.
 We say that $A$ is {\it adjacent} to $B$ if $A \cap B = \emptyset$ and there are $x\in A$ and $y \in B$ such that 
 $\langle  x,  y \rangle \in E( F)$. 
 We say that $ a$ is {\it adjacent} to $ A$ if $\{ a\}$ is adjacent to $ A$,  and similarly, $ a$ is {\it adjacent} to $ b$ if $\{ a\}$ is adjacent to $\{ b\}$.
\end{definition}

\blu{The following definition of an arc in the setting of topological graphs corresponds to the topological characterization of an arc being a continuum that contains exactly two non-cut points, see Nadler~\cite[Theorem 6.17]{Nadler-intro}, though we additionally allow a single vertex to be an arc.}

\begin{definition}\label{arc-def}
 \blu{ We say that a topological graph ${\bf G}$ is an {\it arc} if $ {\bf G}$ is a single vertex or two vertices joined by an edge, or else $\bf G$ is connected and there are exactly two vertices ${\bf a},{\bf b}\in {\bf G}$ such that for every
  ${\bf x}\in {\bf G}\setminus \{{\bf a},{\bf b}\}$ we have  ${\bf G}\setminus \{{\bf x}\}={\bf C}\cup {\bf D}$ where ${\bf C},{\bf D}$ are nonempty and closed in ${\bf G}\setminus \{{\bf x}\}$, ${\bf a}\in {\bf C}$, ${\bf b}\in {\bf D}$, and  $\langle {\bf t}, {\bf s}\rangle\notin E({\bf G})$ whenever ${\bf t}\in {\bf C}$ and ${\bf s}\in {\bf D}$.
  } The vertices ${\bf a}$ and ${\bf b}$ are called {\it end-vertices} of the arc and we say that $\bf G$  {\it joins} ${\bf a}$ and ${\bf b}$. 
\end{definition}

\begin{definition}
  A topological graph ${\bf G}$ is called {\it arcwise connected} if for every two vertices ${\bf a},{\bf b}\in {\bf G}$ there is a subgraph of ${\bf G}$ that is an arc and contains ${\bf a}$ and ${\bf b}$. If ${\bf p}  \in {\bf G}$ then the {\em arc-component of ${\bf G}$ at ${\bf p}$} is $\bigcup\{{\bf A} \colon {\bf A} \text{ is an arc in } {\bf G} \text { and } {\bf p} \in {\bf A}\} $.
  \end{definition}

\begin{definition}
A {\em path} in a finite graph $A$ is a sequence $p_0, p_1, p_2,\ldots, p_n$  of vertices in $A$ such that $n>0$ and for every $0\leq i<n$ it holds 
$\langle p_i, p_{i+1}\rangle \in E(A)$. 
Note that a path is exactly the image by a homomorphism of a finite arc.
\end{definition}

\begin{definition}\label{her-uni}

 A topological graph ${\bf G}$ is {\it hereditarily unicoherent} if for every two nonempty closed connected topological graphs, ${\bf P}$ and ${\bf Q}$, with $V({\bf P}) \subseteq V({\bf G})$, $V({\bf Q}) \subseteq V({\bf G})$, $E({\bf P}) \subseteq E({\bf G})$, and $E({\bf Q}) \subseteq E({\bf G})$, the intersection ${\bf P}\cap {\bf Q}=(V({\bf P})\cap V({\bf Q}), E({\bf P})\cap E({\bf Q}))$ is connected. 

 \end{definition}

Note that if ${\bf P}$ and ${\bf Q}$  in Definition \ref{her-uni} were taken to be subgraphs of ${\bf G}$ then some graphs, for example, complete graphs, would be hereditarily unicoherent, which seems counter-intuitive. 
Nevertheless, in Section~\ref{sec-uni}, it will suffice to work with the following weaker version of hereditary unicoherence.

\begin{definition}
    
 A topological graph ${\bf G}$ is {\it subgraph hereditarily unicoherent} if for every 
two nonempty closed connected subgraphs ${\bf P}$ and ${\bf Q}$ of $\mathbb F$  the intersection ${\bf P}\cap {\bf Q}$ is connected. 

 \end{definition}
  Complete graphs are not hereditarily unicoherent but they are subgraph hereditarily unicoherent.

\begin{definition}\label{dendroid-def}
  A hereditarily unicoherent and arcwise connected topological graph is called a {\it dendroid}.
\end{definition}

\begin{definition}\label{order-def} 

By a {\it rooted graph} we mean a topological graph ${\bf G}$ with a distinguished vertex $r({\bf G})$, called the {\it root}. 
On a rooted graph
we define an order $\le$ by ${\bf x}\le {\bf y}$ if every arc containing $r({\bf G})$ and ${\bf y}$ contains~${\bf x}$. 
\end{definition}

\begin{definition}\label{def:fan}
A rooted dendroid ${\bf T}$ is a {\it fan} if for every ${\bf s},{\bf t}\in {\bf T}$ which are incomparable in the order from Definition \ref{order-def} , if ${\bf p}\not \in \{{\bf s},{\bf t}\}$ is such that 
${\bf p}\leq {\bf s}$ and ${\bf p}\leq {\bf t}$, then ${\bf p}$ is the root of ${\bf T}$. 
\end{definition}

\begin{definition}
A rooted topological graph ${\bf X}$ with the root $r({\bf X})$ is called a {\it smooth dendroid} if ${\bf X}$ is a dendroid 
and the order $\le$, defined in \ref{order-def}, 
 is closed. A smooth dendroid which is a fan is called a {\it smooth fan}.
\end{definition}

\section{Confluent epimorphisms}

Let us start with recalling the following definition introduced in \cite{WJC-RPR-Fraisse}.

\begin{definition}
Given two topological graphs ${\bf G}$ and ${\bf H}$, an epimorphism $f\colon  {\bf G}\to {\bf H}$  is called {\it confluent} if for every closed connected subset ${\bf Q}$ of ${\bf H}$ and every component ${\bf C}$ of $f^{-1}({\bf Q})$ we have $f({\bf C})={\bf Q}$. Equivalently, if for every closed connected subset ${\bf Q}$ of ${\bf H}$ and every vertex ${\bf a}\in {\bf G}$ such that $f({\bf a})\in {\bf Q}$, there is a connected set ${\bf C}$ of ${\bf G}$ such that ${\bf a}\in {\bf C}$ and $f({\bf C})={\bf Q}$.
It is called {\it monotone} if preimages of closed connected sets are connected.
Clearly, every monotone epimorphism is confluent.
\end{definition}

\begin{observation}
If $f\colon  {\bf X} \to {\bf Y}$ and $g\colon  {\bf Y} \to {\bf Z}$ are confluent epimorphisms between topological graphs, then $g\circ f\colon {\bf X} \to {\bf Z}$ is a confluent epimorphism.
\end{observation}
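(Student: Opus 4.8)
The plan is to use the equivalent, pointwise formulation of confluence stated in the definition, which turns the claim into a short two-step lifting argument. First I would record that $g\circ f$ is itself an epimorphism: the composition of homomorphisms maps edges to edges, and surjectivity on both vertices and edges is preserved under composition. For edges this needs a word of care: given $\langle z_1,z_2\rangle\in E(Z)$, lift it first through $g$ to an edge $\langle y_1,y_2\rangle\in E(Y)$ with $g(y_i)=z_i$, and then through $f$ to an edge $\langle x_1,x_2\rangle\in E(X)$ with $f(x_i)=y_i$, so that $(g\circ f)(x_i)=z_i$. Thus $g\circ f$ is an epimorphism, and it remains only to verify the confluence condition.

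To that end I would fix a connected subset $Q\subseteq V(Z)$ together with a vertex $a\in V(X)$ satisfying $(g\circ f)(a)=g(f(a))\in Q$, and produce a connected set $C$ through $a$ with $(g\circ f)(C)=Q$. Since $g$ is confluent and $Q$ is connected, applying the equivalent formulation to the vertex $f(a)\in V(Y)$, which satisfies $g(f(a))\in Q$, yields a connected set $P\subseteq V(Y)$ with $f(a)\in P$ and $g(P)=Q$. Now $P$ is a connected subset of $V(Y)$ and $a$ satisfies $f(a)\in P$, so confluence of $f$ applied to $P$ and $a$ produces a connected set $C\subseteq V(X)$ with $a\in C$ and $f(C)=P$. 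Then $C$ is connected, $a\in C$, and $(g\circ f)(C)=g(f(C))=g(P)=Q$, which is precisely the confluence condition for $g\circ f$.

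Since the proof is just a chaining of two applications of the lifting property, I do not expect a genuine obstacle. The only points that require attention are confirming that surjectivity on \emph{edges}, not merely on vertices, survives the composition, and applying the pointwise characterization in the correct order, namely lifting through $g$ before $f$, so that the intermediate connected set $P$ lands in $V(Y)$ where the two maps can be composed. One could equally run the argument through the component-based form of the definition, but the pointwise formulation keeps the bookkeeping cleanest.
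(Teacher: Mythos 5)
Your proof is correct. The paper records this as an Observation with no proof at all, and your two-step lifting argument via the pointwise formulation of confluence (lift $Q$ through $g$ at $f(a)$ to get $P$, then lift $P$ through $f$ at $a$) is precisely the routine verification the authors leave to the reader.
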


Let us recall some results shown in \cite{WJC-RPR-Fraisse}

\begin{theorem}\label{confluent-edges}
 Given two finite graphs $G$ and $H$, the following conditions are equivalent for an epimorphism $f\colon G\to H$:
\begin{enumerate}
  \item $f$ is  confluent;
  \item for every edge $P\in  E(H)$ and for every vertex $a\in G$ such that $f(a)\in P$, there is an edge $E\in E(G)$ and a connected set $R\subseteq G$ such that  $E\cap R\ne \emptyset$, $f(E)=P$, $a\in R$, and $f(R)=\{f(a)\}$;
  \item  for every edge $P\in  E(H)$ and every component $C$ of $f^{-1}(P)$ there is an edge $E$ in $C$ such that $f(E)=P$.
\end{enumerate}
\end{theorem}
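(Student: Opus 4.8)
The plan is to prove the three conditions equivalent cyclically, establishing $(1)\Rightarrow(2)\Rightarrow(3)\Rightarrow(1)$. Throughout I would use two elementary facts: since $G$ is finite, every subset of $V(G)$ is closed, so ``connected'' has its purely combinatorial meaning; and a connected subset of $f^{-1}(Q)$ that meets a component $C$ of $f^{-1}(Q)$ is entirely contained in $C$, because components are maximal connected sets.

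For $(1)\Rightarrow(2)$, fix an edge $P=\langle u,v\rangle\in E(H)$ and a vertex $a\in V(G)$ with $f(a)\in P$; say $f(a)=u$. We may assume $u\neq v$, since if $P$ is the reflexive edge $\langle u,u\rangle$ one simply takes $R=\{a\}$ and $E=\langle a,a\rangle$. Now $\{u,v\}$ is a connected subset of $V(H)$, so I would let $C$ be the component of $f^{-1}(P)$ containing $a$ and invoke confluence to get $f(C)=P=\{u,v\}$. Partition $C$ into the $u$-vertices $C_u$ and the $v$-vertices $C_v$, both nonempty, and take $R$ to be the component of $a$ in the subgraph induced on $C_u$; then $R$ is connected, $a\in R$, and $f(R)=\{u\}=\{f(a)\}$. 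The key point is to produce the required edge $E$: if no vertex of $R$ were adjacent to a $v$-vertex, then, since $R$ is a whole component of $C_u$, there would be no edge from $R$ to $C\setminus R$, disconnecting $C$ and contradicting its connectedness. Hence some $p\in R$ is adjacent to some $q\in C_v$, and $E=\langle p,q\rangle$ satisfies $f(E)=P$ and $E\cap R\ni p$.

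For $(2)\Rightarrow(3)$, fix $P\in E(H)$ and a component $C$ of $f^{-1}(P)$, choose any $a\in C$, and apply $(2)$ to get an edge $E$ and a connected set $R$ with $a\in R$, $f(R)=\{f(a)\}$, $f(E)=P$, and $E\cap R\ne\emptyset$. Since $R$ is connected, contained in $f^{-1}(P)$, and meets $C$ at $a$, I get $R\subseteq C$; then $E$ meets $C$, is contained in $f^{-1}(P)$, and is connected, so $E\subseteq C$, which is exactly the edge required by $(3)$.

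The substance of the theorem is $(3)\Rightarrow(1)$, where one must bootstrap edge-level confluence to arbitrary connected sets. Fix a connected $Q\subseteq V(H)$ and a component $C$ of $f^{-1}(Q)$; clearly $\emptyset\ne f(C)\subseteq Q$, and it remains to prove $f(C)\supseteq Q$. I would first show that $f(C)$ is closed under passing to $Q$-neighbors: if $u\in f(C)$ and $w\in Q$ with $\langle u,w\rangle\in E(H)$, pick $p\in C$ with $f(p)=u$, let $C'$ be the component of $f^{-1}(\{u,w\})$ containing $p$, and apply $(3)$ to obtain an edge in $C'$ mapping onto $\{u,w\}$, hence a vertex $q'\in C'$ with $f(q')=w$. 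Because $C'$ is a connected subset of $f^{-1}(Q)$ meeting $C$, we have $C'\subseteq C$, so $w=f(q')\in f(C)$. Finally, if $f(C)\ne Q$, then $Q=f(C)\cup(Q\setminus f(C))$ exhibits $Q$ as a union of two nonempty sets with, by the neighbor-closure just proved, no edge between them, contradicting the connectedness of $Q$. The main obstacle is precisely this last implication: condition $(3)$ controls only edges, so the argument must convert the combinatorial connectedness of $Q$ into a propagation statement, and the technical heart is the repeated containment $C'\subseteq C$ that lets an edge found inside the small preimage $f^{-1}(\{u,w\})$ be recognized inside the large component $C$.
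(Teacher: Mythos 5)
The paper does not prove this theorem: it is stated as a recalled result from the trees paper \cite{WJC-RPR-Fraisse}, so there is no in-text argument to compare against. Your cyclic proof $(1)\Rightarrow(2)\Rightarrow(3)\Rightarrow(1)$ is correct and complete: the degenerate edge $\langle u,u\rangle$ is handled, the production of the edge $E$ in $(1)\Rightarrow(2)$ correctly uses that $R$ is a full component of $C_u$ together with the connectedness of $C$, the containments $R\subseteq C$ and $E\subseteq C$ in $(2)\Rightarrow(3)$ follow from maximality of components, and the neighbor-closure argument in $(3)\Rightarrow(1)$ is exactly the right way to upgrade edge-level confluence to arbitrary connected $Q$ in a finite graph, where all subsets are closed and connectedness is purely combinatorial.
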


The following two propositions will be needed later.
\begin{proposition}[\protect{\cite[Proposition 4.5]{WJC-RPR-Fraisse}}]\label{inverselim}
Let $\{F_n,\alpha_n\}$ be an inverse sequence of finite graphs, where each $\alpha_n$ is a confluent epimorphism. Let ${\bf F}$ denote its inverse limit. Then for each $n$, the map $\alpha^\infty_n\colon {\bf F}\to F_n$ is a confluent epimorphism.
\end{proposition}

\begin{proposition}[\protect{\cite[Proposition 4.4]{WJC-RPR-Fraisse}}]\label{threemaps}
 Consider the epimorphisms $f\colon {\bf F}\to {\bf G}$ and $g\colon {\bf G}\to {\bf H}$ between topological graphs. If the composition 
$g\circ f\colon {\bf F}\to {\bf H}$ is confluent, then $g$ is confluent.

\end{proposition}

\begin{corollary}\label{uniqueness}
Let $\mathcal G$ be the family of nonempty
finite connected graphs with confluent epimorphisms.
Then  \blu{$\mathcal{G}$} is consistent.

Moreover, epimorphisms in $\mathcal{G}^\omega$ are exactly confluent epimorphisms.
\end{corollary}
\begin{proof}
Let ${\bf F}=\iLim\{F_n,\alpha_n\}=\iLim \{G_n, \beta_n\}$.
It suffices to check that maps $g_i$ and $h_i$ constructed in Lemma \ref{ladder} are in $\mathcal G$. 
By our assumption, if $\alpha^{n_{i+1}}_{n_i}=h_ig_{i+1}$,
then since $\alpha^{n_{i+1}}_{n_i}$ is in $\mathcal G$, we get by Proposition \ref{threemaps} that $h_i$ is in $\mathcal G$. Similarly, if 
$\beta^{m_{i+1}}_{m_i}=g_ih_i$, then since $\beta^{m_{i+1}}_{m_i}$ is in  $\mathcal G$, we have that $g_i$ is in $\mathcal G$.
This shows that \blu{$\mathcal{G}$} is consistent.

The moreover part follows from Proposition \ref{inverselim}.
\end{proof}

The following proposition is an analog to \cite[I, p. 213]{JJC-Confluent}. 

\begin{proposition}[\protect{\cite[Proposition 4.11]{WJC-RPR-Fraisse}}]\label{confluent-restrictions}
Let $f\colon {\bf F}\to {\bf G}$ be a confluent epimorphism between topological graphs, let ${\bf Q}$ be a closed connected subgraph   of  ${\bf G}$, and let ${\bf C}$
be a component of $f^{-1}({\bf Q})$. Then the restriction 
$f|_{\bf C}$ is confluent. 
\end{proposition}

It is not the case, as the following example shows, that if $f\colon G \to H$ is a confluent epimorphism and $K$ is a subgraph of $G$ then $f|_K\colon K \to H$ must be confluent.

\begin{example}
Let $K$ consist of the vertices $a_1,c_1,a_2,d_1,a_3,b_1$ and edges $\langle a_1,c_1\rangle, \langle c_1,a_2\rangle, \langle a_2,d_1\rangle, \langle d_1,a_3\rangle,\langle a_3,b_1\rangle,\langle b_1,a_1\rangle$. Let $G$ be the graph $K$ with the additional vertices $b_2,c_2,d_2$ and edges $\langle b_2,a_2\rangle,$ $\langle b_2,b_1\rangle,$ $\langle c_2,a_3\rangle,$ $\langle c_2,c_1\rangle,$ $\langle d_2,a_1\rangle,$ $\langle d_2,d_1\rangle$. Finally, let $H$ be the triod with vertices $A,B,C,D$ and edges $\langle A,B\rangle, \langle A,C\rangle,\langle A,D\rangle$. Define $f\colon G \to H$ by $f(a_i)=A$, $f(b_i)=B$, $f(c_i)=C$, $f(d_i)=D$.  One can use Theorem~\ref{confluent-edges} to check that $f\colon G \to H$ is confluent but $(f|_K)^{-1}(\langle A,B\rangle)$ contains the component $\{a_2\}$ so condition (3) of Theorem \ref{confluent-edges} is not satisfied by $f|_K\colon K \to H$.
\end{example}

The construction below was introduced in \cite{WJC-RPR-Fraisse}.
 
\begin{definition}
For given graphs $A,B,C$ and epimorphisms $f\colon B\to A$, $g\colon C\to A$ by {\it standard amalgamation procedure} we mean the graph $D$ defined by $V(D)=\{( b,c)\in {V}(B)\times {V}(C)\colon f(b)=g(c)\}$; $E(D)=\{\langle ( b,c), (b',c') \rangle\colon \langle b,b'\rangle\in E(B)  \text{ and } \langle c,c'\rangle\in E(C)\}$; and
$f_0(( b,c))=b$, $g_0(( b,c))=c$. Note that $f_0$ and $g_0$ are epimorphisms and $f\circ f_0=g\circ g_0$.
\end{definition}

\begin{theorem} 
The family of nonempty finite connected graphs with confluent epimorphisms is a projective \F family.
\end{theorem}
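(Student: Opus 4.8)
The plan is to verify the four conditions of Definition~\ref{definition-Fraisse} for the class $\mathcal G$ of finite connected graphs with confluent epimorphisms, using the \emph{standard amalgamation procedure} (the fibre product) as the candidate amalgam. Conditions (1) and (2) are routine: there are only countably many finite graphs up to isomorphism; the identity map is monotone, hence confluent; and confluence is preserved under composition by the observation recorded above. Condition (3) is the special case of condition (4) in which $A$ is the one-vertex graph, onto which every connected graph maps by the (monotone, hence confluent) constant epimorphism; so it suffices to establish the projective amalgamation property (4).

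For (4), given confluent epimorphisms $f\colon B\to A$ and $g\colon C\to A$ between finite connected graphs, I would form the standard amalgamation $D$ with $V(D)=\{(b,c):f(b)=g(c)\}$ and projections $f_0,g_0$, which already satisfy $f\circ f_0=g\circ g_0$. The technical heart of the argument is to show that $f_0\colon D\to B$ (and symmetrically $g_0\colon D\to C$) is \emph{confluent}, and I would do this by verifying condition~(2) of Theorem~\ref{confluent-edges}. Given an edge $P=\langle b,b'\rangle\in E(B)$ and a vertex $(b,c_*)\in V(D)$, so that $g(c_*)=f(b)$, I would apply confluence of $g$ (again via Theorem~\ref{confluent-edges}(2)) to the edge $\langle f(b),f(b')\rangle\in E(A)$ and the vertex $c_*$: this yields an edge $E'\in E(C)$ and a connected set $R'\subseteq V(C)$ with $c_*\in R'$, $g(R')=\{f(b)\}$, $g(E')=\langle f(b),f(b')\rangle$, and $E'\cap R'\neq\emptyset$. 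Setting $R=\{b\}\times R'$ gives a connected subset of $V(D)$ with $f_0(R)=\{b\}$ and $(b,c_*)\in R$, while the endpoints of $E'$ lift to an edge $E\in E(D)$ with $f_0(E)=P$ meeting $R$; the degenerate case $f(b)=f(b')$ is handled directly by the edge $\langle(b,c_*),(b',c_*)\rangle$. This is exactly what condition~(2) demands, so $f_0$ is a confluent epimorphism.

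The remaining obstacle, and the one I expect to be the genuinely delicate point, is that the fibre product $D$ need not be connected, whereas $\mathcal G$ consists of connected graphs. I would resolve this by passing to any component $D_0$ of $V(D)$. Since $B$ is connected, $D_0$ is a component of $f_0^{-1}(V(B))=V(D)$, so Theorem~\ref{confluent-restrictions} shows that $f_0|_{D_0}$ is confluent, and the global confluence of $f_0$ forces $f_0(D_0)=V(B)$, i.e. $f_0|_{D_0}\colon D_0\to B$ is onto; the same applies to $g_0|_{D_0}\colon D_0\to C$. Thus $D_0\in\mathcal G$, together with the confluent epimorphisms $f_0|_{D_0}$ and $g_0|_{D_0}$, satisfies $f\circ(f_0|_{D_0})=g\circ(g_0|_{D_0})$, completing the amalgamation and hence the proof. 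The crux is therefore the twofold use of the confluence characterisation: once to lift edges through the fibre product, and once (via Theorem~\ref{confluent-restrictions}) to survive the restriction to a single component.
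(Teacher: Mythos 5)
Your proposal is correct and follows essentially the same route as the paper: conditions (1) and (2) are routine, condition (3) is reduced to condition (4) via the one-vertex graph, and condition (4) is handled by the standard amalgamation procedure. The only difference is that the paper delegates the confluence of the fibre-product projections (and the passage to a connected component) to \cite[Corollary 4.11]{WJC-RPR-Fraisse}, whereas you prove that step directly via Theorem~\ref{confluent-edges}(2) and Theorem~\ref{confluent-restrictions}; your verification is sound.
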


\begin{proof}
We need to see that Definition \ref{definition-Fraisse} holds for this family. Conditions (1) and (2) are easy to check.  

Condition (4) was proved in \cite{WJC-RPR-Fraisse}. We repeat that proof for reader's convenience. We show that a component of the graph obtained by the standard amalgamation procedure is as required. Let $f\colon B\to A$ and $g\colon C\to A$ be confluent epimorphisms. Let $f_0\colon D_0\to B$ and $g_0\colon D_0\to C$ be as in the standard amalgamation procedure.

We show that $g_0$ is confluent.
Let $\langle c,c'\rangle \in E(C)$ and $(b,c)\in D_0$. By Theorem \ref{confluent-edges} we need to find an edge 
$\langle d_1,d_2\rangle \in E(D_0)$ that is mapped by $g_0$ onto  $\langle c,c'\rangle$ and a connected set $R$ such that 
$(b,c), d_1\in R$, and $g_0(R)=\{c\}$. 
In case $g(c)=g(c')$, we have $(b,c')\in D_0$, therefore we can simply take $d_1=(b,c)$, $R=\{(b,c)\}$, and $d_2=(b,c')$. Suppose now that $g(c)\neq g(c')$. 
Since $f$ is confluent, there are vertices $b_1,b_2\in B$ and a connected set $R_B$ such that $\langle b_1,b_2\rangle\in E(B)$, $b,b_1\in R_B$,  $f(\langle b_1,b_2\rangle)=\langle g(c),g(c')\rangle$, and $f(R_B)=\{f(b)\}$. It is enough to put $d_1=( b_1,c)$, $d_2=( b_2,c')$, and $R=R_B\times \{c\}$. 
Similarly, $f_0$ is confluent. 
Let $D$ be a component of $D_0$. Then $D$, $f_0|_D$ and $g_0|_D$, by Proposition \ref{confluent-restrictions}, are as required.

Finally, condition~(3) follows from 
condition~(4) by noting that given finite connected graphs $B$ and $C$ if we let $A$ be the graph consisting of a single vertex and $f\colon B \to A$, $g\colon C\to A$ be constant maps, which are confluent, then condition~(4) gives the existence of a connected graph $D$ and the required confluent epimorphisms from $D$ onto $B$ and $C$.

\end{proof}

\begin{theorem}
If $\mathcal G$ is the family of nonempty finite connected graphs with confluent epimorphisms and $\mathbb G$ is the projective \F limit of $\mathcal G$ then the edge relation $E(\mathbb G)$ is transitive.
\end{theorem}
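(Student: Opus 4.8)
The plan is to verify the hypothesis of Theorem~\ref{transitive-edges} for the class $\mathcal G$. Thus I fix a finite connected graph $G\in\mathcal G$ together with pairwise distinct vertices $a,b,c\in V(G)$ satisfying $\langle a,b\rangle,\langle b,c\rangle\in E(G)$, and I must produce a finite connected graph $H$ and a confluent epimorphism $f\colon H\to G$ with the property that no vertex of $f^{-1}(b)$ is simultaneously adjacent to a vertex of $f^{-1}(a)$ and to a vertex of $f^{-1}(c)$. My construction simply \emph{splits the vertex $b$}. Let $V(H)=V(G)\cup\{b^*\}$ where $b^*$ is a fresh vertex, and let $f$ be the identity on $V(G)$ with $f(b^*)=b$. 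For the edges I retain every edge of $G$ except $\langle b,c\rangle$ (and its symmetrization) and I add the two edges $\langle b^*,b\rangle$ and $\langle b^*,c\rangle$. The idea is that the two edges meeting at $b$ are now routed through different preimages of $b$: the $a$-side keeps using $b$, while the $c$-side is forced through $b^*$.

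Granting the construction, I would verify the three required properties. Connectivity of $H$ is clear, since $G$ is connected, so deleting the single edge $\langle b,c\rangle$ leaves it connected or splits it into exactly two pieces, one containing $b$ and one containing $c$; the added edges $\langle b^*,b\rangle$ and $\langle b^*,c\rangle$ then reconnect those pieces. Surjectivity on vertices is immediate, and surjectivity on edges follows because $\langle b^*,c\rangle$ maps onto $\langle b,c\rangle$ while every other edge of $G$ is retained and maps onto itself. The separation property is pure edge bookkeeping: since $a\ne b$ and $c\ne b$ we have $f^{-1}(a)=\{a\}$, $f^{-1}(c)=\{c\}$, and $f^{-1}(b)=\{b,b^*\}$, so a witnessing triple would have to be $a,q,c$ with $q\in\{b,b^*\}$; but for $q=b$ the edge $\langle b,c\rangle$ is absent, and for $q=b^*$ the edge $\langle a,b^*\rangle$ is absent, so one of the two required edges always fails to exist.

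The only nontrivial step, and the one I expect to be the main obstacle, is confluence of $f$, which I would establish through condition~(3) of Theorem~\ref{confluent-edges}: for every edge $P\in E(G)$ and every component $C$ of $f^{-1}(P)$ there must be an edge of $C$ mapping onto $P$. Only edges incident to $b$ need attention, since for any edge not incident to $b$ the preimage coincides with that in $G$ and confluence is inherited. For $P=\langle b,c\rangle$ the preimage on $\{b,b^*,c\}$ carries the edges $\langle b,b^*\rangle$ and $\langle b^*,c\rangle$, hence is a single component containing $\langle b^*,c\rangle$, which maps onto $P$. For $P=\langle b,z\rangle$ with $z\ne c$ (in particular for $z=a$) the preimage on $\{b,b^*,z\}$ is again a single component, where the added edge $\langle b^*,b\rangle$ keeps $b^*$ in the same component as $b$, and it contains the retained edge $\langle b,z\rangle$ mapping onto $P$. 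This last point shows why the edge $\langle b^*,b\rangle$ is genuinely needed: without it the singleton $\{b^*\}$ would form a component of $f^{-1}(\langle a,b\rangle)$ carrying no edge over $P$, and confluence would fail.

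Having checked connectivity, surjectivity on vertices and edges, confluence, and the separation property, the construction supplies the graph $H$ and epimorphism $f^H_G=f$ demanded by Theorem~\ref{transitive-edges} for every graph $G\in\mathcal G$ and every admissible triple $a,b,c$. That theorem then yields the transitivity of the edge relation $E(\mathbb G)$, completing the proof.
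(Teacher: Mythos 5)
Your proposal is correct and follows essentially the same route as the paper: both verify the hypothesis of Theorem~\ref{transitive-edges} by splitting $b$ into two preimage vertices via a new vertex that subdivides one of the two edges at $b$ (you subdivide $\langle b,c\rangle$, the paper subdivides $\langle a,b\rangle$), deleting the original edge and checking confluence through Theorem~\ref{confluent-edges}. The verification of connectivity, edge-surjectivity, confluence, and the separation property is sound, so nothing further is needed.
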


\begin{proof}
To see that $\mathbb{G}$ has a transitive edge relation note that given $G$, a finite connected graph containing edges $\langle a,b\rangle$ and $\langle b,c \rangle$, where $a,b,c$ are pairwise different, we may let $H$ be the graph with $V(H)=V(G)\cup \{s\}$ and $E(H)=E(G)\cup \{\langle a,s\rangle, \langle s,b\rangle\}\setminus \{\langle a,b\rangle \}$ and  $f^H_G\colon H \to G$ be the confluent epimorphism defined by $f^H_G(s)=b$ and $f^H_G=id$ otherwise.  Then $H$ satisfies conditions of Theorem \ref{transitive-edges}, so $\mathbb G$ has a transitive edge relation.

\end{proof}

\begin{notation}
Throughout the remainder of the article, we denote the family of nonempty finite connected graphs with confluent epimorphisms by $\mathcal G$ and the projective \F limit of the family $\mathcal G$ by $\mathbb G$. We denote the topological realization of $\mathbb G$ by $|\mathbb G|$.
\end{notation}

\section{Kelley property and dimension}

The following definition adapts the idea of a Kelley continuum to topological graphs.  It was first given in \cite[Definition 6.3]{WJC-RPR-Fraisse}.

\begin{definition}
A topological  graph ${\bf X}$ is called {\it Kelley} if ${\bf X}$ is connected and for every closed and connected set ${\bf K}\subseteq {\bf X}$,
every vertex ${\bf p}\in {\bf K}$, and every sequence ${\bf p}_n\to {\bf p}$ of vertices in ${\bf X}$ there are closed and connected sets ${\bf K}_n$ such that ${\bf p}_n\in {\bf K}_n$ and
$\lim {\bf K}_n={\bf K}$.

\end{definition}

\begin{proposition}\label{Kelley}

Let $ \mathbb H$ be a Kelley topological graph, which has a transitive set of edges. Then 
$|{\mathbb H}|$ is a Kelley continuum.
\end{proposition}

\begin{proof}

To see that $|{\mathbb H}|$ is Kelley, let $K$ be a subcontinuum of $|{\mathbb H}|$, $p \in K$, and $p_1,p_2,\ldots$ be a sequence in $|\mathbb H|$ that converges to $p$. Let $\pi\colon \mathbb H\to |{\mathbb H}|$ be the quotient map, ${\bf p}' \in \pi^{-1}(p)$ and ${\bf p}_i'\in \pi^{-1}(p_i)$ be such that ${\bf p}_i'$ converges to ${\bf p}'$. Since $\mathbb H$ is Kelley \blu{and $\pi^{-1}(K)$ is connected, see Observation~\ref{quotient connected}.} there exist closed connected graphs ${\bf K}_i' \subseteq {\mathbb H}$ such that ${\bf p}_i' \in {\bf K}_i'$ and ${\bf K}_i'$ converges to $\pi^{-1}(K)$. Then $\pi({\bf K}_i)$ are subcontinua in $|{\mathbb H}|$ that contain $\pi({\bf p}'_i)=p_i$ and converge to $K$, which follows from the continuity of $\pi$.
\end{proof}

\begin{corollary}
The continuum $|\mathbb G|$ is a Kelley continuum.
\end{corollary}
\begin{proof}
    By \cite[Theorem 6.5]{WJC-RPR-Fraisse}
 $\mathbb G$ is  Kelley.
Therefore the conclusion follows from Proposition \ref{Kelley}.
\end{proof}

\begin{theorem}
Suppose  ${\bf G}$ is a topological graph with a transitive edge relation and such that $\card (\pi^{-1}(x))\le 2 $, where $\pi$ is the quotient map. Then $\dim |{\bf G}|\le 1$.
\end{theorem}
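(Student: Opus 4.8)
The statement asserts that a topological graph $G$ with transitive edge relation and fibers of the quotient map of cardinality at most $2$ has topological realization $|G|$ of dimension at most $1$. The plan is to transfer a covering/dimension estimate from $V(G)$ to the quotient $|G| = V(G)/E(G)$ by exploiting that the quotient map $\pi$ is at most $2$-to-$1$ and that $V(G)$ is $0$-dimensional. Since $V(G)$ is $0$-dimensional, compact, and metrizable, it admits arbitrarily fine finite covers by pairwise disjoint clopen sets, which have order $0$ (nerve is discrete). The obstruction to concluding $\dim|G|=0$ is precisely that $\pi$ identifies pairs of points lying in a common edge, and two such points may sit in different clopen pieces; gluing them raises the order of the induced cover in $|G|$ by at most $1$, which is the source of the bound $\dim|G|\le 1$.

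Concretely, I would fix $\varepsilon>0$ and choose a finite clopen partition $\mathcal{U}=\{U_1,\dots,U_k\}$ of $V(G)$ with $\operatorname{diam}(U_i)<\varepsilon$. The sets $\pi(U_i)$ form a finite closed cover of $|G|$ (closed because $\pi$ is a closed map, being a quotient by a compact equivalence relation on a compact space), but they need not be disjoint: $\pi(U_i)\cap\pi(U_j)\neq\emptyset$ exactly when some edge $\langle a,b\rangle\in E(G)$ has $a\in U_i$ and $b\in U_j$. The key point, supplied by the hypothesis $\operatorname{card}(\pi^{-1}(x))\le 2$, is that for each $x\in|G|$ the fiber $\pi^{-1}(x)$ meets at most two of the $U_i$, so $x$ lies in at most two of the sets $\pi(U_i)$. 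Hence the cover $\{\pi(U_i)\}$ has order at most $2$, i.e. its multiplicity does not exceed $2$, meaning no point lies in three of its members.

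To convert this closed cover of bounded order and small mesh into the dimension bound, I would pass to a suitable open refinement. A standard argument shrinks each closed set $\pi(U_i)$ to an open set $V_i\supseteq\pi(U_i)$ with $\operatorname{diam}(V_i)$ still under control (using normality of the compact metric space $|G|$ and the fact that the $\pi(U_i)$ have order $2$), so that the resulting finite open cover of $|G|$ has mesh less than a prescribed bound and order at most $2$, i.e. every point lies in at most two members. Since $\varepsilon$ was arbitrary, $|G|$ admits arbitrarily fine finite open covers of order $\le 2$; by the covering definition of dimension this yields $\dim|G|\le 1$.

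The main obstacle I anticipate is the refinement step: one must pass from the closed cover $\{\pi(U_i)\}$ of order $2$ to an \emph{open} cover of order $2$ without increasing the multiplicity, which requires care since naively enlarging closed sets to open neighborhoods can create unwanted triple overlaps. The clean way around this is to use transitivity of $E(G)$ together with Remark~\ref{card}: transitivity forces the identifications to be as simple as possible (each point is glued to at most one other), so the overlap pattern of the $\pi(U_i)$ is genuinely pairwise and the incidence structure of the cover is that of a graph rather than a higher-order complex. With that structural fact in hand, one can thicken the $\pi(U_i)$ to open sets respecting the pairwise overlap pattern, keeping the order at $2$, and the dimension estimate follows.
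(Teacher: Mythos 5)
Your proposal is correct, but it takes a genuinely different route from the paper. The paper disposes of the theorem in one line by citing a Hurewicz-type dimension-raising theorem (Theorem 22.2 in Nadler's dimension theory text): for a surjection $f\colon X\to Y$ of compact sets with $\card(\bd f^{-1}(y))\le m$ for all $y$, one has $\dim Y\le\dim X+m-1$; applying this with $\dim V(G)=0$ and $m=2$ gives the bound immediately. You instead reprove this special case from scratch: push a fine clopen partition of the $0$-dimensional space $V(G)$ forward through $\pi$ to get a finite closed cover of $|G|$ of small mesh whose multiplicity is at most $2$ because each fiber meets at most two partition elements, then conclude $\dim|G|\le 1$. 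This is essentially the standard proof of the cited theorem in the zero-dimensional case, so it buys self-containedness at the cost of length. Two remarks on your execution. First, the step you flag as the main obstacle --- thickening the closed cover to an open cover without raising the order --- does not require transitivity or any special structure of the overlaps: the standard swelling lemma for normal spaces produces open sets $V_i\supseteq\pi(U_i)$ such that a subfamily of the $V_i$ has nonempty intersection only if the corresponding $\pi(U_i)$ do, so multiplicity is automatically preserved; alternatively, you can skip the thickening entirely by invoking the decomposition characterization of covering dimension for compact metric spaces (arbitrarily fine finite \emph{closed} covers of multiplicity $\le n+1$ already imply $\dim\le n$). Second, transitivity is not doing the work you attribute to it in the last paragraph; its role is only to make $E(G)$ an equivalence relation so that $|G|$ and $\pi$ exist, while the order bound comes solely from $\card(\pi^{-1}(x))\le 2$ together with disjointness of the clopen pieces.
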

\begin{proof}
It is known, see e.g. \cite[Theorem 22.2]{Nadler-Dimension},  that for a continuous surjective map $f\colon X\to Y$ between compact sets, if $\card(\bd f^{-1}(y)))\le m$ for all $y\in Y$, then 
$\dim(Y)\le\dim(X)+m-1$. In our case $\dim ({\bf G})=0$  and $\card(\bd (\pi^{-1}(x)))=\card (\pi^{-1}(x))\le 2$ for all $x\in |{\bf G}|$, by Theorem \ref{transitive-edges}, so the conclusion follows. 
\end{proof}

\begin{corollary}
The continuum $|\mathbb G|$ is one-dimensional.
\end{corollary}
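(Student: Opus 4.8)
The plan is to split the claim into the two inequalities $\dim|\mathbb G|\le 1$ and $\dim|\mathbb G|\ge 1$, and to observe that essentially all the work has already been done in the immediately preceding theorem, so that the corollary is a matter of assembling the hypotheses and adding one elementary continuum-theoretic fact.

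For the upper bound I would apply the theorem just proved, which asserts that any topological graph $G$ with transitive edge relation and $\card(\pi^{-1}(x))\le 2$ for every $x$ satisfies $\dim|G|\le 1$. We have already established that the edge relation $E(\mathbb G)$ is transitive, and Remark \ref{card} supplies the bound $\card(\pi^{-1}(x))\le 2$ for each $x\in|\mathbb G|$. Taking $G=\mathbb G$ in that theorem therefore gives $\dim|\mathbb G|\le 1$ immediately.

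For the lower bound I would use the standard fact that every nondegenerate continuum has dimension at least $1$ (a connected space with more than one point cannot be totally disconnected, hence cannot be $0$-dimensional). Thus it suffices to verify that $|\mathbb G|$ is nondegenerate. This follows because $\mathcal G$ contains arbitrarily large finite connected graphs (for instance paths of every length), and property (1) of Theorem \ref{limit} provides an epimorphism from $\mathbb G$ onto each of them; since epimorphisms are surjective on vertices, $\mathbb G$ must be infinite, and the bound $\card(\pi^{-1}(x))\le 2$ then forces the quotient $|\mathbb G|$ to be infinite as well, in particular nondegenerate.

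I do not anticipate any serious obstacle: the substantive content was carried out in the preceding theorem, and the remaining steps are routine. The only point deserving a line of care is the nondegeneracy of $|\mathbb G|$, which is settled as soon as one notes that the \Fl surjects onto members of $\mathcal G$ of unbounded size. Combining the two inequalities yields $\dim|\mathbb G|=1$.
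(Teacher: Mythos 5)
Your proof is correct and follows the same route the paper intends: the corollary is stated immediately after the theorem giving $\dim|G|\le 1$ for topological graphs with transitive edge relation and fibers of cardinality at most $2$, and you apply it exactly as the paper does, using the transitivity result and Remark \ref{card}. Your explicit verification of the lower bound via nondegeneracy of $|\mathbb G|$ is a routine but welcome addition that the paper leaves implicit.
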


\section{indecomposable topological graphs}
\begin{definition}
A connected topological graph ${\bf G}$ is called {\it decomposable} if there are two closed connected subgraphs ${\bf A}$ and ${\bf B}$
such that ${\bf G}={\bf A}\cup {\bf B}$, ${\bf A}\ne {\bf G}$, and ${\bf B}\ne {\bf G}$. A topological graph ${\bf G}$ is called {\it indecomposable} if it is not decomposable.
\end{definition}

\begin{theorem}\label{two-pass}
Suppose $\mathcal F$ is a projective \F family of connected graphs  and for every graph $F\in \mathcal F$ there is a graph
$G\in \mathcal F$ and an  epimorphism $f^G_F\colon  G\to F$ such that for every two connected graphs $A,B\subseteq G$ such that $G=A\cup B$ we have $f^G_F(A)=F$ or $f^G_F(B)=F$. Then the projective \F limit $\mathbb F$ of
$\mathcal F$ is an indecomposable graph.
\end{theorem}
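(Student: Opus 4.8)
The plan is to realize $\mathbb F$ as an inverse limit of a Fra\"{\i}ss\'e sequence and to transfer the hypothesized two-pass property onto the bonding maps of that sequence; once this is done, indecomposability follows from a short projection argument. The first thing I would record is the stability of the two-pass condition under precomposition: if $f^G_F\colon G\to F$ is as in the statement and $g\colon H\to G$ is any epimorphism in $\mathcal F$, then $f^G_F\circ g\colon H\to F$ again has the two-pass property. Indeed, given connected $A',B'\subseteq H$ with $V(H)=V(A')\cup V(B')$, the images $g(V(A')),g(V(B'))$ are connected subsets of $V(G)$ (an epimorphic image of a connected graph is connected) whose union is $V(G)$ by surjectivity, so applying the hypothesis to these images yields $f^G_F(g(V(A')))=V(F)$ or $f^G_F(g(V(B')))=V(F)$, which is precisely the desired conclusion for $f^G_F\circ g$.

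Next I would fix a Fra\"{\i}ss\'e sequence $\{F_n,\alpha_n\}$ with $\iLim\{F_n,\alpha_n\}\cong\mathbb F$ (Theorem \ref{fund-sequence}). For each $n$ the hypothesis supplies $G\in\mathcal F$ and an epimorphism $f^G_{F_n}\colon G\to F_n$ with the two-pass property, and by Proposition \ref{B-CThm} there are $m\ge n$ and an epimorphism $g\colon F_m\to G$ with $f^G_{F_n}\circ g=\alpha^m_n$. By the stability just proved, the bonding map $\alpha^m_n$ itself has the two-pass property, and since precomposing $\alpha^m_n$ with further bonding maps preserves this property, we may take $m>n$. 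Thus for every $n$ there is $m>n$ such that every pair of connected subgraphs of $F_m$ covering $V(F_m)$ has at least one member mapped onto $V(F_n)$ by $\alpha^m_n$.

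To finish, suppose toward a contradiction that $V(\mathbb F)=V(A)\cup V(B)$ with $A,B$ closed connected subgraphs and $V(A),V(B)\neq V(\mathbb F)$. Pick $a\in V(\mathbb F)\setminus V(B)$ and $b\in V(\mathbb F)\setminus V(A)$. Since the clopen sets $(\alpha^\infty_n)^{-1}(x)$, for $x\in V(F_n)$ and $n\in\mathbb N$, form a basis for the inverse limit topology and $B$ is closed, some such set contains $a$ and misses $B$; its defining vertex then lies outside $\alpha^\infty_{n_1}(V(B))$, so $\alpha^\infty_{n_1}(V(B))\neq V(F_{n_1})$, and surjectivity of each $\alpha^{n_1}_n$ forces $\alpha^\infty_n(V(B))\neq V(F_n)$ for all $n\ge n_1$; symmetrically $\alpha^\infty_n(V(A))\neq V(F_n)$ for all $n\ge n_2$. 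Fixing $n\ge\max(n_1,n_2)$ and choosing $m>n$ with $\alpha^m_n$ two-pass, the connected subgraphs $\alpha^\infty_m(V(A))$ and $\alpha^\infty_m(V(B))$ cover $V(F_m)$, so the two-pass property gives $\alpha^m_n(\alpha^\infty_m(V(A)))=V(F_n)$ or $\alpha^m_n(\alpha^\infty_m(V(B)))=V(F_n)$; as $\alpha^m_n\circ\alpha^\infty_m=\alpha^\infty_n$, this means $\alpha^\infty_n(V(A))=V(F_n)$ or $\alpha^\infty_n(V(B))=V(F_n)$, a contradiction.

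I expect the conceptual heart of the argument, and the step that needs the most care, to be the transfer in the second paragraph: the hypothesis only furnishes \emph{some} graph $G$ carrying a two-pass map to $F_n$, and this must be threaded into the fixed Fra\"{\i}ss\'e sequence as a genuine bonding map $\alpha^m_n$. The projective universality encoded in Proposition \ref{B-CThm}, combined with the precomposition stability from the first paragraph, is exactly what licenses this; the only place topology enters is the routine verification, via zero-dimensionality and the clopen basis of fibers, that both projections $\alpha^\infty_n(V(A))$ and $\alpha^\infty_n(V(B))$ become proper for all large $n$.
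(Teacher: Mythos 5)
Your proof is correct, and its core coincides with the paper's: push the hypothetical decomposition $V(\mathbb F)=V(A)\cup V(B)$ down to a finite level where both images are proper, then use the two-pass hypothesis on a suitable factorization to force one image to be all of the target, a contradiction. The difference is purely in the packaging. The paper never threads $G$ into a Fra\"{\i}ss\'e sequence: it picks $\varepsilon>0$ witnessing that $a\in V(A)\setminus V(B)$ is $\varepsilon$-far from $V(B)$ (and symmetrically), takes a single $\varepsilon$-map $f_F\colon\mathbb F\to F$ via Theorem \ref{limit}(3) --- which immediately makes both $f_F(V(A))$ and $f_F(V(B))$ proper --- and then lifts $f_F$ through the hypothesized $f^G_F\colon G\to F$ using the extension property of the limit (Theorem \ref{limit}(2)) to get $f_G\colon\mathbb F\to G$ with $f_F=f^G_F\circ f_G$; applying the hypothesis to the connected sets $f_G(V(A)),f_G(V(B))$ covering $V(G)$ finishes in one line. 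You instead fix a Fra\"{\i}ss\'e sequence, prove that the two-pass property is stable under precomposition with epimorphisms, and invoke Proposition \ref{B-CThm} to install two-pass bonding maps $\alpha^m_n$, obtaining properness of the projections from the clopen fiber basis rather than from an $\varepsilon$-map. Both routes are sound; the paper's is shorter because the limit's extension property replaces your sequence-threading step, while your precomposition lemma isolates, as a reusable statement, exactly the observation implicit in the paper's final sentence. (One cosmetic slip: with the paper's convention $\alpha^m_n\colon F_m\to F_n$ for $m>n$, the surjection you invoke should be written $\alpha^n_{n_1}$, not $\alpha^{n_1}_n$.)
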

\begin{proof}

Suppose the projective \F limit $\mathbb F$ of
$\mathcal F$ is a decomposable graph, i.e. there are two closed connected subgraphs ${\bf A}$ and ${\bf B}$
such that $\mathbb F={\bf A}\cup {\bf B}$, ${\bf A}\ne \mathbb F$, and ${\bf B}\ne \mathbb F$. Consider vertices ${\bf a}\in {\bf A}\setminus {\bf B}$
and ${\bf b}\in {\bf B}\setminus {\bf A}$ and let $\varepsilon >0$ be such that $\varepsilon < d({\bf x},{\bf a})$ for every ${\bf x}\in {\bf B}$ and 
$\varepsilon < d({\bf x},{\bf b})$ for every ${\bf x}\in {\bf A}$, where $d$ is a fixed metric on $ \mathbb F$. Let $F\in \mathcal F$ be a graph and let $f_F\colon \mathbb F \to F$ be an 
{$\varepsilon$-epimorphism}. Then $f_F({\bf A})\cup f_F({\bf B})=F$, $f_F({\bf A})\ne F$, and $f_F({\bf B})\ne F$. 
Let the graph $G$ and the epimorphism $f^G_F$ be as in assumption of the theorem, and let $f_G\colon \mathbb F\to G$ be an  epimorphism such that 
$f_F=f_F^G\circ f_G$. Then $f^G_F(f_G({\bf A}))=F$ or $f^G_F(f_G({\bf B}))=F$, contrary to the established properties of $f_F({\bf A})$ and $f_F({\bf B})$. 
\end{proof}
\begin{theorem}
The family $\mathcal{G}$ satisfies assumptions of Theorem \ref{two-pass}, so its projective \F limit $\mathbb G$ is indecomposable.

\end{theorem}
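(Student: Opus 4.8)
The plan is to verify directly the hypothesis of Theorem~\ref{two-pass}: for every finite connected graph $F$ I must produce a finite connected graph $G$ and a confluent epimorphism $f^G_F\colon G\to F$ such that whenever $V(G)=V(A)\cup V(B)$ with $A,B$ connected, at least one of $f^G_F(V(A))$, $f^G_F(V(B))$ equals $V(F)$. The model to keep in mind is the tent map of an arc onto an arc, whose defining feature is that it folds the domain over the range twice with a single turning point; I will build a combinatorial ``two-pass'' analogue for an arbitrary connected $F$.

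\textbf{Construction.} Since $F$ is connected, I first fix a walk $u_0,u_1,\dots,u_m$ in $F$ that passes through every vertex and traverses every edge. I then form a \emph{spine}: a path with vertices $g_0,\dots,g_{2m}$ and edges $\langle g_i,g_{i+1}\rangle$, and I define $f(g_i)=u_i$ for $i\le m$ and $f(g_{m+i})=u_{m-i}$ for $0\le i\le m$, so that $f$ folds the spine palindromically onto $F$ with turning vertex $g_m$. This $f$ is already surjective on vertices and edges, and each half of the spine maps onto all of $V(F)$. To make $f$ confluent I \emph{locally enrich} the spine: at each spine vertex $g_i$ I glue a disjoint copy $F^{(i)}$ of $F$ by identifying the vertex of $F^{(i)}$ corresponding to $u_i$ with $g_i$, and I extend $f$ over $F^{(i)}$ by the canonical isomorphism $F^{(i)}\cong F$. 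Let $G$ be the resulting graph and $f^G_F$ the resulting map.

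\textbf{Verifications.} First, $f^G_F$ is an epimorphism. Second, it is confluent, which I check through condition~(2) of Theorem~\ref{confluent-edges}: given an edge $P=\langle u,w\rangle$ of $F$ and a vertex $a$ of $G$ with $f^G_F(a)=u$, the point $a$ lies in some full identity copy of $F$ (if $a$ is a spine vertex it is the distinguished vertex of its own attached copy), and inside that copy $a$ is joined by an edge $E$ over $P$ to a vertex over $w$; taking $R=\{a\}$ satisfies the condition. Third, and this is the heart, the \emph{doubling property}. The decisive observation is that $g_m$ is a \emph{cut vertex} of $G$: every copy is attached at a single spine vertex, so deleting $g_m$ separates $G$ into the left-arm side (containing $g_0$), the right-arm side (containing $g_{2m}$), and the detached copy $F^{(m)}$. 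Now suppose $V(G)=V(A)\cup V(B)$ with $A,B$ connected; say $g_m\in A$. Then $B\subseteq G\setminus\{g_m\}$ is contained in a single one of these three pieces, so the other two pieces are covered by $A$; in every case $A$ contains one whole closed arm of the spine, whose image is all of $V(F)$, whence $f^G_F(V(A))=V(F)$.

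\textbf{Main obstacle.} The real content is the tension between the two requirements. A bare folded path already has the doubling property but fails to be confluent at ramification vertices, since an occurrence of a branch vertex in a path sees only two of its incident edges; on the other hand, the most obvious confluent map---two parallel copies of $F$ joined along a vertex---fails the doubling property, because one can split both copies along the same cut of $F$. The enrichment by local copies is exactly what reconciles the two: the copies supply the missing incident edges needed for confluence, while, being attached at single spine vertices, they leave the turning vertex $g_m$ a genuine cut vertex and so preserve the hinge on which the doubling argument rests. The step requiring the most care is therefore checking that the enrichment creates no cross-arm connection, so that $g_m$ remains a cut vertex separating the two passes.
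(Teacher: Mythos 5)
Your construction is genuinely different from the paper's (which chains $2n+1$ copies of $F$ linked by single edges, realizing each edge of $F$ once per pass, and then applies a pigeonhole count), and the construction itself is sound; the epimorphism and confluence checks are fine. But there is a real gap at the step you yourself identify as the heart of the argument. From ``say $g_m\in A$'' you conclude ``$B\subseteq G\setminus\{g_m\}$'', which presupposes $g_m\notin V(B)$. Nothing in Theorem~\ref{two-pass} requires $A$ and $B$ to be disjoint or even to have disjoint interiors --- the decompositions one must rule out will in general overlap --- so the case $g_m\in V(A)\cap V(B)$ is simply not covered, and in that case $B$ need not lie in a single component of $G\setminus\{g_m\}$: it can straddle $g_m$ and meet both arms without containing either one entirely, and the same can be true of $A$. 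Since this is exactly the step on which the whole ``two-pass'' mechanism rests, the proof as written is incomplete.

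The gap is repairable, because your graph has a stronger structural property than the one you invoke. Every copy $F^{(i)}$ is attached to the spine at the single vertex $g_i$, so for $i<k<j$ the vertex $g_k$ separates $g_i$ from $g_j$ in $G$; hence for any connected subgraph $A$ the trace $V(A)\cap\{g_0,\dots,g_{2m}\}$ is an interval of spine indices. Two intervals whose union is $\{0,1,\dots,2m\}$ cannot both fail to contain one of $\{0,\dots,m\}$ or $\{m,\dots,2m\}$: if the one containing index $0$ stops before $m$, the other must contain all indices from $m$ to $2m$. Thus one of $A,B$ contains a full closed arm of the spine, whose image is all of $V(F)$ because the walk visits every vertex. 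With that substitution your argument goes through and gives an alternative, somewhat more economical witness for the hypothesis of Theorem~\ref{two-pass} than the paper's; as written, it does not.
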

\begin{proof}
Given graph $F$, let $\{E_1,E_2,\dots, E_n\}$ be the set of all nondegenerate edges and let $E_i=\langle a_i, b_i\rangle$ for each $i\in\{1,2,\dots,n\}$. 

\begin{center}
\begin{tikzpicture}[scale=0.65]

\draw (2,0) ellipse (0.5 and 2);
\draw (4,0) ellipse (0.5 and 2);

\draw (7,0) ellipse (0.5 and 2);
\draw (9,0) ellipse (0.5 and 2);
\draw (11,0) ellipse (0.5 and 2);
\draw (14,0) ellipse (0.5 and 2);
\draw (16,0) ellipse (0.5 and 2);

\draw (-1,0) ellipse (0.5 and 2);

\draw (2,-1) -- (3.78, 1.5);
\filldraw[black] (2,-1) circle (1pt);
\filldraw[black] (3.78, 1.5) circle (1pt);
 \node at (2,-1.3) {\tiny $a_1$};
\node at (4,1) {\tiny $b_1$};

\node at (5.5,0) {$\dots$};

\draw (9,-1) -- (10.78, 1.5);

  \filldraw[black] (9,-1) circle (1pt);
    \filldraw[black] (10.78, 1.5) circle (1pt);
 \node at (9,-1.3) {\tiny $a_1$};
\node at (11,1) {\tiny $b_1$};
\node at (12.5,0) {$\dots$};

\draw (7.2,1.1) -- (9.2, 1.5);
\filldraw[black] (7.2,1.1) circle (1pt);
\filldraw[black] (9.2, 1.5) circle (1pt);
 \node at (7,1) {\tiny $a_n$};
  \node at (9,1) {\tiny $b_n$};
  
  \draw (14.2,1.1) -- (16.2, 1.5);
  
  \filldraw[black] (14.2,1.1) circle (1pt);
\filldraw[black] (16.2, 1.5) circle (1pt);
 \node at (14,1) {\tiny $a_n$};
  \node at (16,1) {\tiny $b_n$};
\node at (2,-3) {\tiny 1};  
 \node at (4,-3) {\tiny 2};  
 \node at (7,-3) {\tiny $n$};  
\node at (9,-3) {\tiny $n+1$};  
\node at (11,-3)  {\tiny $n+2$};  
\node at (14,-3)  {\tiny $2n$};
\node at (16,-3)  {\tiny $2n+1$};  

\node at (12.5,-3) {\tiny $\dots$};
\node at (5.5,-3) {\tiny $\dots$};
\node at (-1,-3) {\tiny $F$};
\node at (0.5,0) {$\underleftarrow{f^G_F}$};

\end{tikzpicture}
\end{center}

Define the graph $G$ by the following conditions (see the picture above, each ellipse represents a copy of the graph $F$):
\begin{enumerate}
    \item $V(G)=V(F)\times \{1,2,\dots, 2n+1\}$;
   \item 
   $E(G)=E(F)\times \{1,2,\dots, 2n+1\}\cup \\ 
   \bigcup_{i=1}^n \{ \langle (a_{i}, i), (b_{i},i+1)\rangle, \langle (a_{i},n+i),  (b_{i},n+i+1)\rangle \}$.
\end{enumerate}
Define $f^G_F$ to be the projection onto the first coordinate and note that $f^G_F$ is confluent. Now suppose that $A$ and $B$ are connected subgraphs of $G$ such that $A\cup B=G$.  Note that if $A$ intersects $F\times \{i\}$ and $F\times \{i+j\}$ for some $j\ge 1$ then, since $A$ is connected, it must contain the
edges between $F\times \{i+k\}$ and $F\times \{i+k+1\}$,
for each $k=0,1,\ldots, j-1$.

One of $A$ or $B$, say $A$, must intersect at least $n+1$ of the copies of $F$.  It follows that $f_F^G(A)=F$. Hence, by Theorem \ref{two-pass}, $\mathbb G$ is indecomposable.
\end{proof}

The following observation follows from the fact that a topological graph ${\bf G}$
is connected if and only if its topological realization $|{\bf G}|$
is connected.

\begin{observation}
Suppose ${\bf G}$ is an indecomposable topological graph with a transitive edge relation; then the topological realization $|{\bf G}|$  is an indecomposable continuum.
\end{observation}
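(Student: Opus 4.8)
The plan is to prove the contrapositive-style equivalence: a topological graph $G$ with transitive edge relation is indecomposable if and only if its topological realization $|G|$ is an indecomposable continuum, and then read off the stated one-directional implication. The crucial bridge is the quotient map $\pi\colon G \to |G|$, together with the stated fact that a subgraph is connected precisely when its image under $\pi$ is a connected subcontinuum of $|G|$. I would first record carefully how closed connected subgraphs of $G$ correspond to subcontinua of $|G|$ under $\pi$, and how the decomposition data transfers across $\pi$ in both directions.

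First I would show that decomposability transfers from $|G|$ down to $G$. Suppose $|G|$ is decomposable, so there are proper subcontinua $P, Q \subseteq |G|$ with $P \cup Q = |G|$. Set $A = \pi^{-1}(P)$ and $B = \pi^{-1}(Q)$. Since $\pi$ is continuous, $A$ and $B$ are closed; since $\pi$ is a quotient map and $P, Q$ are connected, the preimages $A, B$ are connected subgraphs (here I would invoke the quoted equivalence between connectedness of a subgraph and connectedness of its realization, applied to the restriction $\pi|_A$). Clearly $V(A) \cup V(B) = \pi^{-1}(P \cup Q) = V(G)$. The properness $P \ne |G|$ forces $A \ne G$ because $\pi$ is surjective, and likewise for $B$. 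Hence $G$ is decomposable, which is the contrapositive of the implication we want.

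For the forward direction as actually stated, I would instead argue directly from the definitions. Assume $G$ is indecomposable and suppose, toward a contradiction, that $|G|$ is decomposable with proper subcontinua $P, Q$ covering $|G|$. Running the same preimage construction $A = \pi^{-1}(P)$, $B = \pi^{-1}(Q)$ produces closed connected proper subgraphs of $G$ covering $V(G)$, contradicting indecomposability of $G$. The transitivity hypothesis enters only insofar as it guarantees $|G|$ exists as a genuine topological realization (the edge relation being an equivalence relation), so $\pi$ is a well-defined quotient map; it is what makes the passage between $G$ and $|G|$ legitimate.

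The main obstacle, and the step I would treat most carefully, is verifying that $\pi^{-1}$ sends connected sets to connected subgraphs and properly proper sets to proper sets. Connectedness transfer is exactly the quoted equivalence, but I must check it applies to the restriction of $\pi$ to the saturated set $\pi^{-1}(P)$ rather than to all of $G$; since $\pi^{-1}(P)$ is a union of $\pi$-fibers, the restricted map $\pi|_{\pi^{-1}(P)} \colon \pi^{-1}(P) \to P$ is itself the quotient map realizing $P$, so the equivalence applies verbatim. The properness is the only subtle point: a proper subcontinuum $P \subsetneq |G|$ has $\pi^{-1}(P) \subsetneq V(G)$ precisely because $\pi$ is surjective, so any point of $|G|\setminus P$ has a nonempty fiber missed by $\pi^{-1}(P)$; this is routine but worth stating so that $V(A) \ne V(G)$ is not taken for granted.
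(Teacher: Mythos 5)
Your argument is correct and matches the paper's (implicit) proof: the paper derives this observation precisely from the fact that a topological subgraph is connected if and only if its realization is connected, pulling a decomposition of $|G|$ back through $\pi$ to a decomposition of $G$. Your extra care about applying the connectedness equivalence to the saturated set $\pi^{-1}(P)$ and about properness via surjectivity of $\pi$ only makes explicit what the paper leaves to the reader.
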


\begin{corollary}
The continuum $|\mathbb G|$ is indecomposable.
\end{corollary}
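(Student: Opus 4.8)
The plan is to derive this directly from the two preceding results in the excerpt: the Theorem stating that $\mathbb{G}$ is indecomposable, and the Observation stating that an indecomposable topological graph with transitive edge relation has an indecomposable topological realization. The entire argument is a matter of verifying that the hypotheses of the Observation are met for $G = \mathbb{G}$.

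First I would recall that $\mathbb{G}$ is the projective Fra\"{\i}ss\'e limit of the class $\mathcal{G}$ of finite connected graphs with confluent epimorphisms. The immediately preceding Theorem establishes that $\mathbb{G}$ is an indecomposable topological graph, so the first hypothesis of the Observation is in hand. Second, I would invoke the earlier Theorem asserting that the edge relation $E(\mathbb{G})$ is transitive; this was proved via Theorem~\ref{transitive-edges} by the explicit construction splitting each edge $\langle a,b\rangle$ through an intermediate vertex $s$. Thus the transitivity hypothesis is also satisfied.

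With both hypotheses verified, the Observation applies verbatim with $G = \mathbb{G}$, yielding that the topological realization $|\mathbb{G}|$ is an indecomposable continuum. Since the notation fixes $|\mathbb{G}|$ as precisely this realization, the conclusion follows. I would write the proof in essentially one sentence: by the preceding results $\mathbb{G}$ is an indecomposable topological graph whose edge relation is transitive, so by the Observation its topological realization $|\mathbb{G}|$ is indecomposable.

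I do not anticipate any genuine obstacle here, since this corollary is purely a packaging step that combines two already-established facts through the bridging Observation. The only point requiring a moment of care is confirming that the Observation's conclusion delivers an indecomposable \emph{continuum} (not merely an indecomposable topological graph), which it does; connectedness and the continuum structure of $|\mathbb{G}|$ are already guaranteed by the framework, as $\mathbb{G}$ is connected because every member of $\mathcal{G}$ is connected and connectedness passes to inverse limits and to the topological realization.
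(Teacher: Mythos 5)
Your proof is correct and is exactly the argument the paper intends: the corollary is an immediate consequence of the Theorem that $\mathbb G$ is indecomposable, the earlier Theorem that $E(\mathbb G)$ is transitive, and the bridging Observation. No gaps.
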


If $X$ is a continuum, we say that $p,q\in X$ belong to the same composant if there is a proper subcontinuum of $X$ which contains both $p$ and $q$.
If moreover $X$ is non-trivial and indecomposable, then $X$ has uncountably many composants and they are mutually disjoint (see 11.15 and 11.17 in \cite{Nadler-intro}). In particular, we obtain the following corollary.

\begin{corollary}
    $|\mathbb G|$ is not arcwise connected.
\end{corollary}

\section{Arc components, pseudo-arcs, and Cantor fans}

 We show in this section that arc components are dense in both $\mathbb G$ and $|\mathbb{G}|$. Furthermore, we prove that the pseudo-arc, the universal pseudo-solenoid, and the Cantor fan, can be embedded in $|\mathbb{G}|$
and that each point of $|\mathbb G|$ is the top of the Cantor fan.

We start with the following lemmas.

\begin{lemma}  \label{arc-proj-to-arc}  
Suppose ${\bf G}$ is a topological graph which is an arc with a transitive edge relation, then the topological realization $|{\bf G}|$  is an arc or a point.
\end{lemma}

\begin{proof}

    \blu{Suppose $|{\bf G}|$ is not a point. Then ${\bf G}$ has more than two vertices. Let ${\bf a}$ and ${\bf b}$ be end-vertices of ${\bf G}$ and $x\in |{\bf G}|\setminus \{\pi({\bf a}), \pi({\bf b})\}$, where $\pi\colon{\bf G} \to |{\bf G}|$ is the quotient map. Let ${\bf t}\in\pi^{-1}(x)$, then ${\bf G} \setminus \{{\bf t}\}$ is disconnected, and write ${\bf G} \setminus \{{\bf t}\} = {\bf H}|{\bf K}$. Take ${\bf H}'={\bf H}\cap ({\bf G}\setminus \pi^{-1}(x))$ and ${\bf K}'={\bf K}\cap ({\bf G}\setminus \pi^{-1}(x))$. Then $\pi({\bf H}') \cup \pi({\bf K}')= |{\bf G}|\setminus\{x\}$ and $\pi({\bf H}') \cap \pi({\bf K}') = \overline{\pi({\bf H}')} \cap \pi({\bf K}') = \pi({\bf H}') \cap \overline{\pi({\bf K}')}=\emptyset$, so $|{\bf G}|\setminus \{x\}$ is disconnected. 

    Now suppose $x \in \{\pi({\bf a}), \pi({\bf b})\}$. We need to show that $|{\bf G}| \setminus \{x\}$ is connected.  
    Denote ${\bf C}=\pi^{-1}(\pi({\bf a}))$ and note that ${\bf C}$ is connected. An argument essentially identical to \cite[Proposition 6.3]{Nadler-intro} shows that if ${\bf G}\setminus {\bf C} = {\bf P}|{\bf Q}$, then ${\bf P} \cup {\bf C}$ and ${\bf Q} \cup {\bf C}$ are connected. If ${\bf b} \in {\bf C}$, i.e. $\pi({\bf a})=\pi({\bf b})$, then $\langle {\bf a}, {\bf b}\rangle \in E({\bf G})$, contradicting Definition~\ref{arc-def}.
   Therefore ${\bf b} \in {\bf P} \cup {\bf Q}$,  say ${\bf b} \in {\bf Q}$. Take any ${\bf p}\in {\bf P}$, we know that ${\bf G}\setminus\{{\bf p}\}$ is disconnected, so ${\bf G}\setminus\{{\bf p}\}={\bf T}|{\bf S}$, and say ${\bf a}\in {\bf T}$ and ${\bf b}\in {\bf S}$. Then ${\bf a}\in {\bf T}\cap ({\bf C}\cup {\bf Q})$, ${\bf b}\in {\bf S}\cap ({\bf C}\cup {\bf Q})$, and  $({\bf T}\cap ({\bf C}\cup {\bf Q})) \cup ({\bf S}\cap ({\bf C}\cup {\bf Q}))={\bf C}\cup {\bf Q}$, contradicting that ${\bf C}\cup {\bf Q}$ is connected.

    Thus $|{\bf G}| \setminus \{ x\}$ is disconnected if and only if $x \not \in\{\pi({\bf a}),\pi({\bf b})\}$. So, by the topological characterization of an arc, $|{\bf G}|$ is an arc.
    }
\end{proof}

The lemma below is stated in \cite[Lemma 3.8]{WJC-RPR-Fraisse} and it is proved in \cite[Lemma 3.15]{wazewski-fraisse}. We will use it implicitly in Theorems \ref{arc-dense} and \ref{Cantor-fan}.
\begin{lemma}\label{inv-limit-of-arcs}
    The inverse limit of finite arcs with monotone epimorphisms is an arc.
\end{lemma}

To show the density of arc components, we first need a lemma on lifting arcs.
\begin{lemma}\label{arcs-in-graphs}
Let $G$ and $H$ be  finite graphs and let 
$f\colon G\to H$ be a confluent epimorphism.  Let $A\subseteq H$ be an arc with an end-vertex $a$, and let $b\in G$ be a vertex such that $f(b)=a$. Then
there is 
an arc $B\subseteq G$ with one of the end-vertices equal to $b$ such that $f|_B\colon B\to A$ is
a monotone epimorphism. 
\end{lemma}

\begin{proof}
Let $A=\{a_1=a,a_2, \dots, a_n\}$ be an arc with $a=a_1$
and $\langle a_i,a_{i+1} \rangle\in E(H)$, $i=1,\ldots, n-1$.
Let $C_1$ be the component of $f^{-1}(\{ a_1=a,a_2\})$ that contains the vertex $b$ 
and note that $C_1$ contains an arc $B_1$ which has $b$ as an end-vertex, whose image is the set 
$\{a_1,a_2\}$,  such that  $f|_{B_1}$ is monotone,
and $|f^{-1}(a_2)\cap B_1|=1$.
In a similar way we may construct an arc $B_2$ in the component $C_2$ of $f^{-1}(\{ a_2,a_3\})$ such that $B_2$ contains the vertex from $f^{-1}(a_2)\cap B_1$ as an end-vertex,
 $f|_{B_2}$ is monotone,   $f({B_2})=\{a_2,a_3\}$, and 
$|f^{-1}(a_3)\cap B_2|=1$.
Continuing the same way, we construct an arc $B=B_1\cup B_2\cup \dots\cup B_{n-1}$ that has the required properties. 
\end{proof}

\begin{theorem}\label{arc-dense}
Each arc component of $\mathbb G$ is dense in $\mathbb G$.
\end{theorem}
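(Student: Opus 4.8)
The plan is to fix a point $\mathbf p=(p_1,p_2,\dots)\in\mathbb G$ and show that its arc component meets every nonempty basic open set. By Theorem \ref{fund-sequence} we may write $\mathbb G=\iLim\{F_n,\alpha_n\}$ for a \F sequence with confluent bonding maps, so the clopen sets $(\alpha^\infty_n)^{-1}(x)$, for $n\in\mathbb N$ and $x\in V(F_n)$, form a basis for the topology of $\mathbb G$, and each of them is nonempty because $\alpha^\infty_n$ is an epimorphism, hence surjective. Thus it suffices to produce, for every such $n$ and $x$, an arc in $\mathbb G$ that contains $\mathbf p$ and meets $(\alpha^\infty_n)^{-1}(x)$; since $\mathbf p$ is arbitrary this yields density of every arc component.

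First I would connect $p_n$ to $x$ inside the finite level. As $F_n$ is a finite connected graph it is arcwise connected, so there is an arc $A_n\subseteq F_n$ with end-vertices $p_n$ and $x$. I then lift this arc up the tower by induction. Assume an arc $A_m\subseteq F_m$ with end-vertex $p_m$ has been built; applying Lemma \ref{arcs-in-graphs} to the confluent epimorphism $\alpha_m\colon F_{m+1}\to F_m$, the arc $A_m$, its end-vertex $p_m$, and the vertex $p_{m+1}$ (which satisfies $\alpha_m(p_{m+1})=p_m$), I obtain an arc $A_{m+1}\subseteq F_{m+1}$ having $p_{m+1}$ as an end-vertex and such that $\alpha_m|_{A_{m+1}}\colon A_{m+1}\to A_m$ is monotone. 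Since $p_{m+1}$ is again an end-vertex, the induction proceeds.

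Next I would assemble these into a single subobject of $\mathbb G$. Let $B=\iLim\{A_m,\alpha_m|_{A_{m+1}}\}_{m\ge n}$, regarded as a subset of $\mathbb G$ by determining the coordinates below level $n$ through the maps $\alpha^n_m$; then $B$ is a closed subgraph of $\mathbb G$ on which the restricted edge relation remains transitive. Because each $p_m$ lies in $A_m$ we have $\mathbf p\in B$, and because $\alpha^\infty_n|_B\colon B\to A_n$ is surjective with $x\in A_n$, the set $B$ meets $(\alpha^\infty_n)^{-1}(x)$. It then remains only to see that $B$ is an arc, after which the point of $B$ lying over $x$ belongs to the arc component of $\mathbf p$, finishing the proof.

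The main obstacle is precisely this last point: verifying that $B$ is genuinely an arc in the sense of Definition \ref{arc-def}, rather than a more complicated arc-like continuum (which in general need contain no arc at all). This is exactly where the monotonicity furnished by Lemma \ref{arcs-in-graphs} is essential, since $B$ is the inverse limit of the finite arcs $A_m$ under the \emph{monotone} bonding maps $\alpha_m|_{A_{m+1}}$, and an inverse limit of arcs with monotone bonding maps is again an arc (equivalently, its topological realization $|B|$ is homeomorphic to an interval or to a point). I would establish this within the topological-graph framework by checking, through the same inverse-limit argument used for Lemma \ref{inverselim}, that each projection $\alpha^\infty_m|_B$ is monotone, and that the monotonicity of all bonding maps rules out the folding that would otherwise obstruct the arc structure. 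Granting this, $B$ is an arc through $\mathbf p$ reaching a point over $x$, completing the argument.
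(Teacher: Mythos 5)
Your proposal is correct and follows essentially the same route as the paper: take an arc at a finite level joining the projection of the fixed point to a target vertex, lift it level by level via Lemma \ref{arcs-in-graphs} to get arcs with monotone bonding maps, and pass to the inverse limit to obtain an arc through the fixed point meeting the prescribed basic open set. The only cosmetic difference is that the paper phrases density via $\varepsilon$-maps landing near a second point $\mathbf b$ rather than via basic clopen sets, and the paper likewise takes for granted the fact (which you rightly flag) that an inverse limit of finite arcs under monotone epimorphisms is again an arc.
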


\begin{proof}
Let ${\bf a},{\bf b}\in \mathbb G$ and let $\varepsilon >0$ be given. Write $\mathbb G$ as an inverse limit of a Fra\"\i ss\'e sequence $\{G_n,g_n\}$ for $\mathcal G$. Let $N$ be such that for $n>N$,
$g^\infty_n$ is an $\varepsilon$-map.  Write ${\bf a}=(a_0,a_1,a_2,\ldots)$ with $a_i\in G_i$, and let $b_{N}=g^\infty_{N}({\bf b})$. Let $A_N$ be an arc in $G_N$ with end-vertices $a_{N}$ and $b_{N}$. Apply Lemma \ref{arcs-in-graphs} to $g_N$, $a_N$, $a_{N+1}$ and the arc $A_N$ to obtain an arc $A_{N+1}$ in $G_{N+1}$.  Next,
apply Lemma \ref{arcs-in-graphs} to $g_{N+1}$, $a_{N+1}$, $a_{N+2}$ and the arc $A_{N+1}$ to obtain an arc $A_{N+2}$ in $G_{N+2}$.  Continue in the same way.
Finally, for each $n\geq N$ we have an arc $A_n$ with an end-vertex equal to $a_n$, such that $g_n|_{A_{n+1}}$ is monotone and onto $A_n$. Then the arc obtained as the inverse limit of $\{A_n,g_n|_{A_{n+1}}\}$ has ${\bf a}$ as one of the end-vertices and the second end-vertex is within $\varepsilon$ from ${\bf b}$.
\end{proof}

The theorem above together with Lemma \ref{arc-proj-to-arc} give the following corollary.
\begin{corollary}
The continuum $|\mathbb G|$ has all arc-components dense.
\end{corollary}

Our next goal is to show that the pseudo-arc and the universal pseudo-solenoid can be embedded in $|\mathbb G|$. We begin with a definition and an embedding theorem.

We say that a family $\mathcal P$ of finite connected graphs and a subfamily of epimorphisms between them has the {\em lifting property} if the following holds:

For all finite graphs $G$, $H$ and  
 a confluent epimorphism $f\colon G\to H$, if  
 $A\subseteq H$ is in $\mathcal P$, then
there is 
  $B\subseteq G$ in $\mathcal P$ such that $f|_B\colon B\to A$ is in $\mathcal P$.

\begin{example}

\begin{enumerate}
\item The family of arcs with monotone epimorhisms has the lifting property. (Lemma \ref{arcs-in-graphs})

\item The family of cycles with confluent epimorhisms 
    has the lifting property. (Lemma \ref{raz})
    \end{enumerate}
\end{example}  

\begin{theorem}\label{lemma lift}
\blu{Let $\mathcal{P}$ be a family of finite connected graphs with epimorphisms, which has the lifting property.}
Let $\{B_n, \beta_n\}$  be an inverse sequence \blu{ with $B_n\in \mathcal{P}$ and $\beta_n$ being epimorphisms, for all $n$,} with the following property. For every  $D\in\mathcal{P}$, $k>0$, and    $g\colon D\to B_k$ in 
$\mathcal{P}$,
there is $l>k$ and epimorphism (not necessarily in $\mathcal{P}$)  $f\colon B_l\to D$  with $g\circ f=\beta^l_k$. Then the inverse limit of $\{B_n, \beta_n\}$ can be embedded in $\mathbb{G}$.
\end{theorem}

\begin{proof}
Let $\{F_n,\alpha_n\}$ be a Fra\"{\i}ss\'e sequence for $\mathcal G$ and  $\{B_n, \beta_n\}$ be the inverse sequence in $\mathcal{P}$.
We construct $(n_k)$ and $D_k\subseteq F_{n_k}$ such that the inverse limit of $\{D_n, \alpha_n|_{D_{n+1}}\}$ is isomorphic to the inverse limit of $\{B_n, \beta_n\}$.

We may assume that $F_1=B_1$. Let $D_1=D_2=F_1$ and take $f_1\colon B_1\to D_1$ and $g_1\colon D_2\to B_1$ to be identity maps.
Take $n_1=m_1=1=n_2$. 

Suppose that we have $n_1<\ldots< n_{k+1}$, $m_1<\ldots< m_k$, and we have epimorphisms $f_i\colon B_{m_i}\to D_i$ (not necessarily confluent) and  epimorphisms
$g_i\colon  D_{i+1}\to B_{m_i}$ from  $\mathcal{P}$, for $i=1,2,\ldots, k$. We also have  $D_i\subseteq F_{n_i}$, where $D_i$ is in $\mathcal{P}$, such that 
$g_{i-1}\circ f_i = \beta^{m_{i}}_{m_{i-1}}$ and $f_i\circ g_i=\alpha^{n_{i+1}}_{n_i}|_{D_{i+1}}$, for
$i=1,\ldots, k$.

We now obtain $n_{k+1}<n_{k+2}, m_k<m_{k+1}, f_{k+1}, g_{k+1}$, and $D_{k+2}$.
Using the property of $\{B_n, \beta_n\}$ we find $m_{k+1}$ and $f_{k+1}$ such that 
$f_{k+1}\colon B_{m_{k+1}}\to D_{k+1}\subseteq F_{n_{k+1}}$ and
$g_{k}\circ f_{k+1} = \beta^{m_{k+1}}_{m_{k}}$. 

We claim that we can extend the epimorphism $f_{k+1}\colon B_{m_{k+1}}\to D_{k+1}$, which may be not confluent, 
to a confluent epimorphism $F\colon A\to F_{n_{k+1}}$, for some connected graph $A$. We let
$ V(A)$ to be the disjoint union of $V( B_{m_{k+1}})$ and $V(F_{n_{k+1}})$.
Let 
\begin{align*}
E(A)=&\{\langle x, y\rangle \colon \langle x, y\rangle \in E(F_{n_{k+1}}) \cup E(B_{m_{k+1}}) \text{ or } \\
&(x\in B_{m_{k+1}}, y\in F_{n_{k+1}}, \langle f_{k+1}(x),y\rangle\in E(F_{n_{k+1}}) )\}.
\end{align*}
The map $F$ defined by $F|_{B_{m_{k+1}}}=f_{k+1}$ and equal to the identity otherwise is a 
confluent epimorphism, see Theorem~\ref{confluent-edges}, extending $f_{k+1}$. 

Finally use that  $\{F_n,\alpha_n\}$ is the Fra\"{\i}ss\'e sequence to find $n_{k+2}$ and $G\colon F_{n_{k+2}}\to A$ such that
$F\circ G= \alpha^{n_{k+2}}_{n_{k+1}}$. Let $D_{k+2}\subseteq F_{n_{k+2}}$ be in $\mathcal{P}$ such that 
$G(D_{k+2})=B_{m_{k+1}}$ and $G|_{D_{k+2}}$ is in  $\mathcal{P}$, which exists since $\mathcal{P}$ has the lifting property.  We let $g_{k+1}=G|_{D_{k+2}}$.

Note that the inverse limit of $\{D_n, \alpha_n|_{D_{n+1}}\}$ is isomorphic to the inverse limit of $\{B_n, \beta_n\}$.
Since the inverse limit of $\{F_{n_i}, \alpha_{n_i}^{n_{i+1}}\}$ is isomorphic to $\mathbb{G}$, we get the required embedding of 
the inverse limit of $\{B_n, \beta_n\}$ in $\mathbb{G}$.

\end{proof}

The \textit{pseudo-arc} is the unique hereditarily indecomposable chainable continuum.
In \cite{Pseudo} it was shown that the topological realization of the projective Fra\"\i ss\'e limit of the family of finite linear graphs, which are called arcs in this article, with epimorphisms which were not necessarily confluent, is the pseudo-arc. Our next result shows, somewhat surprisingly, that $|\mathbb G|$ contains a pseudo-arc.

\begin{theorem}\label{parc-embed}
The pseudo-arc can be embedded in $|\mathbb{G}|$.
\end{theorem}

\begin{proof}
Apply Irwin-Solecki  \cite{Pseudo}, where it is shown that the pseudo-arc is the topological realization of the projective Fra\"{\i}ss\'e family of finite arcs.
Apply Lemma \ref{arcs-in-graphs} to any Fra\"{\i}ss\'e sequence $\{I_n,\beta_n\}$  for that Fra\"{\i}ss\'e family and use Theorem \ref{lemma lift} and then Lemma \ref{quotient-emb}.
\end{proof}

 A \textit{pseudo-solenoid} is a non-chainable circularly chainable hereditary indecomposable continuum. The \textit{universal pseudo-solenoid} is a pseudo-solenoid such that there is a continuous surjection from it onto any non-chainable circularly chainable continuum, see Rogers~\cite{RogersII}, in particular, onto any pseudo-solenoid.

\begin{theorem}\label{pseu-embed}
The universal pseudo-solenoid can be embedded in $|\mathbb{G}|$.
\end{theorem}

\begin{proof}
    Irwin \cite[Section 4]{PhD-Irwin} showed that the universal pseudo-solenoid can be obtained as the topological realization of the projective \fra\ limit of the projective \fra\ family of cycles with epimorphisms of positive degree. 
    Apply Lemma \ref{raz} to any Fra\"{\i}ss\'e sequence $\{C_n,\gamma_n\}$  for that Fra\"{\i}ss\'e family and use Theorem \ref{lemma lift} and then Lemma \ref{quotient-emb}.
    
\end{proof}

We next want to show the following result.

\begin{theorem}\label{Cantor-fan}
Each point of the continuum $|\mathbb G|$ is the top of the Cantor fan embedded in $|\mathbb G|$.
\end{theorem}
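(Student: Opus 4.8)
The plan is to fix an arbitrary point $x\in|\mathbb G|$, choose $\mathbf a=(a_0,a_1,\dots)\in\mathbb G$ with $\pi(\mathbf a)=x$, write $\mathbb G=\varprojlim\{G_n,g_n\}$ along a Fra\"{\i}ss\'{e} sequence, and build inside the finite graphs a nested system of sub-fans centered at the coordinates $a_n$ whose inverse limit is the Cantor fan with top $\mathbf a$. Concretely, I would construct finite trees $S_n\subseteq G_n$, each a fan with branch vertex (top) $a_n$ and finitely many spokes, subject to the requirements that $g_n(a_{n+1})=a_n$, that $g_n|_{S_{n+1}}$ maps $S_{n+1}$ onto $S_n$ sending $a_{n+1}$ to $a_n$ and each spoke of $S_{n+1}$ monotonically onto a spoke of $S_n$, and that the number of spokes grows so that each spoke of $S_n$ is eventually covered by at least two spokes at a later level.

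Two mechanisms drive the construction. First, \emph{lifting}: given $S_n$ and the vertex $a_{n+1}$ over $a_n$, applying Lemma~\ref{arcs-in-graphs} to $g_n$ and to each spoke of $S_n$ separately produces, for each spoke, an arc in $G_{n+1}$ with end-vertex $a_{n+1}$ mapping monotonically onto that spoke; taking the union of these lifts yields a candidate $S_{n+1}$, and because each spoke-restriction is monotone the corresponding spoke of the limit will be a genuine arc. Second, \emph{branching}: to force the number of spokes to increase I would use the Fra\"{\i}ss\'{e} sequence property of Proposition~\ref{B-CThm}. Given $S_n\subseteq G_n$, I form an auxiliary finite connected graph $H$ by grafting onto $G_n$, at the vertex $a_n$, a fresh isomorphic copy of each spoke of $S_n$, and I let $h\colon H\to G_n$ be the identity on $G_n$ together with the obvious spoke-identification on the grafted copies; one checks via Theorem~\ref{confluent-edges} that $h$ is confluent. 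The Fra\"{\i}ss\'{e} property then furnishes $m>n$ and a confluent epimorphism $G\colon G_m\to H$ with $h\circ G=g_n^m$; selecting a vertex $a_m$ over $a_n$ and lifting the doubled fan of $H$ through $G$ by Lemma~\ref{arcs-in-graphs} produces $S_m\subseteq G_m$ whose spokes map onto $S_n$ with each original spoke covered at least twice. Interleaving lifting and branching cofinally produces the whole sequence $\{S_n\}$.

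The inverse limit $S=\varprojlim\{S_n,g_n|_{S_{n+1}}\}$ is then a subcontinuum of $\mathbb G$. Since each $S_n$ is a fan with single branch vertex $a_n$ and the bonding maps send center to center and spokes to spokes, away from the center every thread locally lives in a single spoke, so the only branch point of $S$ is $\mathbf a$; hence $S$ is a smooth fan with top $\mathbf a$. Its set of endpoints is the inverse limit of the finite leaf sets of the $S_n$ under surjective maps whose fibers have cardinality at least two infinitely often, hence a Cantor set. By the classical characterization of the Cantor fan as the unique smooth fan whose endpoint set is a Cantor set, $S$ is homeomorphic to the Cantor fan. Finally I would pass to $|\mathbb G|$: since $\card\pi^{-1}(\cdot)\le 2$ by Remark~\ref{card}, and the vertices of $S$ can be chosen so that $\pi|_S$ is injective, $\pi(S)$ is a Cantor fan in $|\mathbb G|$ with top $\pi(\mathbf a)=x$. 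As $\pi$ is onto and $x$ was arbitrary, every point of $|\mathbb G|$ is the top of a Cantor fan.

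The step I expect to be the main obstacle is guaranteeing that the lifted spokes are pairwise disjoint off the center, so that each $S_n$ is a genuine fan and, more importantly, so that distinct threads of $S$ meet only at $\mathbf a$ and the endpoint set is genuinely a Cantor set rather than collapsing. Lemma~\ref{arcs-in-graphs} controls one spoke at a time but says nothing about how several lifts interact, so the combinatorial bookkeeping needed to keep the spokes separated, while also keeping the branching happening cofinally so that the limit is neither a finite fan nor degenerate, is the delicate part; this is also where the confluence of the grafting map $h$ and the correct choice of the vertices $a_m$ over $a_n$ must be handled with care.
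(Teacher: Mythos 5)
Your overall strategy --- realizing the Cantor fan as an inverse limit of finite fans centered over the coordinates of $\mathbf a$, branching by passing to an auxiliary graph and invoking the Fra\"{\i}ss\'e property, lifting spokes via Lemma~\ref{arcs-in-graphs}, and concluding by the characterization of the Cantor fan as a smooth fan with Cantor endpoint set --- is the same as the paper's. However, your branching step has a concrete flaw: the grafting map $h\colon H\to G_n$ is in general \emph{not} confluent. If $\sigma$ is a spoke of $S_n$, $u\in V(\sigma)\setminus\{a_n\}$, and $\langle u,v\rangle\in E(G_n)$ with $v\notin V(\sigma)$, then the copy $u'$ of $u$ in the grafted spoke is a one-point component of $h^{-1}(\langle u,v\rangle)$, because $u'$ is adjacent in $H$ only to vertices of the grafted copy; this violates condition~(3) of Theorem~\ref{confluent-edges}, so Proposition~\ref{B-CThm} cannot be applied to $h$. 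The paper's device that repairs exactly this is Definition~\ref{doubling-def}: one doubles the \emph{entire} graph $F_{n_{m-1}}$ at the point $a_{n_{m-1}}$, forming $K_m=\Delta(F_{n_{m-1}},a_{n_{m-1}})$, and the projection $\delta_m$ is genuinely confluent because every component of the preimage of an edge carries a full copy of that edge.

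The second difficulty, which you flag yourself but leave unresolved --- keeping the several lifted spokes disjoint off the center --- is also what the whole-graph doubling buys: in $K_m$ the two copies of any arc of $F_{n_{m-1}}$ meet only at the identified point, which is precisely $f_m(a_{n_m})$, so the needed intersection condition is arranged at the level of the doubled graphs $K_i$ rather than inside the original $F_n$'s. The paper then takes the inverse limit along the interleaved sequence $\{K_i,\beta_i\}$, which is still isomorphic to $\mathbb G$, indexes the limit arcs by infinite $0,1$-sequences, and invokes \cite[Corollary 5.16]{WJC-RPR-Fraisse} to see that their union is a smooth dendroid, hence a smooth fan with top $\mathbf a$ whose endpoint set is closed without isolated points. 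A final minor point: you do not need to choose $S$ so that $\pi|_S$ is injective; it suffices, as in the paper, that the image $\pi(L)$ is a smooth fan whose endpoint set is closed and has no isolated points.
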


First, we recall a definition from continuum theory.

\begin{definition}
A continuum homeomorphic to the cone over the Cantor set is called the {\em Cantor fan}, that is, it is homeomorphic to the quotient space $C \times [0,1]/\sim$ where $C$ is the Cantor set and the points $(x,1)$ are identified.  We will call the point $(x,1)$ the {\em top} of the fan. Note that this top point is often called the vertex of the fan but we want to only use the term vertex for elements of graphs. The Cantor fan $X$ can be characterized as a smooth fan whose set of endpoints is  homeomorphic to the Cantor set  (see Theorem 1 in \cite{Cantor-fan}).
\end{definition}

\begin{definition}\label{doubling-def}
Given $G\in {\mathcal G}$ and a vertex $p\in G$ let $$\Delta(G,p)=G\times \{0,1\}/ \left(\{p\}\times \{0,1\}\right)$$ and $\delta\colon \Delta(G,p)\to G$ be just the projection $\delta(x,i)=x$. Observe that 
$\delta$ is a confluent epimorphism.

\end{definition}

\begin{proof} (Proof of Theorem \ref{Cantor-fan}).

 Let $\{F_i,\alpha_i\}$ be a Fra\"{\i}ss\'e sequence for $\mathcal G$. 
Choose ${\bf a}\in\mathbb G$ and write ${\bf a}=(a_i) $. Let $\pi\colon \mathbb G\to |\mathbb G|$ be the quotient map.
 We are going to construct the Cantor fan in $ |\mathbb G|$ with the top $\pi({\bf a})$. 
 
Let $K_0$ be a graph in $\mathcal G$ containing at least two vertices.
Using that $\{F_i,\alpha_i\}$ is a Fra\"{\i}ss\'e sequence, take $n_0$ and a confluent epimorphism $f_0\colon F_{n_0}\to K_0$. 
\blu{Suppose that we have already constructed $f_i\colon F_{n_i}\to K_i$, for $i\leq m$, and we have $\delta_i\colon K_{i+1}\to F_{n_i}$,
 for $i<m$, such that $ \delta_{i-1}\circ f_i=\alpha^{n_i}_{n_{i-1}} $ for $i\leq m$.
 Let $K_{m+1}=\Delta(F_{n_{m}},a_{n_{m}})$ and let  $\delta_{m}\colon K_{m+1}\to F_{n_m}$ be the projection map of Definition \ref{doubling-def}, that we are subscripting to help keep track of which graph it is operating on.  
 By Proposition  \ref{B-CThm},  there is $n_{m+1}$ and a confluent epimorphism $f_{m+1}\colon F_{n_{m+1}}\to K_{m+1}$ such that 
 $ \delta_m\circ f_{m+1}=\alpha^{n_{m+1}}_{n_{m}} $. }
 Let $\beta_i\colon K_{i+1} \to K_i$ be $\beta_i=f_i\circ \delta_{i}$. See Diagram (D3).

\begin{equation}\tag{D3}
\begin{tikzcd}
F_{n_i}\arrow[d,swap,"f_i"]&\arrow[l,swap,"\alpha^{n_{i+1}}_{n_i}"]
F_{n_{i+1}}\arrow[d,dotted,"f_{i+1}"]\\
K_i&K_{i+1}\arrow[l,"\beta_i"]
\end{tikzcd}
\end{equation}

All maps in Diagram (D4) are confluent epimorphisms and $\iLim\{K_i,\beta_i\}$ is isomorphic to $\mathbb G$.

\begin{equation}\tag{D4}
\begin{tikzcd}
K_0&\arrow{l}[swap]{f_0}F_{n_0}&K_1=\Delta(F_{n_0},a_{n_0}) \arrow[ll,bend right,swap,"\beta_0"]\arrow[l,swap,"\delta_{0}"]&F_{n_1}\arrow{l}[swap]{f_1}\arrow[ll,bend left,"\alpha^{n_1}_{n_0}"]&K_2= \arrow[ll,bend right,swap,"\beta_1"] \arrow[l,swap,"\delta_{1}"]\dots
\end{tikzcd}
\end{equation}

For any 0,1-sequence  $s$ we use $\len(s)$ to denote 
the length of the sequence $s$ and, for $n<\len(s)$,  $s|_n$ to denote the sequence consisting of the first $n$ entries of $s$.

Now we are going to construct arcs in the graphs $K_n$ whose inverse limits are arcs in $\mathbb G$.
Let $A$ with $|A|\geq 2$ be an arc in $K_0$ such that  $f_0(a_{n_0})$ is an end-vertex of $A$.
Let $A_\emptyset =A$; by  Lemma \ref{arcs-in-graphs} there is an arc 
$B_\emptyset$ in $F_{n_0}$ such that $f_0(B_\emptyset)=A_\emptyset$ and $a_{n_0}$ is an end-vertex of $B_\emptyset$.
By the construction of $K_1$ there are two arcs $A_0$ and $A_1$ in $K_1$
with $A_0\cap A_1=f_1(a_{n_1})$ and $\delta_{0}(A_0)=\delta_{0}(A_1)=B_\emptyset$; consequently 
$\beta_0(A_0)=\beta_0(A_1)=A_\emptyset$. By  Lemma \ref{arcs-in-graphs}, there are  arcs 
$B_k$, $k=0,1$,  in $F_{n_1}$ such that $f_1(B_k)=A_k$ and $a_{n_1}$ is an end-vertex of each $B_k$.

Using this procedure repeatedly, we may construct, for any finite 0,1-sequence $s$ an arc $A_s$ in $K_{\len(s)}$ such that the following conditions are 
satisfied:
\begin{enumerate}
    \item $A_\emptyset =A$;
    \item if $\len(s)=k+1$, then $A_{s|_k}=\beta_k(A_s)$ and $\beta_k|_{A_s}$ is monotone;
    \item if $\len(s)=\len(s')=k$ and $s\ne s'$, then $A_{s}\cap A_{s'}=f_{k}(a_{n_k})$. 
\end{enumerate}

For an infinite 0,1-sequence $s$, we define $s_k=s|_k$ and 
${\bf A}_s=\iLim \{A_{s_k}, \beta_k|_{A_{s_{k+1}}}\}$.  Then the union
$${\bf L}=\bigcup \{{\bf A}_s\colon s \text{ is an infinite 0,1-sequence}\}$$ 
is, by \cite[Proposition 5.12]{WJC-RPR-Fraisse}, a smooth dendroid in $\mathbb G$.  
Indeed,  $D_k=\bigcup\{A_s\colon \len(s)=k\}  $
is the  disjoint union of a finite number of arcs identified at the vertex $f_k(a_{n_k})$,  $\beta_k|_{ D_{k+1}}$ maps end-vertices to end-vertices, and 
${\bf L}=\iLim \{ D_k, \beta_k|_{ D_{k+1}}\}$.
The topological graph ${\bf L}$ with the root ${\bf a}$ is a fan 
(Definition \ref{def:fan}). 
The set of end-vertices of ${\bf L}$ is closed and has no isolated vertices.  

It follows that the set $\pi(\bf L)$ is a smooth fan in the topological sense, with the top $\pi({\bf a})$. The set of endpoints is closed and has no isolated points, therefore it is homeomorphic to the Cantor set.  Thus $\pi({\bf L})$ is homeomorphic to the Cantor fan. 
\end{proof}

\section{Pointwise self-homeomorphic graphs}

\begin{definition}
A topological space $X$ is called {\em pointwise self-homeomorphic at a point $x\in X$} if for any neighborhood $U$ of $x$ there is a set $V$, not necessarily a neighborhood, such that $x\in V \subseteq U$ and $V$ is homeomorphic to $X$.  It is called {\em pointwise self-homeomorphic} if it is pointwise self-homeomorphic at each of its points, see \cite{Self-homeo}. A topological graph is {\em pointwise self-isomorphic} if in the above we consider topological graphs and instead of requiring a homeomorphism we require a topological graph isomorphism.
\end{definition}

In this section, show that $\mathbb G$ is pointwise self-isomorphic and use this to show that $|\mathbb G|$ is pointwise self-homeomorphic.

\begin{lemma}\label{extension}
Suppose $U$ and $W$ are finite connected graphs and $f\colon W\to U$ is a confluent epimorphism, and $U\subseteq G$, where $G$ is again a finite connected graph.  Then there is a finite connected graph $H$ such that $W\subseteq H$ and a confluent epimorphism $f^*\colon H\to G$ satisfying $f^*|_W=f$ and $W=(f^*)^{-1}(U)$.
\end{lemma}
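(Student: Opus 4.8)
The plan is to build $H$ by adjoining to $W$ one fresh vertex for every vertex of $G$ lying outside $U$, and wiring these new vertices into $W$ in the unique way that forces confluence across the boundary of $U$. Concretely, I would set $V(H)=V(W)\sqcup(V(G)\setminus V(U))$ and define $f^*$ to be $f$ on $V(W)$ and the identity on $V(G)\setminus V(U)$; then $f^*|_W=f$ and $(f^*)^{-1}(U)=W$ are immediate, since the only vertices of $H$ sent into $U$ are those of $W$. For the edges I would take the reflexive, symmetric closure of $E(W)$, together with the edges of $G$ joining two vertices of $V(G)\setminus V(U)$, together with all \emph{cross edges} $\langle w,v\rangle$ with $w\in V(W)$, $v\in V(G)\setminus V(U)$, and $\langle f(w),v\rangle\in E(G)$. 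By construction $f^*$ carries edges to edges, and since no cross edge has both endpoints in $V(W)$, the subgraph of $H$ induced on $V(W)$ is exactly $W$, so $W=(f^*)^{-1}(U)$ holds as graphs and not merely on vertex sets.

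Next I would verify that $f^*$ is an epimorphism and that $H$ is connected. Surjectivity on vertices is clear. For surjectivity on edges, I would use that $U$ is a full subgraph of $G$, so that every edge of $G$ with both endpoints in $U$ is an edge of $U$: such an edge is then hit because $f$ is already an epimorphism onto $U$; an edge inside $V(G)\setminus V(U)$ is hit by the identity part; and an edge $\langle u,v\rangle$ of $G$ with $u\in U$, $v\notin U$ is hit by the cross edge $\langle w,v\rangle$ for any $w$ with $f(w)=u$. Connectedness uses that $G$ is connected: for $v\in V(G)\setminus V(U)$, a shortest $G$-path from $v$ to $U$ stays in $V(G)\setminus V(U)$ until its final edge, which runs from some $v'\in V(G)\setminus V(U)$ into $U$ and therefore lifts to a cross edge joining the new part of $H$ to $W$; as $W$ is connected, all of $H$ is connected.

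The heart of the argument, and the step I expect to be the main obstacle, is confluence of $f^*$, which I would establish through condition (2) of Theorem \ref{confluent-edges} applied to $f^*\colon H\to G$. Fix a non-degenerate edge $P=\langle p,q\rangle\in E(G)$ and a vertex $a\in V(H)$ with $f^*(a)\in\{p,q\}$, and split into three cases by the position of $p,q$ relative to $U$. If $p,q\in U$, then $a\in W$ and $P\in E(U)$, so the required edge $E$ and connected set $R$ are supplied verbatim by confluence of $f$, since $E(W)\subseteq E(H)$ and $f^*|_W=f$. If $p,q\notin U$, then $a\in\{p,q\}$ lies in the identity part and one takes $R=\{a\}$ together with $E=P$ itself. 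The delicate mixed case $p\in U$, $q\notin U$ is exactly what the cross edges are designed for: if $f^*(a)=p$, I would take $R$ to be the component of $a$ in $(f^*)^{-1}(p)=f^{-1}(p)\subseteq V(W)$, so that every $w\in R$ satisfies $f(w)=p$ and hence $\langle w,q\rangle$ is automatically a cross edge mapping onto $P$ and meeting $R$; if instead $f^*(a)=q$, I would take $R=\{q\}$ and $E=\langle w,q\rangle$ for any $w\in f^{-1}(p)$. In every case condition (2) holds, so $f^*$ is confluent.

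The genuine work is thus confined to the confluence verification, and within it to the mixed case; the two ``pure'' cases merely inherit confluence of $f$ and of the identity, respectively. The technical bookkeeping I anticipate is maintaining reflexivity and symmetry of $E(H)$ (handled by including loops and symmetrizing, as in the construction used earlier for the pseudo-arc embedding) and confirming that no cross edge accidentally lands inside $V(W)$, which is precisely what keeps $(f^*)^{-1}(U)$ equal to $W$ rather than something larger.
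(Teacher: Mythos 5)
Your construction is exactly the one in the paper: $V(H)=V(W)\sqcup V(G\setminus U)$, the edge set is the symmetrization of $E(W)\cup E(G\setminus U)$ together with the cross edges $\langle w,v\rangle$ for $\langle f(w),v\rangle\in E(G)$, and $f^*$ is $f$ on $W$ and the identity elsewhere. The paper stops after giving the construction, whereas you additionally carry out the confluence check via condition (2) of Theorem \ref{confluent-edges}; that verification is correct, so the proposal matches the paper's proof.
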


\begin{proof}
Take $V(H)$ to be the disjoint union of $V(W)$ and $V(G\setminus U)$ and let 
\begin{align*}
E(H)= &\{\langle x, y\rangle \colon \langle x, y\rangle \in E(W) \cup E(G\setminus U) \text{ or }\\
&( x\in W, y\in G\setminus U, \langle f(x),y\rangle\in E(G))\}.
\end{align*}

Then $H$ is connected.  
 Let $f^*$ be equal to $f$ on $W$ and to the identity on $G\setminus U$. For any edge in $G$ there are three possibilities, its end-vertices are both in $U$, both in $G\setminus U$, or one is in $U$ and one is in $G\setminus U$. In all three cases, Condition~(3) of Theorem~\ref{confluent-edges} is satisfied, so $f^*$ is confluent.
\end{proof}

\begin{theorem}\label{small-copies}
Let $h\colon \mathbb{G} \to G$ be a confluent epimorphism onto a finite connected graph and $x\in G$. Then any component of $h^{-1}(x)$ is isomorphic with $\mathbb{G}$.
\end{theorem}

\begin{proof}
Let $h\colon \mathbb{G} \to G$ and $x$ be as in the statement of the theorem. 
Let $\{F_n,\alpha_n\}$ be a Fra\"{\i}ss\'e sequence for $\mathcal G$. 
Let $k$ and a confluent epimorphism $h_0\colon F_k\to G$ be such that $h=h_0\circ \alpha^\infty_k$,    let ${\bf C}$ be a component of $h^{-1}(x)$, and write
 $C_i=\alpha^\infty_i({\bf C})$.
Note that for $i\geq k$, $C_i$ is a component of $(\alpha_k^i)^{-1} h_0^{-1}(x)$.
By Proposition \ref{confluent-restrictions},  $\bar{\alpha}_m^n=\alpha^n_m |_{C_n}$ are confluent whenever $m\leq n$. 
Using Proposition \ref{B-CThm}, we show that $\{C_n, \bar{\alpha}_n\}$ is a Fra\"{\i}ss\'e sequence for $\mathcal G$.

For that, pick $l$ and a confluent epimorphism $f\colon A\to C_l$, where $A\in\mathcal{G}$. Apply Lemma \ref{extension} to $U=C_l$, $W=A$, and $G=F_l$. Obtain $H\in\mathcal{G}$ and a confluent epimorphism $f^*\colon H\to G$, which extends $f$ and $A=(f^*)^{-1}(C_l)$.
Since $\{F_n,\alpha_n\}$ is a Fra\"{\i}ss\'e sequence for $\mathcal G$, we get $n$ and a confluent epimorphism $g\colon F_n\to H$ such that
$\alpha^n_l=(f^*)\circ g$. By confluence of $g$, since $A$ is connected, and using $A=(f^*)^{-1}(C_l)$, we have that 
$\bar{g}=g|_{C_n}$ is
onto $A$ and, by Proposition \ref{confluent-restrictions}, it is confluent. Clearly $\bar{\alpha}^n_l=f \bar{g}$.

\end{proof}

\begin{corollary}\label{pointwise}
The \F limit ${\mathbb G}$ is pointwise self-isomorphic.
\end{corollary}
\begin{proof}
Let ${\bf a}\in{\mathbb G}$ and $\varepsilon >0$. Then, by condition (3) of Theorem \ref{limit}, 
there is a graph $G\in \mathcal G$ and a 
confluent $\varepsilon$-epimorphism 
$f_G\colon \mathbb G\to G$. 
 By Theorem \ref{small-copies} the component of $f_G^{-1}(f_G({\bf a}))$ that contains ${\bf a}$ is a subgraph of ${\mathbb G}$
isomorphic to ${\mathbb G}$.

\end{proof}

\begin{observation}
If ${\bf G}$ is a pointwise self-isomorphic topological graph with a transitive edge relation, then its topological realization $|{\bf G}|$ is a pointwise self-homeomorphic compactum.
\end{observation}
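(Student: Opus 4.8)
The plan is to push the pointwise self-isomorphism of $G$ through the quotient map $\pi\colon G\to|G|$. Fix a point $x\in|G|$ and an open neighborhood $O$ of $x$ in $|G|$. Choose a vertex $a\in\pi^{-1}(x)\subseteq V(G)$; since $\pi$ is continuous and surjective, $\pi^{-1}(O)$ is an open neighborhood of $a$ in $V(G)$. Applying the hypothesis that $G$ is pointwise self-isomorphic, at the vertex $a$ with the neighborhood $\pi^{-1}(O)$, I obtain a subgraph $V$ with $a\in V\subseteq\pi^{-1}(O)$ that is isomorphic, as a topological graph, to $G$. The candidate small copy of $|G|$ inside $O$ will be $\pi(V)$.

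First I would record the easy containments. Since $a\in V$ we have $x=\pi(a)\in\pi(V)$, and since $V\subseteq\pi^{-1}(O)$ and $\pi$ is surjective, $\pi(V)\subseteq\pi(\pi^{-1}(O))=O$. Thus $x\in\pi(V)\subseteq O$, and it remains only to see that $\pi(V)$ is homeomorphic to $|G|$.

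The heart of the argument is to identify $\pi(V)$ with the topological realization $|V|=V/E(V)$ of the subgraph $V$. As a subgraph, $V$ carries the induced edge relation $E(V)=E(G)\cap(V\times V)$; in particular $E(V)$ is again transitive, so $|V|$ is defined. For $u,u'\in V$ one has $\langle u,u'\rangle\in E(G)$ if and only if $\langle u,u'\rangle\in E(V)$, so two vertices of $V$ are collapsed by $\pi$ exactly when they are collapsed in $|V|$. Hence $\pi|_V$ factors through $|V|$ and induces a continuous bijection $|V|\to\pi(V)$. As $V$ is isomorphic to $G$ its vertex space is compact, so $|V|$ is compact; since $|G|$ is a metrizable compactum and hence Hausdorff, a continuous bijection from the compact space $|V|$ onto $\pi(V)\subseteq|G|$ is a homeomorphism onto its image. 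Finally, the topological-graph isomorphism $V\cong G$ induces a homeomorphism $|V|\cong|G|$ on realizations, and composing gives $\pi(V)\cong|G|$, as required.

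The main obstacle, and the only place needing care, is the identification $\pi(V)\cong|V|$: one must verify that restricting the global quotient by $E(G)$ to the subset $V$ produces exactly the realization of $V$ as its own prespace. This is where it is essential that $V$ be a subgraph carrying the induced edge relation, so that no extra identifications occur, and where compactness of $|V|$ together with the Hausdorff property of $|G|$ upgrades the continuous bijection to a homeomorphism. Everything else is a routine diagram of containments and continuity.
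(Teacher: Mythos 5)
Your argument is correct and is exactly the intended one: the paper states this as an observation without proof, and your expansion (push the small isomorphic copy $V$ through $\pi$, identify $\pi(V)$ with $|V|$ using that $E(V)$ is the relation induced from $E(G)$, and upgrade the resulting continuous bijection to a homeomorphism via compactness of $|V|$ and Hausdorffness of $|G|$) supplies precisely the missing details. You also correctly isolate the one delicate point — that $V$ must carry the induced edge relation so that $\pi$ makes no extra identifications on $V$ — which indeed holds for the copies produced in Theorem \ref{small-copies}, where this observation is applied.
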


\begin{corollary}
The continuum $|\mathbb G|$ is pointwise self-homeomorphic.
\end{corollary}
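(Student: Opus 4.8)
The plan is to deduce the corollary immediately from the preceding Observation together with the theorem established just above it. The theorem states that $\mathbb G$ is pointwise self-isomorphic, and the Theorem in the previous section establishes that $\mathbb G$ has a transitive edge relation; these are exactly the two hypotheses of the Observation. So the entire content of the corollary is a single application of the Observation to $G = \mathbb G$, yielding that $|\mathbb G|$ is pointwise self-homeomorphic, and since $|\mathbb G|$ is already known to be a continuum, the result follows.

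The substantive mathematical work therefore sits in the Observation, not the corollary, so I would expect the corollary's proof to be essentially one sentence. If I were instead proving the Observation directly, the approach would be as follows. Fix a point $x \in |G|$ and a neighborhood of $x$; lift $x$ through the quotient map $\pi$ to a vertex ${\bf a} \in \pi^{-1}(x)$ (recall $\card(\pi^{-1}(x)) \le 2$), and pull the neighborhood of $x$ back to a neighborhood $U$ of ${\bf a}$ in $V(G)$. Pointwise self-isomorphism at ${\bf a}$ supplies a subgraph $G_{\bf a}$ with ${\bf a} \in G_{\bf a} \subseteq U$ and a graph isomorphism $\varphi\colon G \to G_{\bf a}$.

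The key step is then to pass from the graph isomorphism $\varphi$ to a homeomorphism of topological realizations. Since $\varphi$ preserves the edge relation in both directions and each edge relation is transitive, $\varphi$ descends to a well-defined bijection $|\varphi|\colon |G| \to |G_{\bf a}|$ on the quotients, where $|G_{\bf a}|$ is naturally a subspace of $|G|$ (here one uses that the edge relation restricted to the subgraph agrees with the ambient one, so $\pi$ restricted to $V(G_{\bf a})$ realizes $|G_{\bf a}|$ as a closed subset of $|G|$). A standard check via the quotient-map universal property, using that $\varphi$ and $\varphi^{-1}$ are both continuous on compact Hausdorff vertex spaces, shows $|\varphi|$ is a homeomorphism. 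Setting $V = |G_{\bf a}| = \pi(V(G_{\bf a}))$ gives a copy of $|G|$ inside the chosen neighborhood of $x$ containing $x$.

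The main obstacle, and the only point requiring genuine care, is verifying that $|\varphi|$ is well-defined and continuous, i.e. that the graph isomorphism respects the quotienting by the edge relation compatibly on both sides and that the subspace topology on $\pi(V(G_{\bf a}))$ coincides with its intrinsic realization topology. Both reduce to the transitivity of $E(G)$ (equivalently to the prespace structure) and to the elementary fact that a continuous bijection between compact Hausdorff spaces is a homeomorphism; no deeper input is needed. For the corollary itself, however, all of this is packaged inside the Observation, so the proof reduces to invoking it.
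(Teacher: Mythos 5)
Your proposal matches the paper exactly: the corollary is an immediate application of the preceding Observation to $G=\mathbb G$, whose hypotheses (pointwise self-isomorphism and transitivity of the edge relation) are supplied by the theorem just above and the earlier transitivity result. The additional sketch of how the Observation itself is proved is sound but not needed for the corollary, which the paper leaves as a one-line deduction.
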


\begin{corollary}
Let $X$ be a continuum, which embeds in $|\mathbb G|$. Then for any nonempty open set $U\subseteq |\mathbb G|$, $X$ embeds in $U$.
\end{corollary}

\section{ Hereditarily  unicoherence} \label{sec-uni}
The goal of this section is to show that the graph $\mathbb G$ and, consequently, its topological realization are hereditarily unicoherent (see Definition \ref{her-uni}).
We start with a general observation.

\begin{lemma}\label{herunidown}
Let $\bf F$ be a hereditarily unicoherent topological graph having a transitive edge relation. Then $|\bf F|$ is hereditarily unicoherent. In fact, if $|\bf F|$ is not hereditarily unicoherent, then $\bf F$ is not subgraph  hereditarily unicoherent. 
\end{lemma}

\begin{proof}
Suppose $|\bf F|$ is not hereditarily unicoherent.  Then there exist subcontinua $H$ and $K$ in $|\bf F|$ such that $H \cap K$ is disconnected. So $H\cap K= C\cup D$ where $C \not = \emptyset \not = D$ and $\overline{C} \cap D= \emptyset =C \cap \overline{D}$. 

Note that the sets $\pi ^{-1} (H)$ and $\pi ^{-1} (K)$ are closed since the quotient map $\pi$ is continuous. The set $\pi ^{-1}(H)$ is connected, otherwise the image of the sets under $\pi$, that form the disconnection of $\pi ^{-1}(H)$, would disconnect $H$. Likewise, $\pi^{-1}(K)$ is connected. Finally, $\pi ^{-1}(H) \cap \pi ^{-1}(K) = \pi ^{-1}(C) \cup \pi ^{-1}(D)$ and, since $\pi$ maps the vertices of a non-degenerate edge to a single point, there is no edge between $\pi ^{-1}(C)$ and $\pi ^{-1}(D)$. Thus $\pi ^{-1}(C) \cup \pi ^{-1}(D)$ is not connected, contradicting the subgraph 
 hereditary unicoherence of  $\bf F$
\end{proof}

We start with a definition and then we prove three general lemmas on subgraph hereditary unicoherence of \F limits.

\begin{definition}
Given a closed connected topological graph ${\bf G}$ a quadruple $\langle {\bf H},{\bf K},{\bf A},{\bf B}\rangle$
is called a {\it cycle division in ${\bf G}$} if the following conditions are satisfied: 
\begin{enumerate}
    \item ${\bf H}$ and ${\bf K}$ are closed connected subgraphs of ${\bf G}$
    such that ${\bf H}\cap {\bf K}\neq\emptyset$;
    
    \item ${\bf A}$ and ${\bf B}$ are  disjoint clopen subgraphs of ${\bf G}$ 
        such that ${\bf A}\cap ({\bf H}\cap {\bf K})\neq\emptyset$ and
     ${\bf B}\cap ({\bf H}\cap {\bf K})\neq\emptyset$; 
     
    \item ${\bf H}\cap {\bf K}\subseteq {\bf A}\cup {\bf B}$;
    
\item 
${\bf H}\setminus ({\bf A}\cup {\bf B})$ and 
${\bf K}\setminus ({\bf A}\cup {\bf B})$ are disjoint;

    \item there is no edge between ${\bf A}$ and ${\bf B}$, that is, if ${\bf a}\in {\bf A}$ and ${\bf b} \in {\bf B}$ then $\langle {\bf a},{\bf b}\rangle \not \in E({\bf G}).$
\end{enumerate}
\end{definition}
Note that  \blu{(4)} in the definition of cycle division \blu{follows from (3) and further (5)} implies that ${\bf A}$ and ${\bf B}$ are disjoint. Connectedness of ${\bf H}$ and ${\bf K}$ implies that ${\bf H}\setminus ({\bf A}\cup {\bf B})$ and ${\bf K}\setminus ({\bf A}\cup {\bf B})$ are nonempty.

In the following three lemmas we do not require the epimorphisms to be confluent.
\begin{lemma} \label{cycle_div_down}
     Let  $\langle {\bf H}, {\bf K}, {\bf A}, {\bf B}\rangle$ be a cycle division in $\mathbb F$, where $\mathbb F$ is a connected topological graph. Then there is an $\varepsilon>0$ such that for any $\varepsilon$-epimorphism, $f\colon  \mathbb F \to F$, where $F$ is a finite connected graph, we have that
    $\langle f({\bf H}),f({\bf K}),f({\bf A}),f({\bf B})\rangle$ is a cycle division in $F$.
    
In particular,  if $g\colon  \mathbb F\to G$ and $h\colon G\to F$, where $G$ is a finite connected graph, are  epimorphisms such that $h\circ g=f$, then
 $\langle g({\bf H}),g({\bf K}),g({\bf A}),g({\bf B})\rangle$ is a cycle division in $G$.

\end{lemma}

\begin{proof}
We first show that there is an $\varepsilon_1>0$ such that for any $\varepsilon_1$-epimorphism, $f\colon  \mathbb F \to F$, where $F$ is a finite connected graph,  Condition (5) holds for $\langle f({\bf H}),f({\bf K}),f({\bf A}),f({\bf B})\rangle$. Towards this, assume that for all $\varepsilon >0$ and for all $\varepsilon$-epimorphisms $f_{\varepsilon}\colon \mathbb F \to F_{\varepsilon}$, where $F_{\varepsilon}$ is a finite connected graph, there are vertices $a_{\varepsilon} \in f_{\varepsilon}({\bf A})$ and $b_{\varepsilon} \in f_{\varepsilon}({\bf B})$  such that $\langle a_{\varepsilon}, b_{\varepsilon}\rangle \in E (F_{\varepsilon})$. Let $\varepsilon_n \to 0$ and $f_n\colon \mathbb F \to F_n$ be an $\varepsilon_n$-epimorphism such that $\langle a_n,b_n\rangle \in E(F_n)$, $a_n\in f_n({\bf A})$, and $b_n\in f_n({\bf B})$. Since $f_n$ is an epimorphism there exists ${\bf a}_n' \in f_n^{-1}(a_n)$ and ${\bf b}_n' \in f_n^{-1}(b_n)$ such that $\langle {\bf a}_n', {\bf b}_n'\rangle \in E(\mathbb F)$. By compactness of $\mathbb F$ we may assume that ${\bf a}_n' \to {\bf a}$ and ${\bf b}_n' \to {\bf b}$, for some ${\bf a},{\bf b}\in \mathbb F$. Since $E(\mathbb F)$ is closed, $\langle {\bf a}, {\bf b}\rangle \in E(\mathbb F)$. Because $f_n$ is an $\varepsilon$-map we have that $d({\bf a}_n', {\bf A}) < \varepsilon_n$ and $d({\bf b}_n', {\bf B}) < \varepsilon_n$. So ${\bf a} \in {\bf A}$ and ${\bf b} \in {\bf B}$. This contradicts Condition~(5) for $\langle {\bf H},{\bf K},{\bf A},{\bf B}\rangle$. Thus there is an $\varepsilon_1 > 0$ such that for any $\varepsilon_1$-epimorphism $f$ there is no edge between $f({\bf A})$ and $f({\bf B})$. \blu{In particular,  $f({\bf A})$ and $f({\bf B})$ are disjoint and $\varepsilon_1\leq d({\bf A}, {\bf B})$}. 

Let $\varepsilon >0$ be such that 
$$\varepsilon \le \min\{\varepsilon_1, d({\bf H}\setminus ({\bf A}\cup {\bf B}), {\bf K}\setminus ({\bf A}\cup {\bf B}))\}.$$

That $f({\bf H}) \cap f({\bf K}) \subseteq f({\bf A}) \cup f({\bf B})$ follow from the choice of $\varepsilon$ and Condition~(3) for $\langle {\bf H},{\bf K},{\bf A},{\bf B}\rangle$. Indeed, if  ${\bf x}\in {\bf H}$ and ${\bf y}\in {\bf K}$ are such that $f({\bf x})=f({\bf y})\notin f({\bf A})\cup f({\bf B})$, then ${\bf x}\in {\bf H}\setminus ({\bf A}\cup {\bf B})$, ${\bf y}\in {\bf K}\setminus ({\bf A}\cup {\bf B})$, and hence $d({\bf x}, {\bf y})>\varepsilon$, which is impossible as $f$ is an $\varepsilon$-map.  Condition (1) is clear, Condition (2) follows from $\varepsilon\leq d({\bf A},{\bf B})$, \blu{and Condition (4) follows from (3).}

\end{proof}

\begin{definition}
If ${\bf G}$ and ${\bf F}$ are topological graphs containing cycle divisions $C({\bf G})=\langle {\bf H}_G,{\bf K}_G,{\bf A}_G,{\bf B}_G\rangle$ and $C({\bf F})=\langle {\bf H}_F,{\bf K}_F,{\bf A}_F,{\bf B}_F\rangle$ respectively, and $f$ is a mapping from ${\bf G}$ to ${\bf F}$, then $f$ is said to {\it map $C({\bf G})$ onto $C({\bf F})$} if  $f({\bf H}_G)={\bf H}_F$, $f({\bf K}_G)={\bf K}_F$, $f({\bf A}_G)={\bf A}_F$, and $f({\bf B}_G)={\bf B}_F$.
\end{definition}

\begin{lemma}\label{exists c d}
    If $\mathbb F$ is a connected topological graph which is not subgraph hereditarily unicoherent then there is a cycle division $\langle {\bf H}, {\bf K}, {\bf A}, {\bf B}\rangle$ in~$\mathbb F$.
\end{lemma}

\begin{proof}
    If $\mathbb F$ is not subgraph hereditarily unicoherent there exist closed connected \blu{sub}graphs ${\bf H},{\bf K}$ \blu{of $\mathbb F$} such that ${\bf H} \cap {\bf K}$ is not connected.  Let ${\bf C}$ and ${\bf D}$ be closed disjoint, such that ${\bf C} \cup {\bf D} = {\bf H} \cap {\bf K}$, and there is no edge between a vertex in ${\bf C}$ and a vertex in ${\bf D}$.  We claim that there exists $\varepsilon>0$ such that there is no edge between the sets $N({\bf C},\varepsilon)$ and $N({\bf D},\varepsilon)$ where  $N({\bf M},\varepsilon)$ is the $\varepsilon$ neighborhood of the set ${\bf M}$. As in the proof of Lemma~\ref{cycle_div_down}, if we assume this is not the case, then for a sequence $\varepsilon_n \to 0$ there are sequences of vertices $({\bf c}_n)$ and $({\bf d}_n)$ with ${\bf c}_n \in N({\bf C},\varepsilon_n)$ and ${\bf d}_n \in N({\bf D},\varepsilon_n)$ and edges $\langle {\bf c}_n, {\bf d}_n\rangle \in E(\mathbb F)$ which converge to an edge $\langle {\bf c}, {\bf d}\rangle$ between vertices in ${\bf C}$ and ${\bf D}$, which gives a contradiction with the choice of ${\bf C}$ and ${\bf D}$. 
   Let $f\colon \mathbb F \to G$ be an $\varepsilon$-epimorphism onto a finite graph $G$. Then ${\bf A}=\bigcup\{f^{-1}(f({\bf c}))\colon {\bf c}\in {\bf C}\}$ and ${\bf B}=\bigcup\{f^{-1}(f({\bf d}))\colon {\bf d}\in {\bf D}\}$ are clopen sets such that ${\bf C} \subseteq {\bf A} \subseteq N({\bf C},\varepsilon)$ and ${\bf D} \subseteq {\bf B} \subseteq N({\bf D},\varepsilon)$. 
     Then $\langle {\bf H}, {\bf K}, {\bf A}, {\bf B} \rangle$ is a cycle division in $\mathbb F$. 
\end{proof}

\begin{lemma}\label{extension h u}
Suppose $\mathcal F$ is a projective \F family of finite connected graphs such that for each graph $F\in \mathcal F$ and for each cycle division $\langle H, K, A, B\rangle$ in $F$ there is a graph $G\in \mathcal F$ and an epimorphism $f^G_F\colon G \to F$ in  $\mathcal F$ such that no cycle division in $G$ is mapped by $f^G_F$ onto $\langle H,K,A,B\rangle$. Then the projective \F limit $\mathbb F$ of $\mathcal F$ is subgraph hereditarily unicoherent.
\end{lemma}

\begin{proof}
Suppose $\mathbb F$ is not subgraph hereditarily unicoherent.
Then by Lemma \ref{exists c d} there is a cycle division $\langle {\bf H},{\bf K},{\bf A},{\bf B}\rangle$ in $\mathbb F$. 
Let $f_F\colon \mathbb F \to F$ be an  epimorphism in  \blu{$\mathcal F^\omega$} such that
$\langle f_F({\bf H}),f_F({\bf K}),f_F({\bf A}),f_F({\bf B})\rangle$ is a cycle division in $F$, which exists by Lemma~\ref{cycle_div_down} and by (3) of Theorem \ref{limit}.
Take an epimorphism 
$f^G_F\colon G\to F$ satisfying the assumptions. Let $f_G\colon \mathbb F\to G$ be an epimorphism in  $\mathcal F$ such that $f^G_F\circ f_G=f_F$. 
Then, by Lemma~\ref{cycle_div_down},
$\langle f_G({\bf H}),f_G({\bf K}),f_G({\bf A}),f_G({\bf B})\rangle$ is a cycle division in $G$ that $f^G_F$ maps onto $\langle f_F({\bf H}),f_F({\bf K}),f_F({\bf A}),f_F({\bf B})\rangle$, contrary to the definition of $G$. 
\end{proof}

\begin{theorem}\label{heruni}
$\mathbb G$ is subgraph hereditarily unicoherent.
\end{theorem}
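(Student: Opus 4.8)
The plan is to apply the lemma immediately preceding the statement, which reduces hereditary unicoherence of $\mathbb{G}$ to a purely finite-combinatorial task: for every $F\in\mathcal{G}$ and every cycle division $\langle H,K,C,D\rangle$ in $F$, I must produce a graph $G\in\mathcal{G}$ in which \emph{no} cycle division maps onto $\langle H,K,C,D\rangle$. Since the edges of $\mathbb{G}$ are witnessed by edges at each finite stage, and confluent epimorphisms preserve connectivity in the controlled way guaranteed by Theorem~\ref{confluent-edges}, the strategy is to take $G$ to be a suitable ``doubled'' or ``unwound'' cover of $F$ that destroys any potential cycle. Concretely, I would build $G$ from several copies of $F$ glued along the separating sets $C$ and $D$ so that any closed connected subset of $G$ mapping onto $H$ and any closed connected subset mapping onto $K$ are forced to overlap in a connected (rather than a $C$-versus-$D$ split) preimage, thereby preventing condition~(5) of a cycle division from being met upstairs.

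First I would fix the cycle division $\langle H,K,C,D\rangle$ in $F$ and recall that $C$ and $D$ are the two nonempty ``clasps'' through which $H$ and $K$ meet, with no edges between $C$ and $D$. The key structural observation is that a cycle division records a genuine loop in $F$: travelling through $H$ from $C$ to $D$ and back through $K$ closes up a cycle. The plan is to construct $G$ as an inverse image of $F$ under a confluent epimorphism $f^G_F\colon G\to F$ that ``cuts'' this loop --- for instance by replacing $F$ with a long chain of copies $F\times\{1,\dots,N\}$ (as in the indecomposability argument), where successive copies are joined only across the edges incident to $C$ and $D$, so that any connected subgraph projecting onto all of $H$ or all of $K$ is forced to run through many copies and can no longer be partitioned cleanly into a $C$-side and a $D$-side. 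I would then verify, using the characterization of confluence in Theorem~\ref{confluent-edges}, that $f^G_F$ is indeed a confluent epimorphism, and check that $G$ is connected so it lies in $\mathcal{G}$.

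The verification that no cycle division in $G$ maps onto $\langle H,K,C,D\rangle$ is the crux. Suppose toward a contradiction that $\langle H',K',C',D'\rangle$ is a cycle division in $G$ with $f^G_F(H')=H$, $f^G_F(K')=K$, $f^G_F(C')=C$, $f^G_F(D')=D$. I would exploit that $H'$ and $K'$ are connected and that $C'\subseteq H'\cap K'$ lies over $C$ while $D'\subseteq H'\cap K'$ lies over $D$, with no edge between $C'$ and $D'$. Because the covering graph $G$ was engineered so that the preimages of $C$ and $D$ in any connected set are linked by a path carrying an edge between a vertex over $C$ and a vertex over $D$ (or, depending on the construction, so that $H'\cap K'$ cannot split into two edge-separated closed pieces projecting onto $C$ and $D$ respectively), condition~(5) fails for $\langle H',K',C',D'\rangle$, a contradiction.

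The main obstacle I anticipate is designing the covering graph $G$ so that it simultaneously (a) is connected and admits a confluent epimorphism onto $F$, and (b) genuinely obstructs \emph{all} cycle divisions over $\langle H,K,C,D\rangle$, not merely the ``obvious'' lift. The difficulty is that $C'$ and $D'$ are only required to project \emph{onto} $C$ and $D$, so I must rule out clever connected subsets $H',K'$ that recombine pieces of different copies of $F$; controlling this requires a careful analysis of how connected subsets of the chain $F\times\{1,\dots,N\}$ project, and ensuring that any such subset rich enough to cover $H$ (or $K$) is forced, by the gluing pattern along $C$ and $D$, to contain an edge between the $C$-part and the $D$-part of $H'\cap K'$. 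Getting the gluing combinatorics exactly right --- so that the loop is provably cut while confluence is preserved --- is where the real work lies; I expect this to parallel, but be more delicate than, the chain construction used for indecomposability.
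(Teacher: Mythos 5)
Your overall strategy is the right one and matches the paper's: push the cycle division down to a finite graph $F$, build a cover $G\to F$ that ``cuts the loop,'' lift, and derive a contradiction. But the proof has a genuine gap: the construction of $G$ is never actually specified, and you yourself defer the decisive point (``getting the gluing combinatorics exactly right \dots is where the real work lies''). As written, there is no verifiable argument that any particular $G$ obstructs the lifted cycle division; the crux of the theorem is exactly the part you have left open. Moreover, the specific construction you gesture at --- a long chain $F\times\{1,\dots,N\}$ with successive copies joined across the edges incident to \emph{both} $C$ and $D$ --- is likely to misfire: since $C\cup D\subseteq H$, rerouting edges at both clasps tends to disconnect or spread out the lift of $H$ across many copies as well, and then you lose control of where $C'$ and $D'$ sit, which is precisely the difficulty you flag at the end.

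The paper's resolution is sharper and asymmetric, and it is worth seeing why. Take only \emph{two} copies, $V(G)=V(F)\times\{0,1\}$, keep all edges inside each copy \emph{except} those joining $f_F(K)\setminus f_F(C)$ to $f_F(C)$, and replace exactly those by their ``crossed'' versions between the two copies. Because there are no edges between $C$ and $D$ (condition (5) of a cycle division) and $H\cap(K\setminus C)=D$, no edge of $G$ joins $f_F(H)\times\{0\}$ to $f_F(H)\times\{1\}$; hence the lifted $f_G(H)$, being connected, lies entirely in one copy, say $\{0\}$, forcing $f_G(C)=f_F(C)\times\{0\}$ and $f_G(D)=f_F(D)\times\{0\}$. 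But then $f_G(K)$ must contain both $f_F(C)\times\{0\}$ and $f_F(D)\times\{0\}$, and the crossing of edges splits $\alpha^{-1}(f_F(K))$ into the two pieces $\bigl((f_F(K)\setminus f_F(C))\times\{0\}\bigr)\cup\bigl(f_F(C)\times\{1\}\bigr)$ and $\bigl((f_F(K)\setminus f_F(C))\times\{1\}\bigr)\cup\bigl(f_F(C)\times\{0\}\bigr)$ with no edge between them, one containing $f_F(D)\times\{0\}$ and the other $f_F(C)\times\{0\}$; this contradicts connectedness of $f_G(K)$. Note also that the paper does not need to rule out \emph{every} cycle division in $G$ mapping onto the given one from scratch: Lemma~\ref{epi_cycle} hands you a specific cycle division $\langle f_G(H),f_G(K),f_G(C),f_G(D)\rangle$ in $G$ (the image of the one in $\mathbb G$ under the lift $f_G$ with $\alpha\circ f_G=f_F$), and it suffices to show that this one is impossible. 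Incorporating that lemma would also simplify the burden you set yourself in your third paragraph.
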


\begin{proof}
We verify the assumptions of Lemma \ref{extension h u}.
Let $F$ be a graph in $\mathcal G$ 
and $\langle H_F,K_F ,A_F, B_F\rangle$ be a cycle division in $F$. Denote $C_F=A_F\cap (H_F\cap K_F)$ and $D_F=B_F\cap (H_F\cap K_F)$.

Let $G$ be the graph where $$V(G)=\{(v,i)\colon v\in V(F) \text { and } i\in\{0,1\}\}.$$ Let 
$$E_{i,j}=\{\langle (x,i),(y,j)\rangle\colon x\in K_F\setminus C_F,  y\in C_F
\text{ and } \langle x,y\rangle \in E(F)\},$$ for $i,j\in\{0,1\}$. 

Let
$$E(G)= \left(\{\langle (x,i),(y,i)\rangle\colon i\in \{0,1\}, \langle x,y \rangle \in E(F)\} \cup E_{0,1}\cup E_{1,0} \right)
\setminus (E_{0,0}\cup E_{1,1}).
$$

 For simplicity, let us denote
$K_i=K_F\times\{i\}$, $C_i=C_F\times\{i\}$, $i=\{0,1\}$, viewed as  graphs induced from $G$, and we similarly define
$H_i$ and   $D_i$, and let $L_i=(K_i\setminus C_i)\cup H_i=K_i\cup H_i$.
The projection $\alpha\colon G \to F$ given by $\alpha((x,i)) = x$ is a confluent epimorphism, by Theorem~\ref{confluent-edges}.

We claim that graph $G$ is connected. To see this, note that the graphs $K'_0=C_0\cup (K_1\setminus C_1)$ and $K'_1=C_1\cup (K_0\setminus C_0)$ are each connected as $\alpha |_{K'_i} \to K_F$ is an isomorphism and $K_F$ is connected. Also, each of $H_0$ and $H_1$ is isomorphic to $H_F$, hence is connected. Since $K'_i\cap H_j\neq\emptyset$ for $i,j \in \{0,1\}$, we obtain that $L_0\cup L_1$ is connected. Finally, since $F$ is connected, for every $x\in G$ there is $a\in L_0\cup L_1$ and a path in $F\times\{0\}$ or in $F\times\{1\}$ (hence in $G$) joining $a$ and $x$.

Suppose there is a cycle division $\langle H_G, K_G, A_G, B_G\rangle$ in $G$ that is mapped by $\alpha$ onto $\langle H_F, K_F, A_F, B_F\rangle$. Since $H_G$ is connected and there is no edge in $G$ between $H_0$
and $H_1$ we have that $H_G= H_0$ or $H_G=H_1$. Without loss of generality, assume that $H_G=H_0$. 
Let $C_G=A_G\cap (C_0\cup C_1)$ and 
$D_G=B_G\cap (D_0\cup D_1)$.

We claim that $H_G\cap K_G\subseteq C_G\cup D_G$. 
Indeed, we have $\alpha(H_G\cap K_G)\subseteq \alpha(H_G)\cap \alpha(K_G)=H_F\cap K_F=C_F\cup D_F$. Therefore $H_G\cap K_G\subseteq \alpha^{-1}(C_F\cup D_F)=C_0\cup C_1\cup D_0\cup D_1$. Hence $H_G\cap K_G\subseteq A_G\cup B_G$ implies $H_G\cap K_G\subseteq C_G\cup D_G$. 
Furthermore  $K_G \subseteq K_F \times \{0,1\}=K'_0\cup K'_1$ and  there is no edge between $K'_0$ and $K'_1$.  

To finish the proof we show that $K_G\cap K'_0\neq\emptyset$ and $K_G\cap K'_1\neq\emptyset$, which would imply that $K_G$ is not connected, providing a contradiction.
Since $H_G\cap K_G\subseteq C_G\cup D_G$, $C_G\subseteq A_G$, and $A_G\cap D_G\subseteq A_G\cap B_G=\emptyset$,
we have that $C_G\cap H_G\cap K_G=A_G\cap H_G\cap K_G\neq\emptyset$. Furthermore, as $C_G\cap H_G\cap K_G\subseteq H_G\subseteq C_0$ and $C_G\cap H_G\cap K_G\neq\emptyset$, we get $C_0\cap K_G \neq\emptyset$, and so $ K'_0\cap K_G\neq\emptyset$. 
We similarly argue that $D_G\cap H_G\cap K_G\neq\emptyset$ and 
 $D_G\cap H_G\cap K_G\subseteq D_0$, and so $ D_0 \cap K_G \neq\emptyset$. As $D_0 \subseteq K_0\setminus C_0$, we get
$ (K_0\setminus C_0) \cap K_G \neq\emptyset$, and 
hence $ K'_1\cap K_G\neq\emptyset$.

\end{proof}

Theorem \ref{heruni} and Lemma \ref{herunidown} give the following corollary.

\begin{corollary}
$|\mathbb G|$ is hereditarily unicoherent.
\end{corollary}

\section{Non-homogeneity}\label{nine}

A continuum  $C$ is said to be {\em homogeneous} if for any $ x, y \in C$ there is a homeomorphism $f\colon C \to C$ such that $f( x)= y$. Likewise, a topological graph  ${\bf A}$ is said to be {\em homogeneous} if for any ${\bf x},{\bf y} \in {\bf A}$ there is an automorphism $f\colon {\bf A} \to {\bf A}$ such that $f({\bf x})={\bf y}$.

The main goal of this section is to prove the following theorem.
\begin{theorem}\label{mainhom}
The continuum $|\mathbb G|$ is not homogeneous.
\end{theorem}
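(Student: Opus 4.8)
The plan is to exhibit a topological property of $|\mathbb G|$ that holds at some points but not at others, thereby ruling out the existence of a homeomorphism carrying one kind of point to the other. A homeomorphism of $|\mathbb G|$ must preserve every intrinsically defined local property, so it suffices to find two points with genuinely different local structure. Given the results already established, the most promising distinguishing feature concerns the preimages under the quotient map $\pi$. By Remark~\ref{card} we have $\card(\pi^{-1}(x))\le 2$ for every $x\in|\mathbb G|$, so $|\mathbb G|$ splits into two kinds of points: those $x$ with $\card(\pi^{-1}(x))=1$ and those with $\card(\pi^{-1}(x))=2$. The heart of the argument will be to show that both kinds actually occur and that the distinction is topologically meaningful, i.e. that it cannot be an artifact of the particular prespace presentation.

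First I would analyze the structure of $E(\mathbb G)$. Since the edge relation is transitive (hence an equivalence relation whose classes have at most two elements), a point $x\in|\mathbb G|$ has $\card(\pi^{-1}(x))=2$ exactly when the corresponding vertex of $\mathbb G$ lies in a nondegenerate edge. I would identify two vertices $\mathbf a,\mathbf b\in\mathbb G$ whose local behavior differs — for instance, one that is an endpoint of a nondegenerate edge versus one that is not — and then locate a topological invariant of $|\mathbb G|$ near $\pi(\mathbf a)$ and $\pi(\mathbf b)$ that detects this. The natural candidate invariant is the local arc structure, using Theorem~\ref{Cantor-fan}: every point is the top of a Cantor fan, but the way this fan sits inside small neighborhoods, together with how arcs approach the point, should differ according to whether the point is identified with another under $\pi$. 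Concretely, I would examine whether, in arbitrarily small neighborhoods of the point, the point is a local separating point or has a prescribed number of ``branches'' of the local Cantor-fan structure, and show this number is $1$ at one chosen point and $2$ (or differs) at another.

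The key technical steps, carried out at the level of the Fra\"iss\'e sequence $\{F_n,\alpha_n\}$, would be: (i) construct or identify a vertex whose fiber under $\pi$ has size $2$, realized by a nondegenerate edge that persists under the maps $\alpha_n^\infty$, and a vertex whose fiber has size $1$; (ii) translate the prespace-level property into a neighborhood-base description in $|\mathbb G|$, using that $\alpha_n^\infty$ is a confluent $\varepsilon$-map for large $n$ (Theorem~\ref{limit}(3)) so that small neighborhoods in $|\mathbb G|$ are controlled by preimages of vertices in $F_n$; and (iii) verify that the resulting local invariant is preserved by any homeomorphism. One must be careful that self-homeomorphism (the pointwise self-homeomorphic property of the previous section) does \emph{not} already force homogeneity — it only gives small homeomorphic copies, not an automorphism moving one point to another — so there is no contradiction in the approach.

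The hard part will be step (iii): showing that the quantity ``$\card(\pi^{-1}(x))$'' or its geometric surrogate is genuinely a topological invariant of $|\mathbb G|$ and not merely of the chosen graph presentation. Since $\pi$ is defined from the edge relation of the specific prespace $\mathbb G$, I would need to characterize the two types of points purely in terms of the topology of $|\mathbb G|$ — for example by a local arc-connectivity or local cut-point condition that a homeomorphism must respect. Establishing that such an intrinsic characterization separates the two point-types, and that both types are nonempty, is where the real work lies; everything else is bookkeeping with the inverse system and applications of confluence and Lemma~\ref{arcs-in-graphs}.
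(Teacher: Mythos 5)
There is a genuine gap, and it sits exactly where you place ``the real work'': the invariant you propose, $\card(\pi^{-1}(x))$ (or a local surrogate for it), is a property of the prespace presentation of $|\mathbb G|$, not of the topological space, and nothing in your outline gives a reason to believe it can be made intrinsic. The paper itself contains evidence that it cannot: Lemma~\ref{singlepoint} shows that identifying the two vertices of a nondegenerate edge in the Fra\"{\i}ss\'e limit of cycles produces a prespace \emph{isomorphic} to the original one, and the example following Lemma~\ref{densesingle} exhibits two prespaces with the same topological realization in which the set of singleton-fiber vertices is dense in one and fails to be dense in the other. The cautionary example is the pseudo-arc: the Irwin--Solecki prespace has fibers of both cardinalities $1$ and $2$, yet its realization is homogeneous, so fiber cardinality simply does not descend to a topological invariant. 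Your fallback suggestion --- that the local Cantor-fan or local cut-point structure ``should differ'' at the two kinds of points --- is unsupported and runs against the grain of what the paper proves elsewhere: $|\mathbb G|$ is pointwise self-homeomorphic, every point is the top of a Cantor fan, and all arc components are dense, all of which indicate strong local uniformity rather than a detectable local dichotomy. Without a concrete, verified intrinsic characterization separating the two point types, step~(iii) is not bookkeeping but the entire theorem, and no mechanism for carrying it out is proposed.

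The paper's actual argument is of a completely different nature: it distinguishes points by the manifestly topological, \emph{global} property of lying on a solenoid subcontinuum of $|\mathbb G|$. Theorem~\ref{maininsol} produces a dense set of points lying on solenoids, by finding cycles in the graphs of a Fra\"{\i}ss\'e sequence whose inverse limit is a graph-solenoid realizing a solenoid (Lemmas~\ref{raz}--\ref{double} and Proposition~\ref{graphtosol}). Theorem~\ref{mainsol} then constructs, via the ``unfolding'' construction and Proposition~\ref{mainfund}, a point $\bar x$ such that every vertex $x_n$ is adjacently disconnecting in $K_n$, shows that any solenoid inside $|\mathbb G|$ must arise as the realization of an almost graph-solenoid (Lemma~\ref{densesingle} and Proposition~\ref{soltograph}), and proves that $\bar x$ belongs to no almost graph-solenoid (Proposition~\ref{almgrasol}). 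If you want to salvage your approach you would need to replace fiber cardinality by some such homeomorphism-invariant property; as written, the proposal does not establish non-homogeneity.
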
 

We will accomplish this by showing that there are points in $|\mathbb G|$ that
belong to a solenoid, as well as  there are points in $|\mathbb G|$ that do not
belong to a solenoid.

\bigskip \bigskip

\subsection{Solenoids and graph-solenoids}

The main result of this subsection is Theorem \ref{graphtosol}.  It shows that for many projective Fra\"{\i}ss\'{e} families of cycles with confluent epimorphisms  the topological realization of the projective Fra\"{\i}ss\'{e} limit
exists and is a solenoid.

We define a {\it solenoid} to be a continuum $\iLim\{X_i=S^1,z\mapsto z^{n_i}\}$, where $S^1$ is  the unit circle viewed in the complex plane and $n_i \geq 2$ for infinitely many $i$. 
Hagopian showed in \cite{Hagopian} that a solenoid can be characterized as a homogeneous continuum different from the circle, in which every proper non-degenerate subcontinuum is an arc. 

For $A\in\g$ we will say that $C\subseteq A$ is a {\em cycle} in $A$ if $|C|>2$ and we can enumerate the vertices of $C$ as $(c_0,c_1,\ldots, c_n=c_0)$ in a way that  $c_i\neq c_j$ whenever $0\leq i<j<n$  and  $\langle c_i,c_j\rangle \in E(A)$ if and only if $|j-i| \le 1$ and $0\leq i<j\leq n$.

 Note that if $f\colon D \to C$ is a confluent epimorphism between cycles then for every $a\in C$ and every component $A$ of $f^{-1}(a)$, if $x,y\in D$, $x\neq y$, are adjacent to $A$, then $f(x)\neq f(y)$.  Confluent epimorphisms between cycles will often be referred to as {\em wrapping maps}.
The {\em winding number} of a wrapping map $f$ is 
$n$ if for every (equivalently: some) $c\in C$, $f^{-1}(c)$ has exactly $n$ components. Note that the winding number is multiplicative, that is, the winding number of the composition of wrapping maps is the product of winding numbers of each of the maps in the composition.

We call the inverse limit of an inverse sequence of cycles $\{C_n, p_n\}$, where $p_n$ are confluent epimorphisms, a {\em graph-solenoid} ({\em graph-circle}) if for infinitely many $n$ the winding number of $p_n$ is greater than 1 (all winding numbers are equal to 1) and for every $x \in C_n$ every component of $p_n^{-1}(x)$ contains at least $2$ vertices.

\begin{lemma}\label{sol to graph sol}
Any solenoid $S$ is a topological realization of a graph-solenoid.
\end{lemma}
 
\begin{proof} 
Write  $S$ as $\iLim\{X_n, w_n\}$, where $X_n = S^1$ and $w_n\colon X_{n+1}\to X_n$ is given by $w_n(e^{it})=e^{i l_n t}$, for some $l_n\in\mathbb{N}\setminus\{ 0,1\}$.
Let $\mathcal{P}_1$ be the cover of $X_1$: 
$\{ [e^{0}, e^{i2\frac{\pi}{5}}], [e^{i2\frac{\pi}{5}}, e^{i 2\frac{2\pi}{5}}],\ldots,
[e^{i2\frac{4\pi}{5}}, e^{i 2\frac{5\pi}{5}}]\},$ and for every $n>1$ let  $\mathcal{P}_n$ be the cover of $X_n$: 
$$\{[e^{i \frac{m 2\pi}{L_n}}, e^{i \frac{(m+1)2\pi}{L_n}}]\colon m=0,1,\ldots, L_n-1\},$$
where $L_n=5\times 2^{n-1}\times l_1 l_2\ldots l_{n-1}$.
Let $S_n$ be the cycle with $V(S_n)=\mathcal{P}_n$ and $\langle A_n,B_n\rangle\in E(S_n)$ iff $A_n\cap B_n\neq\emptyset$, whenever $A_n,B_n\in V(S_n)$.
Then $w_n\colon X_{n+1}\to X_n$ induces a wrapping map $g_n\colon S_{n+1}\to S_n$ by $g_n(A_{n+1})=B_n$ iff $A_{n+1}\subseteq w_n^{-1}(B_n)$,
and consider the graph-solenoid $\mathbb{S}=\iLim\{S_n, g_n\}$. Then $|\mathbb{S}|=S$.
\end{proof}

We will later prove that conversely the topological realization of a graph-solenoid is a solenoid. This will follow from Theorem \ref{graphtosol2} and a special case of that is  Theorem \ref{graphtosol}.

Let $\mathcal{C}$ be the family of all cycles with all possible confluent epimorphisms.

\begin{lemma}\label{cycle confl}
The family $\mathcal{C}$ is a projective Fra\"{\i}ss\'{e} family.
\end{lemma}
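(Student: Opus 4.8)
The plan is to verify the four conditions in Definition~\ref{definition-Fraisse} for the class $\mathcal{C}$ of cycles with confluent epimorphisms. Condition~(1) is immediate: up to isomorphism there is exactly one cycle $C_n$ on $n$ vertices for each $n\geq 3$, so the class is countable. Condition~(2) holds because confluent epimorphisms are closed under composition (as noted in the Observation following the definition of confluent epimorphism) and the identity on a cycle is trivially confluent. The substantive content is in conditions~(3) and~(4), and I would derive~(3) from~(4) exactly as in the proof that $\mathcal{G}$ is a projective \F class, except that here I cannot collapse to a single vertex (that is not a cycle); instead I would exhibit a common cover directly using wrapping maps, or apply the amalgamation of~(4) over a suitable small cycle that both given cycles surject onto.

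The heart of the argument is the projective amalgamation property~(4). Given cycles $A,B,C$ and confluent epimorphisms (wrapping maps) $f\colon B\to A$ and $g\colon C\to A$, I would construct an amalgam $D$ together with wrapping maps $f_0\colon D\to B$ and $g_0\colon D\to C$ satisfying $f\circ f_0 = g\circ g_0$. My first instinct is to try the standard amalgamation procedure from the definition preceding the theorem that $\mathcal{G}$ is \F, forming the fibered product $V(D)=\{(b,c): f(b)=g(c)\}$. The difficulty is that this fibered product of two cycles is in general a disjoint union of several cycles (its number of components is governed by the greatest common divisor of the winding numbers of $f$ and $g$), so it need not be a single cycle and hence need not lie in $\mathcal{C}$. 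The plan is therefore to take $D$ to be one connected component of the standard amalgam and restrict $f_0,g_0$ to it; by Theorem~\ref{confluent-restrictions} these restrictions remain confluent, and I must check that this component is itself a cycle and that $f_0,g_0$ are still surjective onto $B$ and $C$ respectively (surjectivity of each restricted projection follows from confluence, since $B$ and $C$ are connected and each is the confluent image under $f$, $g$ of $A$). The commuting square $f\circ f_0=g\circ g_0$ is built into the fibered-product construction.

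The main obstacle I anticipate is precisely showing that a component of the fibered product is a genuine cycle rather than a more degenerate graph, and that the restricted projections are surjective wrapping maps of the correct type. Concretely, if $f$ has winding number $p$ and $g$ has winding number $q$, then parametrizing $A\cong \mathbb{Z}/m$, $B\cong\mathbb{Z}/(pm)$, $C\cong\mathbb{Z}/(qm)$, the fibered product splits into $\gcd(p,q)$ components each isomorphic to a cycle of length $\operatorname{lcm}(pm,qm)/\ldots$, and I would need to track the edge relation carefully to confirm each component is a single simple cycle traversed once, so that the induced maps are indeed wrapping maps. An alternative, possibly cleaner, route that sidesteps counting components is to pick $D$ to be a large cycle together with a single wrapping map onto $B$ and a single wrapping map onto $C$ whose compositions with $f$ and $g$ agree: since all three of $A,B,C$ are cycles, I can choose the winding numbers of $f_0$ and $g_0$ so that $f\circ f_0$ and $g\circ g_0$ are wrapping maps $D\to A$ of the same winding number, and then adjust by a rotation so that they coincide on the nose. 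I would present whichever of these two constructions yields the shortest verification, most likely the restricted fibered product, appealing to Theorem~\ref{confluent-restrictions} for confluence and to a direct combinatorial description of cycles for the remaining checks.
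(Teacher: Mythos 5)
Conditions (1)--(3) are handled adequately (the paper treats (3) directly: if $B$ and $C$ have $m\ge n$ vertices, take $D$ with $m$ vertices and the obvious maps). The problem is your treatment of amalgamation, and the gap you yourself flag as ``the main obstacle'' is in fact fatal for your preferred route. A component of the fibered product of two cycles over a cycle need \emph{not} be a cycle, because confluent epimorphisms between cycles may collapse arcs, i.e.\ have fiber components with more than one vertex. Concretely, let $A$ be a $3$-cycle $(a_0,a_1,a_2)$ and let $B=(b_0,b_0',b_1,b_2)$ and $C=(c_0,c_0',c_1,c_2)$ be $4$-cycles with $f(b_0)=f(b_0')=a_0$, $f(b_i)=a_i$, and likewise for $g$. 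In the standard amalgam the four vertices $\{b_0,b_0'\}\times\{c_0,c_0'\}$ span a complete graph (every pair of coordinates is edge-related because the edge relation is reflexive), so the unique component contains a $K_4$ and is not a cycle. Your $\mathbb{Z}/(pm)$, $\mathbb{Z}/(qm)$ parametrization silently assumes $f$ and $g$ are covering maps with singleton fibers, which is exactly the special case in which no collapsing occurs; in general the restricted fibered product does not stay inside $\mathcal{C}$, so this route cannot be repaired by counting components.

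Your alternative route is the right idea --- it is essentially what the paper does --- but the step ``adjust by a rotation so that they coincide on the nose'' does not work. Two confluent epimorphisms $D\to A$ with the same winding number need not differ by an automorphism of $D$: the sizes of the fiber components of $f\circ f_0$ and $g\circ g_0$ over a given $a_i$ can disagree (say $3,1,2,\dots$ versus $2,2,2,\dots$), and no rotation of $D$ fixes that. The correct repair, and the paper's actual construction, is to build $f_0$ and $g_0$ \emph{simultaneously} rather than independently: take $D$ a cycle on $N\cdot W_f\cdot W_g\cdot k$ vertices, where $k=|V(A)|$, $W_f,W_g$ are the winding numbers, and $N$ bounds the cardinalities of all fiber components of $f$ and $g$; partition $D$ into consecutive arcs of length $N$ and map the $i$-th arc monotonically onto the appropriate component of $f^{-1}(a_j)$ and, at the same time, onto the appropriate component of $g^{-1}(a_j)$, advancing through the components of $B$ and $C$ in cyclic order. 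Since each arc of $D$ is sent by $f\circ f_0$ and by $g\circ g_0$ to the same vertex $a_j$, the square commutes by construction, and both $f_0$ and $g_0$ are confluent by Theorem~\ref{confluent-edges}. Without this synchronization the amalgamation property is not established, so as written the proposal has a genuine gap.
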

\begin{proof}
Conditions (1) and (2) of Definition \ref{definition-Fraisse} are clear.  For condition~(3), given cycles $B$ and $C$ having $m$ and $n$ vertices, $m \ge n$, let $D$ be a cycle with $m$ vertices and $f,g$ be the obvious  epimorphisms. 

For condition~(4), given cycles $A$, $B$, and $C$ with $A=(a_0,a_1,\ldots ,a_k=a_0)$  and  confluent epimorphisms $f\colon B\to A$ and $g\colon C \to A$, let $N$ be the maximum of the cardinalities of the components of $f^{-1}(a_i)$ and of $g^{-1}(a_i)$, $0 \le i < k$, and let $W_f$ and $W_g$ be the winding numbers for $f$ and $g$. Let $D=(d_0,d_1,\ldots , d_M=d_0)$ be a cycle where $M=N\cdot W_f\cdot W_g\cdot k$. Define $f_0\colon D \to B$ and $g_0\colon D \to C$ as follows. For $0 \le i < k$ and $1 \le j \le W_f$, let $B_i^j$ be a component of $f^{-1}(a_i)$ such that $B_i^j$ and $B_{i+1}^j$ are adjacent, $B_{k-1}^j$ and $B_0^{j+1}$ are adjacent and $B_{k-1}^{W_f}$ and $B_0^1$ are adjacent. Similarly define $C_i^j$ as components of $g^{-1}(a_i)$. 

Since $|B_0^1|$ and $|C_0^1|$ have cardinality less than or equal to $N$ there are a monotone epimorphisms $f_0$ and $g_0$ from the arc $[d_0, d_{N-1}]$ onto $B_0^1$ and $C_0^1$ respectively such that $f_0(d_{N-1})$ and $g_0(d_{N-1})$ are adjacent to $B_1^1$ and $C_1^1$ respectively.  Next define $f_0\colon [d_N,d_{2N-1}] \to B_1^1$ and $g_0\colon [d_N,d_{2N-1}] \to C_1^1$ to be monotone epimorphisms such that $f_0(d_N)$ and $g_0(d_N)$ are adjacent to $B_0^1$ and $C_0^1$ respectively. Then $f_0(d_{2N-1})$ and $g_0(d_{2N-1})$ will be adjacent to $B_2^1$ and $C_2^1$ respectively. Continue this process to obtain the required confluent epimorphisms $f_0$ and $g_0$ from $D$ onto $B$ and $C$.
\end{proof}

We will use a few times the following factorization lemma, whose proof is similar to the proof of the amalgamation in Lemma \ref{cycle confl}.
\begin{lemma}\label{cycle confl2}
Let $C,D, T$ be cycles and let $f\colon D\to C$ and $g\colon T\to C$ be wrapping maps. Suppose that the winding number of $g$ is divisible by the winding number of $f$ and that for every $x\in C$ any component of $g^{-1}(x)$ has a cardinality not smaller than the cardinality of any component of $f^{-1}(x)$. Then there is a wrapping map $h\colon T\to D$ such that $g=f\circ h$.
    
\end{lemma}
\begin{proof}
     Let $C=(c_0,c_1,\ldots, c_k=c_0)$, let $W_g$ and $W_f$ be winding numbers of $g$ and of $f$, 
     and we define $h\colon T \to D$  as follows. For $0 \le i < k$ and $1 \le j \le W_g$, let $T_i^j$ be a component of $g^{-1}(c_i)$ such that $T_i^j$ and $T_{i+1}^j$ are adjacent, $T_{k-1}^j$ and $T_0^{j+1}$ are adjacent and $T_{k-1}^{W_g}$ and $T_0^1$ are adjacent. Similarly, for $0 \le i < k$ and $1 \le j' \le W_f$, let $D_i^{j'}$ be a component of $f^{-1}(c_i)$ such that $D_i^{j'}$ and $D_{i+1}^{j'}$ are adjacent, $D_{k-1}^{j'}$ and $D_0^{j'+1}$ are adjacent and $D_{k-1}^{W_f}$ and $D_0^1$ are adjacent.
     In particular, each $D^{j'}_i$ and $T^j_i$ is an arc.
     
     We let $h(T^j_i)=D^{j'}_i$, where $j'=j$ mod $ W_f$, be monotone, different end-vertices are mapped to different end vertices, and adjacent vertices from different $T^j_i$ are mapped to adjacent vertices in different $D^{j'}_i$.  
\end{proof}

\begin{lemma}\label{fundcycles}
Let $\{C_n, g_n\}$ be an inverse sequence of cycles with  confluent epimorphisms 
such that for all $n$ and $x\in C_n$,  every component of $g_{n+1}^{-1}(x)$ has at least two vertices.
Moreover, we assume that for every $k, N$ there is $l>k$ such that the winding number of $g^l_k$ is divisible by $N$. Then $\{C_n, g_n\}$ is a Fra\"{\i}ss\'e sequence for the family $\mathcal{C}$.
\end{lemma}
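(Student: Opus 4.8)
The plan is to verify the \F sequence condition using Proposition~\ref{B-CThm}, which reduces the problem to a single amalgamation-type lifting task: given any $k$, any cycle $A\in\mathcal C$, and any confluent epimorphism $f\colon A\to C_k$, we must find $l>k$ and a confluent epimorphism $h\colon C_l\to A$ satisfying $f\circ h=g^k_l$. The two hypotheses on $\{C_n,g_n\}$ are exactly the resources we need to build such an $h$. First I would record the winding number $W_f$ of the given map $f$ and the maximal component size $N$ occurring in the fibers of $f$. Using the divisibility hypothesis, I would choose $l>k$ so that the winding number $W:=$ (winding number of $g^k_l$) is divisible by $W_f$; this guarantees that $W/W_f$ is an integer, which is the combinatorial compatibility needed so that a wrapping map from $C_l$ onto $A$ can factor through $f$ to recover $g^k_l$.

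The core construction is then to produce $h\colon C_l\to A$ explicitly as a wrapping map. I would view $g^k_l\colon C_l\to C_k$ as wrapping $C_l$ around $C_k$ a total of $W$ times, and I would partition $C_l$ into $W$ consecutive ``loops,'' each of which maps onto $C_k$ once under $g^k_l$. The idea is to define $h$ loop-by-loop so that composing with $f$ reproduces $g^k_l$ on each loop. Concretely, I would use condition $(\ddagger)$ to ensure that each fiber component of $g^k_l$ has at least two vertices, giving enough room inside each preimage arc of $C_l$ to route a monotone piece of $h$ onto the corresponding fiber component of $f$ in $A$; this is the same bookkeeping done in the amalgamation step of Lemma~\ref{cycle confl}, where monotone epimorphisms between arcs $[d_{jN},d_{(j+1)N-1}]$ and the components $B_i^j$ were glued consecutively so that adjacency of consecutive components is preserved. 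Grouping the $W$ loops of $C_l$ into $W_f$ blocks of $W/W_f$ loops each lets $h$ wind $C_l$ around $A$ exactly $W_f$ times while $f$ then collapses this to the required $W$-fold wrapping onto $C_k$; checking $f\circ h=g^k_l$ reduces to verifying it on each loop, which holds by construction.

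The confluence of $h$ follows the same pattern as the final paragraph of Lemma~\ref{cycle confl}: since $h$ is built as a concatenation of monotone epimorphisms on consecutive arcs whose adjacencies match those of the target cycle $A$, for every vertex $a\in A$ each component of $h^{-1}(a)$ contains an edge mapping onto a full edge at $a$, so condition~(3) of Theorem~\ref{confluent-edges} is met. With $h$ confluent and $f\circ h=g^k_l$, Proposition~\ref{B-CThm} immediately yields that $\{C_n,g_n\}$ is a \F sequence for $\mathcal C$.

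I expect the main obstacle to be the careful verification of the identity $f\circ h=g^k_l$ together with simultaneous preservation of all adjacency relations as the loops and blocks are assembled; the divisibility and $(\ddagger)$ hypotheses are precisely what remove the potential obstructions (a winding-number mismatch, or a fiber component too small to host a nondegenerate monotone piece), but the indexing needed to align the $W/W_f$ loops per block with the cyclic structure of $A$ and of $C_k$ is where the bookkeeping is delicate. Once the loop decomposition is set up correctly, the rest is a routine concatenation argument mirroring the proof of Lemma~\ref{cycle confl}.
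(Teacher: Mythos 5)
Your overall strategy coincides with the paper's: reduce to the lifting condition of Proposition~\ref{B-CThm} and then build $h\colon C_l\to A$ by the same concatenation of monotone pieces used in the amalgamation step of Lemma~\ref{cycle confl}. However, there is a genuine gap in how you use $(\ddagger)$. You record $N$, the maximal cardinality of a fiber component of $f$, but then only ask that each component of $(g^l_k)^{-1}(x)$ have at least \emph{two} vertices. That is not enough. Any $h$ with $f\circ h=g^l_k$ must map each component of $(g^l_k)^{-1}(x)$ \emph{onto} an entire component of $f^{-1}(x)$: the points of $C_l$ sent by $h$ into $f^{-1}(x)$ are exactly $(g^l_k)^{-1}(x)$, and the edges joining a fiber component of $g^l_k$ to its neighbors must hit the end-vertices of the corresponding arc-component of $f^{-1}(x)$, so the image is the whole component. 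Hence if every component of $f^{-1}(x)$ has $N>2$ vertices while some component of $(g^l_k)^{-1}(x)$ has only $2$, no such $h$ exists at all, and your routing of ``a monotone piece of $h$ onto the corresponding fiber component of $f$'' breaks down. What you actually need, and what the paper arranges explicitly, is an $l$ for which \emph{both} $W_f$ divides the winding number of $g^l_k$ \emph{and} every component of $(g^l_k)^{-1}(x)$ has at least $N$ vertices. The second condition follows from $(\ddagger)$ by iteration: composing bonding maps whose fiber components all have at least two vertices at least doubles the minimal component size each time, so going far enough along the sequence pushes it above $N$; one further application of the divisibility hypothesis from that stage onward then secures the winding-number condition simultaneously.

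A smaller slip: your block structure is transposed. Since winding numbers multiply under composition and $f\circ h=g^l_k$, the map $h$ must wind $C_l$ around $A$ exactly $W/W_f$ times, not $W_f$ times; correspondingly the $W$ loops of $C_l$ over $C_k$ should be grouped into $W/W_f$ blocks of $W_f$ consecutive loops, each block traversing $A$ once. This is only bookkeeping, but as you yourself note, the bookkeeping is where the argument is delicate, so it is worth getting the indexing right before asserting $f\circ h=g^l_k$ loop by loop.
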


\begin{proof}
We check the condition in the statement of Proposition \ref{B-CThm}. Let $f\colon T\to C_m$, where $T$ is a cycle, be a confluent  epimorphism. Let $W_f$ be the winding number of $f$ and let $N$
be the maximum cardinality  of the components of $f^{-1}(x)$, $x\in C_m$. By hypothesis, we can find an $n$ such that $W_f$ divides the winding number of $g^n_m$ and for every $x\in C_m$ the minimum cardinality  of the components of $(g_m^n)^{-1}(x)$ is at least $N$. Then by Lemma \ref{cycle confl2} we can find a confluent  epimorphism $h\colon C_n\to T$ such that $fh=g^n_m$.

\end{proof}

Our goal now is to show that the topological realization of the
projective  Fra\"{\i}ss\'{e} limit of $\mathcal{C}$ is a graph-solenoid. We will achieve this in Theorem \ref{graphtosol}. In fact we will prove Theorem \ref{graphtosol} for many projective Fra\"{\i}ss\'{e}  family of cycles with confluent epimorphisms.


\begin{notation}\label{primes}
Let $P$  be a nonempty set of prime numbers and let $\mathcal{D}_P$ be the family of all cycles 
with confluent epimorphisms whose winding numbers are of the form $p_{1}^{n_1} p_{2}^{n_2} \ldots p_{k}^{n_k}$, where $n_1, n_2, \ldots, n_k\in \mathbb{N}$ and $p_1, p_2,\ldots, p_k\in P$. Then $\mathcal{D}_P$ is a projective Fra\"{\i}ss\'{e} family.

\end{notation}

\begin{lemma}
If $\mathcal{D}=\mathcal {D}_P$, then  \blu{$\mathcal{D}$} is consistent.
    
\end{lemma}

\begin{proof}
By the multiplicativity of winding numbers, for any choice of $g_i$ and $h_i$ as in Lemma \ref{ladder}, we have $g_i, h_i\in\mathcal{D}$, and hence
\blu{$\mathcal{D}$} is consistent.
    
\end{proof}

\begin{theorem}\label{graphtosol}
\blu{
Let $\mathcal{D}=\mathcal{D}_P$, where $P$ is a nonempty set of prime numbers.
Then $\mathcal{D}$ is a projective  Fra\"{\i}ss\'{e} family,  $\mathcal{D}$ is consistent, and 
the topological realization  $|\mathbb D |$  exists and is a solenoid. }
\end{theorem}

The proof that $\mathcal{D}_P$ is a projective  Fra\"{\i}ss\'{e} family is along the lines of the proof of Lemma \ref{cycle confl}. We have already justified that $\mathcal{D}_P$ is consistent and the topological realization $|\mathbb{D}_P|$ exists by
Theorem \ref{transitive-edges}. To show that $|\mathbb D_P |$ is a solenoid we first need a lemma.

\begin{lemma}\label{singlepoint}
\blu{Let  $\mathcal{D}=\mathcal{D}_P$, where $P$ is a nonempty set of prime numbers}.
Let ${\bf a}_0, {\bf a}_1\in \mathbb{D}$ be such that $\langle {\bf a}_0, {\bf a}_1 \rangle \in E(\mathbb{D})$. Then the topological graph $\mathbb{D}_0$ obtained from $\mathbb{D}$ by identifying ${\bf a}_0$ and ${\bf a}_1$ is isomorphic to  the projective Fra\"{\i}ss\'{e} limit $\mathbb{D}$.

\end{lemma}
\begin{proof}

We check (1), (2), and (3) of Theorem \ref{limit}.
Given $X \in \mathcal D$, let $X_0 \in \mathcal D$ be such that $|X_0| = |X|+1$. There exists a confluent epimorphism $f\colon \mathbb D \to X_0$.  If $f({\bf a}_0) \not = f({\bf a}_1)$ let $p\colon X_0 \to X$ be a confluent epimorphism such that $p(f({\bf a}_0))=p(f({\bf a}_1))$ and let $f_0\colon \mathbb D_0 \to X$ be defined by $f_0({\bf x}) = p(f({\bf a}_0))$ if ${\bf x} \in f^{-1}(f({\bf a}_0))\cup f^{-1}(f({\bf a}_1))$ and $f_0({\bf x})=p(f({\bf x}))$ otherwise. If $f({\bf a}_0)  = f({\bf a}_1)$ then $p$ can be taken to be any confluent epimorphism from $X_0$ to $X$ and $f_0$ is defined the same as previously. This shows (1).

Now we prove (2).
 Let $f_0\colon \mathbb{D}_0\to C$ and $g\colon D\to C$ be  confluent  epimorphisms, where $C,D$ are cycles in $\mathcal D$.  Let ${\bf a}$ be the vertex in $\mathbb{D}_0$  that is the identification of ${\bf a}_0$ and ${\bf a}_1$. 
Define $\rho\colon \mathbb{D} \to \mathbb{D}_0$ by $\rho({\bf a}_0) = \rho({\bf a}_1)={\bf a}$ and $\rho =\id$ otherwise. Clearly $\rho$ is a confluent epimorphism and let $f=f_0\circ \rho$. By property~(2) for $\mathbb D$ there is a confluent epimorphism $w\colon \mathbb{D} \to D$ such that $g\circ w = f$. If $w({\bf a}_0)= w({\bf a}_1)$ then $w'\colon \mathbb{D}_0 \to D$ defined as $w'({\bf a})= w({\bf a}_0)=w({\bf a}_1)$ and $w' = w$ otherwise satisfies $g\circ w'=f_0$.
In the case  $w({\bf a}_0) \not = w({\bf a}_1)$ take cycle $E$ such that $|E|=2|D|$ and $h\colon E\to D$ let be the wrapping map that has the winding number equal to 1 and preimage of each vertex is of cardinality 2. By property~(2) for $\mathbb{D}$ we get $p\colon\mathbb{D}\to E$ such that $f=(gh)\circ p$.  Let $E_0$ be equal to $E$ with $p({\bf a}_0)$ and $p({\bf a}_1)$ identified
and let $\rho_1\colon E\to E_0$ be the map that identifies $p({\bf a}_0)$ and $p({\bf a}_1)$.
Let $p_0\colon \mathbb{D}_0\to E_0$ be the quotient of $p$. Then we have $p_0\circ \rho=\rho_1\circ p$. 
Denote $t=\rho_1(p({\bf a}_0))$ and let
 $h_0\colon E_0\to D$ be equal to $h$ outside $t$, and let $h_0(t)=h(p({\bf a}_0))$.
Then the epimorphism $p_1=h_0\circ p_0\colon \mathbb{D}_0\to D$ is as required, that is, $g\circ p_1=f_0$. 

Finally, we show (3). Let $d$ be a metric on $\mathbb D$.
Given $\varepsilon>0$ there is $C \in \mathcal D$ and a confluent epimorphism $f \colon \mathbb{D} \to C$ such that for all $x \in C$ the $\diam_d(f^{-1}(x)) < \varepsilon/2$. If $f({\bf a}_0) \not = f({\bf a}_1)$ let $C_0$ be the cycle obtained from $C$ by identifying $f({\bf a}_0)$ and $f({\bf a}_1)$;  otherwise let $C_0=C$. Let $f_0\colon \mathbb{D}_0 \to C_0$ be the quotient of $f$. 
Let $d_0$ be the quotient metric on $\mathbb D_0$, that is, $d_0([{\bf a}_0],{\bf x})= \min\{d({\bf a}_0,{\bf x}),d({\bf a}_1,{\bf x})\}$ and $d_0({\bf x},{\bf y})= d({\bf x},{\bf y})$ for ${\bf x},{\bf y}\neq {\bf a}_0,{\bf a}_1$. Then $f_0$ is an $\varepsilon$-epimorphism with respect to $d_0$. 

\end{proof}

\begin{proof}[Proof of Theorem \ref{graphtosol}]

 By the result of Hagopian we have to show that the topological realization is homogeneous and that every proper non-degenerate subcontinuum is an arc.

Let  $\{C_n, g_n\}$ be a
Fra\"{\i}ss\'e sequence for $\mathcal{D}$.
Let $K\subseteq |\mathbb{D}|$ be a proper subcontinuum and consider $\pi_D^{-1}(K)$,  where $\pi_D\colon \mathbb D\to |\mathbb D|$ is the quotient map.
This is a proper topological subgraph of $\mathbb{D}$, which is connected.
Since any proper connected subgraph of a cycle is an arc, in fact
$\pi^{-1}_D(K)$ is the inverse limit of  $\{I_n, g_n|_{I_{n+1}}\}$, where $I_n\subseteq C_n$ is an arc from some point on. Note also that if $I_n$ is an arc, then $g_n|_{I_{n+1}}$ is monotone, and hence $\pi^{-1}_D(K)$ is an arc. Moreover, $\pi_D$ takes endpoints of $\pi_D^{-1}(K)$ to distinct points. Therefore $K=\pi_D (\pi^{-1}_D(K))$ is an arc.

Let $A,B\in |\mathbb{D}|$.
 Applying  Lemma \ref{singlepoint} twice, we can assume that $|\pi_D^{-1}(A)|=1$ and $|\pi_D^{-1}(B)|=1$. Let ${\bf a}\in\mathbb{D}$ be the only vertex such that $\pi_D({\bf a})=A$ and similarly let ${\bf b}\in \mathbb{D}$ be the only vertex such that $\pi_D({\bf b})=B$.
We show that there is an automorphism 
of $\mathbb{D}$ that takes ${\bf a}$ to ${\bf b}$. This automorphism will induce a homeomorphism of $|\mathbb{D}|$ that takes $A$ to $B$.
It suffices to show the following.

\noindent {\bf{Claim:}}  Let ${\bf c}=(c_n)\in \mathbb{D}$, i.e. $c_n\in C_n$,  be such that $|\pi_D^{-1}(\pi_D({\bf c}))|=1$. Let $m$, a cycle $T$, and a confluent epimorphism $f\colon T \to C_m$, be given.
 Take $t\in T$  with $f(t)=c_m$.   Then there is $n>m$ and $h\colon C_n\to T$ such that $h(c_n)=t$ and $fh=g^n_m$.

Given the Claim, we can find sequences 
 $(n_i)$ and $(m_j)$ and  confluent epimorphisms $f_i\colon C_{n_i}\to C_{m_i}$ and $h_i\colon C_{m_{i+1}}\to C_{n_i}$ such that
$g^{n_{i+1}}_{n_i}=h_i f_{i+1}$ and $g^{m_{i+1}}_{m_i}=f_ih_i$ 
and $f_i(a_{n_i})=b_{m_i}$
and $h_i(b_{m_{i+1}})=a_{n_i}$. These maps induce an automorphism of $\mathbb{D}$ that takes $a$ to $b$.

\begin{proof}[Proof of Claim]
Let $I_n$ for $n\geq m$ be the component of $(g_m^n)^{-1}(c_m)$ containing $c_n$.
Since $|\pi_D^{-1}(\pi_D({\bf c}))|=1$ there is $n$ such that $c_n$ is not an end-vertex of  $I_n$.
Therefore for any $k$ there is $n_k$ such that  the cardinality of each sub-arc of $I_{n_k}$ between $c_{n_k}$ and an end-vertex of $I_{n_k}$ is at least $k$.
Hence for large enough $n$ there is a monotone epimorphism from $I_n$ onto $J$ which takes $c_n$ to $t$,
where $J$  is the component of $f^{-1}(c_m)$ containing $t$.
Using this last property we find the required $h$ exactly as in the proof of
Lemma~\ref{cycle confl2}. 
\end{proof}
\end{proof}

\subsection {Points in $|\mathbb{G}|$ that belong to a universal  solenoid.  } In this subsection we show the following theorem.
\begin{theorem}\label{maininsol}
There is a dense set of points in $|\mathbb{G}|$ that belong to a universal solenoid. 
\end{theorem}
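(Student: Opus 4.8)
The plan is to separate the statement into two tasks: first exhibit a single solenoid contained in $|\mathbb G|$, and then spread copies of it densely by means of the pointwise self-isomorphism of $\mathbb G$. Granting the first task, suppose $\mathbb S\subseteq\mathbb G$ is an induced topological subgraph whose realization is a solenoid; because $\mathbb S$ is induced we have $E(\mathbb G)\cap V(\mathbb S)^2=E(\mathbb S)$, so $\pi$ identifies two vertices of $\mathbb S$ exactly when they are joined by an edge of $\mathbb S$, and hence $\pi(\mathbb S)$ is a subspace of $|\mathbb G|$ homeomorphic to the realization $|\mathbb S|$ (cf. Remark \ref{card}). Now let $W\subseteq|\mathbb G|$ be any nonempty open set, choose $p\in W$ and $\mathbf x\in\pi^{-1}(p)$, and apply Theorem \ref{small-copies} to the open neighborhood $\pi^{-1}(W)$ of $\mathbf x$. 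This yields an induced subgraph $\mathbb G_{\mathbf x}\cong\mathbb G$ with $\mathbf x\in\mathbb G_{\mathbf x}\subseteq\pi^{-1}(W)$; the isomorphism carries $\mathbb S$ to an induced subgraph $\mathbb S_{\mathbf x}\subseteq\mathbb G_{\mathbf x}$ with $\pi(\mathbb S_{\mathbf x})$ a solenoid, and $\pi(\mathbb S_{\mathbf x})\subseteq\pi(\mathbb G_{\mathbf x})\subseteq W$. Thus every nonempty open subset of $|\mathbb G|$ meets the set of points lying on a solenoid, which is therefore dense.

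For the first task I would embed, as an induced topological subgraph of $\mathbb G$, the projective \F limit $\mathbb D$ of the class $\mathcal C$ of all cycles with confluent epimorphisms. By Lemma \ref{cycle confl} this is a projective \F class; its set of winding numbers contains every positive integer and is thus infinite, and $(\ddagger)$ plainly holds for $\mathcal C$, so by Proposition \ref{graphtosol} the realization $|\mathbb D|$ is a solenoid. The embedding would be built by the interleaving argument used earlier to embed the pseudo-arc. Fix a \F sequence $\{F_n,\alpha_n\}$ for $\mathcal G$ and a \F sequence $\{C_k,q_k\}$ of cycles for $\mathcal C$, and construct recursively indices $n_1<n_2<\dots$, cycles $J_k\subseteq F_{n_k}$, and wrapping maps $f_k\colon C_{m_k}\to J_k$, $g_k\colon J_{k+1}\to C_{m_k}$ with $g_{k-1}\circ f_k=q^{m_k}_{m_{k-1}}$ and $f_k\circ g_k=\alpha^{n_{k+1}}_{n_k}|_{J_{k+1}}$. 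In the inductive step, the \F property of $\{C_k,q_k\}$ first supplies $m_{k+1}$ and a wrapping map $f_{k+1}\colon C_{m_{k+1}}\to J_{k+1}$ with $g_k\circ f_{k+1}=q^{m_{k+1}}_{m_k}$; applying Lemma \ref{extension} with $W=C_{m_{k+1}}$, $U=J_{k+1}$ and $G=F_{n_{k+1}}$ extends $f_{k+1}$ to a confluent epimorphism $f^{*}\colon H\to F_{n_{k+1}}$ with $(f^{*})^{-1}(J_{k+1})=C_{m_{k+1}}$; the \F property of $\mathbb G$ then gives $n_{k+2}$ and a confluent $G\colon F_{n_{k+2}}\to H$ with $f^{*}\circ G=\alpha^{n_{k+2}}_{n_{k+1}}$; and finally $J_{k+2}\subseteq F_{n_{k+2}}$ is obtained by lifting the cycle $C_{m_{k+1}}=(f^{*})^{-1}(J_{k+1})\subseteq H$ through $G$, with $g_{k+1}=G|_{J_{k+2}}$. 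As for the pseudo-arc, the commuting ladder forces $\iLim\{J_k,\,\alpha^{n_{k+1}}_{n_k}|_{J_{k+1}}\}\cong\iLim\{C_k,q_k\}=\mathbb D$, and since this is an inverse limit of induced subgraphs of the $F_{n_k}$ it is an induced topological subgraph $\mathbb S\subseteq\mathbb G$ with $\pi(\mathbb S)\cong|\mathbb D|$ a solenoid.

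The genuinely new ingredient, and the step I expect to be the main obstacle, is the cycle-lifting that produces $J_{k+2}$: \emph{given a confluent epimorphism $G\colon X\to Y$ of finite connected graphs and a cycle $C\subseteq Y$, one must find a cycle $\tilde C\subseteq X$ such that $G|_{\tilde C}$ is a wrapping map.} The difficulty is that, in contrast to the arc-lifting of Lemma \ref{arcs-in-graphs}, restrictions of confluent maps to subgraphs need not be confluent, so $\tilde C$ must be chosen so that $G|_{\tilde C}$ does not backtrack. I would first pass to the component $K$ of $G^{-1}(C)$ through a chosen vertex over some $c_0$, so that $G|_K$ is confluent by Theorem \ref{confluent-restrictions}. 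Writing $C=(c_0,\dots,c_{m-1},c_0)$ and deleting the edge $\langle c_{m-1},c_0\rangle$ turns $C$ into an arc, which Lemma \ref{arcs-in-graphs} lifts monotonically to an arc in $K$ from a vertex over $c_0$ to a vertex over $c_{m-1}$; confluence applied to the deleted edge (Theorem \ref{confluent-edges}) reconnects the terminal vertex to a fresh vertex over $c_0$. Iterating this ``wind once around $C$'' step produces vertices over $c_0$ that, by finiteness of $X$, eventually repeat and close a cyclic walk whose image winds around $C$; extracting a simple cycle $\tilde C$ from it, the facts that each arc piece is a \emph{monotone} lift and each reconnection is a single edge ensure that for every $c_i$ each component of $(G|_{\tilde C})^{-1}(c_i)$ is a subarc whose two neighbors in $\tilde C$ map to the distinct vertices $c_{i-1}$ and $c_{i+1}$, so that condition (3) of Theorem \ref{confluent-edges} holds and $G|_{\tilde C}$ is a wrapping map. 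A secondary point requiring care is faithfulness of the embedding: the lifts must be arranged to be chordless in the ambient $F_{n_k}$, and so that no singleton of $\mathbb S$ acquires its $\mathbb G$-edge-partner inside $\mathbb S$, guaranteeing that $\mathbb S$ is an induced subgraph and hence that $\pi(\mathbb S)$ is genuinely the solenoid $|\mathbb D|$ rather than a proper quotient of it. Once the embedding is faithful, no separate bookkeeping of winding numbers or of the ``components of size at least two'' condition is needed, since $\iLim\{J_k\}\cong\mathbb D$ and $|\mathbb D|$ is a solenoid by Proposition \ref{graphtosol}.
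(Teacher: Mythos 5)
Your proof is correct, but the density half takes a genuinely different route from the paper's. The paper proves Theorem \ref{maininsol} by combining Proposition \ref{graphtosol} with Theorem \ref{belongs}, and the latter works fiber by fiber: for every $m$ and every $a\in F_m$ it manufactures, inside the basic clopen set $(\alpha^\infty_m)^{-1}(a)$, a thread of cycles $C_i\subseteq F_{m_i}$ forming a \F sequence for $\mathcal C$, using Lemma \ref{trzy} to put $a$ on a cycle, Lemma \ref{raz} to lift cycles, and Lemmas \ref{dwa} and \ref{double} to force divisible winding numbers and fiber components of size at least two (so that Lemma \ref{fundcycles} applies). You instead build a \emph{single} copy of the cycle-\F limit $\mathbb D$ by the same interleaving-plus-extension scheme used for the pseudo-arc (Lemma \ref{extension} together with the back-and-forth between $\{F_n,\alpha_n\}$ and a \F sequence of cycles), and then spread it densely via the pointwise self-isomorphism of Theorem \ref{small-copies}. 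Both arguments ultimately rest on the same technical kernel — lifting a cycle through a confluent epimorphism to a cycle on which the restriction is a wrapping map — which is exactly the paper's Lemma \ref{raz} (its proof arranges the image sequence to advance monotonically around the target cycle, so the restriction is confluent even though the lemma's statement does not say so explicitly); you could cite it rather than reprove it. Your route buys a cleaner conceptual picture and avoids the ad hoc surgeries of Lemmas \ref{dwa} and \ref{double}, since the winding-number and doubling conditions come for free once the embedded object is all of $\mathbb D$; the paper's route buys slightly more, namely a solenoid passing through the fiber over any prescribed vertex $a\in F_m$. Two of your flagged worries dissolve on inspection: cycles are chordless by definition, so the inverse limit of the $J_k$ is automatically an induced topological subgraph of $\mathbb G$, and hence no singleton of $\mathbb S$ can acquire an extra edge-partner inside $\mathbb S$; likewise $\mathbb G_{\bf a}$ in Theorem \ref{small-copies} is induced (it is an inverse limit of components, with the inherited edge relation), so $\pi$ restricted to the transported copy really does realize the solenoid rather than a further quotient.
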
  

We start with some definitions and several lemmas.
\smallskip

We say that ${\bf x} \in \mathbb G$ {\em belongs to a graph-solenoid } if there is a graph-solenoid  $\mathbb S$ and an embedding $h\colon {\mathbb S} \to {\mathbb G}$ such that ${\bf x} \in h({\mathbb S})$.
Similarly, $y\in |\mathbb G|$ {\em belongs to a solenoid} if there is a solenoid $S$ and an embedding $j\colon S\to |\mathbb G|$ with $y\in j(S)$. A solenoid $S$ is said to be  {\it universal} if for any solenoid $S_1$  there is a continuous mapping from $S$ onto $S_1$.

\begin{lemma}\label{raz}
Let $A,B\in\g$ and let $f\colon B\to A$ be a confluent epimorphism.  Let $C=(c_0,c_1,\ldots, c_n=c_0)$ be a cycle in $A$.
Then there is a cycle  $D$ in $B$ such that  $f(D)=C$, and $f|_D$ is a wrapping map.
\end{lemma}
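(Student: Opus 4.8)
The plan is to produce $D$ by a \emph{winding number} argument. Fix an orientation of the cycle $C=(c_0,c_1,\ldots,c_n=c_0)$, recalling that $n\geq 3$ since $|V(C)|>2$, and assign to each directed edge of $C$ a value in $\{-1,0,+1\}$: the step $c_i\to c_j$ gets $+1$ if $j\equiv i+1$, gets $-1$ if $j\equiv i-1$, and gets $0$ if $j=i$ (this is unambiguous precisely because $n\geq 3$). For any walk $w_0,w_1,\ldots,w_\ell$ in $B$ whose vertices all lie in $f^{-1}(V(C))$, let its \emph{signed length} be the sum over consecutive pairs of the value assigned to $f(w_k)\to f(w_{k+1})$; this is additive under concatenation, and for a closed walk (one with $w_\ell=w_0$) it is a multiple of $n$, namely $n$ times an integer winding number. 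I first build a closed walk of positive winding. Starting from any $b_0\in f^{-1}(c_0)$, I extend a walk one ``cross-step'' at a time: if the current vertex $p$ satisfies $f(p)=c_i$, then by confluence (condition (2) of Theorem \ref{confluent-edges} applied to the edge $\langle c_i,c_{i+1}\rangle$) there is a connected set $R\subseteq f^{-1}(c_i)$ containing $p$ and an edge $\langle q,r\rangle\in E(B)$ with $q\in R$, $f(q)=c_i$ and $f(r)=c_{i+1}$. I append a path inside $R$ from $p$ to $q$ (such a path exists since $R$ is connected and $B$ is finite) followed by the edge to $r$, and set $r$ as the new current vertex. Each cross-step raises the signed length by exactly $1$, since the intra-fiber edges map to the degenerate edge $\langle c_i,c_i\rangle$ and contribute $0$, so the signed length of the walk tends to infinity.

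Since $B$ is finite, some vertex occurs infinitely often in this infinite walk, hence at two positions $s<t$ whose signed lengths differ by a positive amount; the segment between them is a closed walk $W$, and because its endpoints have the same $f$-image its signed length is a positive multiple of $n$, so $w(W)\geq 1$. I then decompose $W$ into simple cycles in the usual way: repeatedly locate the first repeated vertex, splice out the simple cycle it closes, and continue. Since winding is additive under this splicing, $w(W)$ equals the sum of the windings of the extracted simple cycles. As $w(W)\neq 0$, at least one extracted simple cycle $Z\subseteq B$ has nonzero winding, and in particular $Z$ has at least three distinct vertices.

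Finally, among all simple cycles in $B$ of nonzero winding I choose one, $D$, with the fewest vertices. If $D=(d_0,\ldots,d_m=d_0)$ had a chord, i.e.\ an edge joining two of its vertices that are not consecutive around $D$, this chord would split $D$ into two shorter simple cycles whose windings sum to $w(D)$, because the two traversals of the chord contribute opposite signs and cancel; one of these would then have nonzero winding, contradicting minimality. Hence $D$ is chordless, that is, a cycle in the sense of the definition. Moreover $f(V(D))$ is a connected subset of $V(C)$; were it a proper subset it would be an arc, on which every closed walk has winding $0$, contradicting $w(D)\neq 0$. Thus $f(V(D))=V(C)$, and the cyclic edge structure of $D$ forces $f(D)=C$, as required. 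I expect the main obstacle to be the bookkeeping in the closed-walk construction, namely making the winding number well defined and guaranteeing, via confluence, that the walk can always be pushed one more step in the positive direction, while the passage from a nonzero-winding closed walk to a chordless cycle is routine once additivity of the winding number is in place.
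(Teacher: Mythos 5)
Your proof is correct, and while it runs on the same engine as the paper's, it finishes in a genuinely different way. Both arguments use condition (2) of Theorem \ref{confluent-edges} to push a walk forward one fiber at a time (a connected set inside $f^{-1}(c_i)$ followed by a crossing edge into $f^{-1}(c_{i+1})$) and invoke finiteness of $B$ to force a repetition. The paper, however, stops at the \emph{first} repeated vertex and declares the closed segment $(y_{i_0},\ldots,y_l)$ to be the required cycle; it asserts without further argument that this segment surjects onto $V(C)$ and says nothing about chordlessness, even though the definition of cycle in this paper is that of an \emph{induced} cycle. You instead set up a winding number for closed walks over $f^{-1}(V(C))$ (well defined because $C$ is induced and $n\geq 3$), extract a closed walk of positive winding, decompose it into simple closed walks using additivity, and take a vertex-minimal simple closed walk of nonzero winding; minimality rules out chords, and nonzero winding forces the image to be all of $C$. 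This costs more bookkeeping but explicitly discharges the two points the paper leaves implicit, and the winding-number formalism you introduce is exactly the notion the paper itself needs later for wrapping maps between cycles, so nothing is wasted.
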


\begin{proof}
Note first that by the confluence of $f$, for every $x\in B$ and $0\leq i<n$, if $f(x)=c_i$ then there is a path
$x=x_0, x_1,\ldots, x_k, z$ in $B$, for some $k$, such that for every $j$, $f(x_j)=c_i$ and  $f(z)=c_{i+1}$. Using this simple observation, and that $B$ is a finite set, we obtain that for any $y\in B$ with $f(y)=c_0$
there is a path $y=y_0, y_1,\ldots, y_l$ such that: 1) for $i<l$ the $y_i$'s are pairwise different; 2) there is $i_0<l$
such that $y_l=y_{i_0}$ and $f(\{y_{i_0},\ldots, y_l\})=C  $;
 3)  the sequence  $f(y_0), f(y_1),\ldots, f(y_l)$
is of the form $c_0,\ldots, c_{n-1}, c_0, \ldots, c_{n-1}, \ldots, c_0, \ldots, c_t$,  for some $t\leq n-1$,
where in each block $c_0,\ldots, c_{n-1}$ we have a sequence of $c_0$'s, then a sequence of $c_1$'s, etc., 
and the block ends  with 
a sequence of $c_{n-1}$'s, and similarly for the block $c_0, \ldots, c_t $.
Note that $f|_{y_{i_0},\ldots, y_l}$ is a confluent epimorphism.

 The graph induced on the vertices $y_{i_0},\ldots, y_l=y_{i_0}$ may not be a cycle
 as there may be edges between non-consecutive vertices.
 We claim that there exists  $D\subseteq \{y_{i_0},\ldots, y_l\}$ such that the graph induced by $D$ is a cycle and $f|_D$ is a wrapping map onto $C$. To see this, start with a graph $K$ such that 
 $V(K)=\{y_{i_0},\ldots, y_l=y_{i_0}\}$ and $\langle y_i,y_j\rangle \in E(K)$ iff $|i-j|\leq 1$ and $i_0\leq i<j\leq l$.
 Suppose there are vertices $y_i,y_j$ in $K$ with $|i-j|>1$ and $\langle y_i,y_j\rangle \in E(B)$. 
 Let $K^+$ be the graph such that $V(K^+)=V(K)$ and $E(K^+)=E(K)\cup \{\langle y_i,y_j\rangle\}$.
  Let $K_R$ and $K_L$ be cycles in $K^+$ such that \blu{ $V(K_R)\cup V(K_L)=V(K)$, $E(K_R)\cup E(K_L)=E(K)\cup\{  \langle y_i,y_j\rangle\}$,} and $E(K_R)\cap E(K_L)= \langle y_i,y_j\rangle$. We claim that $f|_{K_R}$ or $f|_{K_L}$ is confluent.

 Since $f$ is an epimorphism and $C$ is a cycle,  $f(y_i)$ and $f(y_j)$ must either map onto the same vertex in $C$ or onto adjacent vertices in $C$.  
 
 In the case that $f(y_i)=f(y_j)$ there are two subcases: (1) All of the vertices in one of $K_R$ or $K_L$ map to $f(y_i)$ and we may assume $f(K_R)=f(y_i)$. Then $f|_{K_L}$ is a wrapping map from $K_L$ onto $C$. (2)~Both $K_R$ and $K_L$ contain vertices that map to vertices different from $f(y_i)$. Then $f$ restricted to either of $K_R$ or $K_L$ is a wrapping map onto~$C$.  
 
 In the case that $f$ maps $y_i$ and $y_j$ onto adjacent vertices in $C$ there are again two subcases: (1) The image of one of $K_R$ or $K_L$ is a subset of $\{f(y_i),f(y_j)\}$ and we may assume $f(K_R)=\{f(y_i),f(y_j)\}$.
 Then $f|_{K_L}$ is a wrapping map from $K_L$ onto $C$. 
 (2)~The image of both $K_R$ and $K_L$ contains vertices distinct from $f(y_i)$ and $f(y_j)$. In this case  one of $f|_{K_R}$ or $f|_{K_L}$ will not be confluent but the other will be confluent. In every case we obtain a cycle with fewer vertices than $K$ that $f$ maps confluently onto $C$. Since the graphs are finite, repeated applications of this process will 
produce the desired cycle $D$. 

\end{proof} 

\begin{remark}
Note that it may not be the case that for every $b\in B$ with $f(b)=c_0$ there is a cycle $D\subseteq B$ 
with $b\in D$ such that $f(D)=C$.
Compare with Lemma \ref{arcs-in-graphs}. 
\end{remark}

\begin{lemma}\label{dwa}
For any $A\in\g$, $m>0$ and cycle $C$ in $A$ there is $B\in \g$, a cycle $D$ in $B$ and a confluent epimorphism $p\colon B\to A$ such that $p(D)=C$\blu{, $p|_D$ is confluent,} and the winding number of $p|_D$ equals $m$.
\end{lemma}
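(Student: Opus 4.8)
The plan is to produce the cycle $D$ together with the wrapping map onto $C$ first, and then to extend that map over all of $A$ by invoking Lemma~\ref{extension}. Write $C=(c_0,c_1,\ldots,c_n=c_0)$, so $n=|V(C)|>2$, and let $D=(d_0,d_1,\ldots,d_{mn}=d_0)$ be an abstract cycle on $mn$ vertices. Define $w\colon D\to C$ by $w(d_j)=c_{j\bmod n}$. First I would verify that $w$ is a wrapping map of winding number $m$: each edge $\langle d_j,d_{j+1}\rangle$ is sent to the cycle edge $\langle c_{j\bmod n},c_{(j+1)\bmod n}\rangle$, the closing edge $\langle d_{mn-1},d_0\rangle$ going to $\langle c_{n-1},c_0\rangle$ because $mn\equiv 0\pmod n$, so $w$ is a surjective homomorphism onto $C$; and for each $i$ the fibre $w^{-1}(c_i)=\{d_i,d_{i+n},\ldots,d_{i+(m-1)n}\}$ consists of $m$ pairwise non-adjacent vertices (as $n>2$), hence splits into exactly $m$ components. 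Confluence of $w$ is then routine: one checks condition~(3) of Theorem~\ref{confluent-edges}, the preimage of any edge of $C$ being a disjoint union of $m$ edges of $D$, each mapping onto it, so $w$ is a genuine wrapping map of winding number $m$.

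With $w\colon D\to C$ confluent and $C\subseteq A$, I would then apply Lemma~\ref{extension} with $W=D$, $U=C$, and $G=A$ (all three graphs being finite and connected). This yields a finite connected graph $B\in\mathcal G$ containing $D$ together with a confluent epimorphism $f\colon B\to A$ satisfying $f|_D=w$ and $f^{-1}(C)=D$. These $B$, $D$, and $f$ are exactly what is wanted: $f$ is a confluent epimorphism onto $A$, $f(D)=w(D)=C$, and $f|_D=w$ has winding number $m$.

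The only delicate point, and the step I would treat as the main obstacle, is confirming that $D$ is still a \emph{cycle} when viewed inside $B$; that is, that the passage to the larger graph creates no chords among the vertices of $D$. This is guaranteed by the explicit shape of the extension in Lemma~\ref{extension}: there $V(B)$ is the disjoint union of $V(D)$ and $V(A\setminus C)$, and every edge of $B$ is either an edge of $D$, an edge of $A\setminus C$, or a cross edge joining a vertex of $D$ to a vertex of $A\setminus C$. Consequently the edges of $B$ induced on $V(D)$ are exactly $E(D)$, so the induced subgraph on $V(D)$ is precisely the cycle we built, and $|V(D)|=mn>2$. Since moreover $f^{-1}(C)=D$, the restriction $f|_D$ is the wrapping map $w\colon D\to C$ of winding number $m$, completing the argument. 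Alternatively, one could avoid Lemma~\ref{extension} and build $B$ by hand from $m$ copies of $A$ in which the cycle-closing edge $\langle c_{n-1},c_0\rangle$ is rerouted from each copy to the next; the verification of confluence via Theorem~\ref{confluent-edges} is then essentially the same computation.
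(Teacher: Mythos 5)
Your proof is correct, but it takes a genuinely different route from the paper's. The paper builds $B$ directly as an $m$-fold cyclic cover of $A$: it takes $m$ disjoint copies $A_1,\dots,A_m$ of $A$, deletes one fixed edge $\langle x_i,y_i\rangle$ from the copy $C_i$ of $C$ in each $A_i$, and adds the edges $\langle x_i,y_{i+1}\rangle$ (indices mod $m$); the union of the $C_i$'s with these rewirings is the cycle $D$, and the canonical projection is the required confluent epimorphism --- essentially the ``alternative'' you sketch in your final sentence. You instead construct the abstract wrapped cycle $D$ and the winding-number-$m$ map $w\colon D\to C$ first, verify its confluence via condition~(3) of Theorem~\ref{confluent-edges}, and then invoke Lemma~\ref{extension} (with $W=D$, $U=C$, $G=A$) to extend $w$ to a confluent epimorphism $f\colon B\to A$ with $f|_D=w$ and $f^{-1}(C)=D$. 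Your worry about whether $D$ remains an induced cycle inside $B$ is exactly the right point to check, and it is settled as you say by the explicit edge set in the proof of Lemma~\ref{extension} (no chords among vertices of $V(D)$ are created). The two constructions produce different graphs $B$ --- yours keeps a single copy of $A\setminus C$ and attaches each of its vertices to all $m$ preimages of its $C$-neighbours, while the paper's keeps $m$ full copies of $A$ --- but only the existence statement is used later (in Theorem~\ref{belongs}), so either serves. What your route buys is that the confluence of $f$ on all of $B$ comes for free from the already-proved Lemma~\ref{extension}, whereas the paper leaves the confluence of its projection $p\colon B\to A$ as a (routine but unwritten) direct verification.
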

\begin{proof}
Consider $m$ disjoint copies of $A$: $A_1,\ldots, A_m$. Pick any $x,y\in C$ with $\langle x,y\rangle \in E(A)$. The copy of $x$ in $A_i$
we call $x_i$,  the copy of $y$ in $A_i$ we call $y_i$,
and the copy of $C$ in $A_i$ we call $C_i$. From the disjoint union $A_1\cup A_2\cup\ldots\cup A_m$
remove edges: $\langle x_1,y_1\rangle, \langle x_2,y_2\rangle,\ldots, \langle x_m,y_m\rangle$. Add edges: $\langle x_1, y_2\rangle, \langle x_2,y_3\rangle, \langle x_3,y_4\rangle,\ldots, \langle x_m,y_1\rangle$.
This is the required $B$. Note that $B$ is connected (as removing an edge in the cycle $C$ does not disconnect $A$) and that the canonical projection $p\colon B\to A$ is a confluent epimorphism. 
Let $D$ be the finite graph where $V(D)=V(C_1)\cup\ldots\cup V(C_m)$ and edges of $D$ being all of the edges in the union of the $C_i$'s except for the deletions and additions above. Since $p^{-1}(D)=C$ we have by Theorem~\ref{confluent-restrictions} that $p|_D$ is confluent. Clearly, the winding number of $p$ equals $m$.
\end{proof}

\begin{lemma}\label{trzy}
For any $A\in\g$ and $a\in A$ there is $B\in \g$, a confluent epimorphism $f\colon B \to A$, and $b\in B$ with $f(b)=a$ such that $b$ belongs to a cycle in $B$.
\end{lemma}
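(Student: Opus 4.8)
The plan is to produce $B$ by attaching a single new vertex to $A$ so as to create a small cycle (a triangle) through $a$ itself, rather than trying to re-route existing edges. The naive idea---imitating the construction of Lemma \ref{dwa} by taking two copies of $A$, deleting an edge $\langle a,a'\rangle$ incident to $a$ from each, and cross-connecting them---creates a cycle through a preimage of $a$ only when $\langle a,a'\rangle$ already lies on a cycle of $A$; if that edge is a bridge of $A$, the resulting graph falls apart into two components, so it is not in $\mathcal G$. The triangle construction sidesteps this entirely.

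First I would dispose of the degenerate case: if $A$ is a single vertex, let $B$ be a $3$-cycle and $f\colon B\to A$ the constant map. This $f$ is confluent because the only connected subset of $V(A)$ is $V(A)$ itself, whose preimage is all of $B$; taking $b$ to be any vertex of $B$ finishes this case. Now suppose $A$ has at least two vertices. Since $A$ is connected, $a$ has a neighbour $a'\neq a$ with $\langle a,a'\rangle\in E(A)$. I set $V(B)=V(A)\cup\{s\}$ for a new vertex $s$, keep all edges of $A$, add the two edges $\langle a,s\rangle$ and $\langle s,a'\rangle$ (symmetrized, including the reflexive $\langle s,s\rangle$), and define $f\colon B\to A$ by $f(s)=a'$ and $f=\id$ on $V(A)$. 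Then $B$ is connected, and the three vertices $a,a',s$ induce a triangle, which is a cycle through $a$; so $b=a$ is the required preimage, since $f(a)=a$.

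The only real content is verifying that $f$ is a confluent epimorphism. It is a homomorphism since $f(\langle a,s\rangle)=\langle a,a'\rangle\in E(A)$ and $f(\langle s,a'\rangle)=\langle a',a'\rangle\in E(A)$, and it is surjective on both vertices and edges because $f$ restricts to the identity on the subgraph $A\subseteq B$. For confluence I would check condition~(2) of Theorem \ref{confluent-edges}: for every edge $P\in E(A)$ and every $x\in V(B)$ with $f(x)\in P$ one must exhibit an edge $E$ and a connected set $R$ with $x\in R$, $f(R)=\{f(x)\}$, $f(E)=P$, and $E\cap R\neq\emptyset$. For every $x\in V(A)$ one may take $R=\{x\}$ and $E$ the corresponding edge of $A$, exactly as for the identity map. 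The only new point is $x=s$, where $f(s)=a'$: here take $R=\{a',s\}$, which is connected since $\langle s,a'\rangle\in E(B)$ and satisfies $f(R)=\{a'\}$, and for any edge $P=\langle a',w\rangle$ of $A$ through $a'$ take $E$ to be that same edge of $A$, which meets $R$ in $a'$.

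I expect this last verification to be the main (though routine) obstacle, since one must confirm that no new component of a preimage is created whose image is a proper connected subset of $A$. The genuinely important conceptual point, however, is simply that gluing a triangle onto $A$ at the pair $a,a'$ produces a cycle through $a$ while preserving connectedness, thereby avoiding the connectivity failure that a copy-and-reroute construction would suffer whenever the chosen edge is a bridge of $A$.
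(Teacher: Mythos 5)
Your construction is essentially the paper's: both glue a single new vertex onto the two endpoints of an edge $\langle a,a'\rangle$ incident to $a$, creating a triangle, and both resulting maps are monotone (point-preimages are connected and the map is the identity on $V(A)$), hence confluent. The only cosmetic difference is that the paper sends the new vertex to $a$ and takes it as the distinguished preimage $b$, whereas you send it to the neighbour $a'$ and take $b=a$ itself; your extra handling of the one-vertex case is fine but not needed once a non-degenerate edge at $a$ exists.
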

\begin{proof}

Let $\langle a,c\rangle$ be a non-degenerate edge in $A$. To define $B$ let $b\not \in A$, $V(B)=V(A)\cup \{b\}$ and $E(B)=E(A)\cup \{\langle b,a\rangle, \langle b,c\rangle\}$. The map that takes $b$ to $a$, $\langle b,a\rangle$ to $\langle a,a\rangle$ and $\langle b,c\rangle$ to $\langle a,c\rangle$ and is the identity otherwise is confluent and $(a,b,c,a)$ is a cycle in $B$.
\end{proof}

\begin{lemma}\label{double}
For any $A\in\g$ and cycle $C$ in $A$ there is $B\in \g$, a cycle $D$ in $B$ and a confluent epimorphism $f\colon B\to A$
such that $f(D)=C$, \blu{$f|_D$ is confluent,} the winding number of $f$ is one, and for every $x\in C$, $|f^{-1}(x)\cap D|=2$. 
\end{lemma}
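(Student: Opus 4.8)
The plan is to build $B$ by \emph{splitting} each vertex $c_i$ of the cycle $C=(c_0,c_1,\ldots,c_{n-1},c_n=c_0)$ into an adjacent pair $\{c_i,c_i'\}$, so that the doubled cycle $D$ runs $c_0,c_0',c_1,c_1',\ldots,c_{n-1},c_{n-1}'$ and maps onto $C$ two-to-one with a single fiber over each vertex. This is in the same spirit as Lemma~\ref{dwa} and Lemma~\ref{trzy}, except that instead of winding more times we lengthen the fibers.

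Concretely, I would take $V(B)=V(A)\cup\{c_0',\ldots,c_{n-1}'\}$ with $n$ fresh vertices, and define $f\colon B\to A$ by $f|_{V(A)}=\id$ and $f(c_i')=c_i$. For the edges I would \emph{delete} the original cycle edges $\langle c_i,c_{i+1}\rangle$ and reroute each one through the new vertex, setting
$$E(B)=\Big(E(A)\setminus\{\langle c_i,c_{i+1}\rangle:0\le i<n\}\Big)\cup\{\langle c_i,c_i'\rangle,\langle c_i',c_{i+1}\rangle:0\le i<n\}$$
(symmetrized, indices mod $n$). Then $D$ is the subgraph on $\{c_i,c_i'\}$ with the cyclic order above; one has $f(D)=C$, each fiber $(f|_D)^{-1}(c_i)=\{c_i,c_i'\}$ is a single two-vertex arc (connected by $\langle c_i,c_i'\rangle$), so $f|_D$ is monotone with winding number one, and $|f^{-1}(c_i)\cap D|=|\{c_i,c_i'\}|=2$ as required.

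The bulk of the work is checking that $f$ is a confluent epimorphism, which I would do via Theorem~\ref{confluent-edges}(2). The key point is that $\langle c_i,c_i'\rangle$ is collapsed by $f$ to the degenerate edge at $c_i$, so $c_i'$ lies in the same fiber-connected set as $c_i$; hence every edge of $A$ that $c_i$ could originally reach is still reachable over the fiber from $c_i'$, and the deleted cycle edge $\langle c_i,c_{i+1}\rangle$ is recovered as the image of $\langle c_i',c_{i+1}\rangle$. I would then run the routine cases: an edge $P\in E(A)$ that is not a cycle edge survives in $B$ and is directly available, while for a cycle edge $P=\langle c_i,c_{i+1}\rangle$ the two fibers $\{c_i,c_i'\}$ and $\{c_{i+1},c_{i+1}'\}$ are connected and each reaches the edge $\langle c_i',c_{i+1}\rangle$ lying over $P$. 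Connectedness of $B$ is immediate since every deleted cycle edge is replaced by the path $c_i,c_i',c_{i+1}$, and $B$ is finite, so $B\in\mathcal G$.

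The one genuine subtlety — the step I expect to be the main obstacle — is making $D$ an \emph{honest} cycle in the combinatorial sense: had the original edges $\langle c_i,c_{i+1}\rangle$ been retained, they would be chords of $D$ on the vertex set $\{c_i,c_i'\}$, and $D$ would fail the cycle condition. Deleting exactly those $n$ edges and rerouting through the $c_i'$ eliminates all chords, because the cycle property of $C$ inside $A$ guarantees that the only edges among the $c_i$ are the consecutive ones, and I add no edges between distinct primed vertices. Thus $D$ has the required $2n$ vertices ($2n\ge 6$ since $|V(C)|>2$) with adjacencies exactly the consecutive pairs, completing the verification.
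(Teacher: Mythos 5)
Your construction is correct and is essentially the same as the paper's: both replace the cycle $C$ by a doubled cycle $D$ on $2n$ vertices mapping two-to-one onto $C$ with each fiber a connected two-vertex arc, and both verify confluence through the edge criterion of Theorem~\ref{confluent-edges}. The only (immaterial) difference is that the paper builds $B$ as the disjoint union of $A\setminus C$ and $D$, attaching each external neighbor of $c_i$ to \emph{both} vertices of the fiber over $c_i$, whereas you attach it only to the unprimed copy; either choice yields a confluent epimorphism and keeps $D$ chord-free.
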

\begin{proof}

Write $C=(c_0,c_1,\ldots, c_n=c_0)$, let $c'_0, c'_1, \ldots, c'_n$ be vertices not in $A$, and let $D=(c_0,c'_0,c_1,c'_1,\ldots,c_n,c'_n=c_0)$. First take $B_0$, the disjoint union of the graphs $A\setminus C$ and $D$ and let $\alpha\colon D\to C$ be the map that takes $c_i$ and $c'_i$ to $c_i$, for every $i$. We then obtain $B$ from $B_0$ by adding edges $\langle x, y\rangle$ 
 iff $x\in A\setminus C$, $y\in D$, and $\langle x, \alpha(y)\rangle \in E(A)$. Let $f$ be the identity on $A \setminus C$ and $f$ be $\alpha$ on $D$. Then $f$ is a confluent epimorphism and since $f^{-1}(C)=D$ we have by Theorem~\ref{confluent-restrictions} that $f|_D$ is confluent.  
\end{proof}

\begin{theorem}\label{belongs}
The set of vertices in $\gg$ that belong to a graph-solenoid, which is a projective Fra\"{\i}ss\'{e} limit of the family $\mathcal{C}$ of cycles with confluent epimorphisms, is dense. 
\end{theorem}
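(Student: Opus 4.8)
The plan is to show that every basic open set of $\mathbb G$ meets an embedded copy of the projective \F limit $\mathbb K$ of $\mathcal C$, the latter presented as a graph-solenoid. Since $\mathbb G=\iLim\{F_n,\alpha_n\}$, such open sets are exactly the sets $(\alpha^\infty_N)^{-1}(a)$ with $a\in V(F_N)$, so it is enough to produce, for each $N$ and each $a$, a vertex $\mathbf x\in(\alpha^\infty_N)^{-1}(a)$ that belongs to a graph-solenoid.

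First I would put a preimage of $a$ onto a cycle. Applying Lemma \ref{trzy} to $F_N$ and $a$ yields $B\in\mathcal G$, a confluent $f\colon B\to F_N$, and $b\in B$ with $f(b)=a$ lying on a cycle $C_B\subseteq B$. Proposition \ref{B-CThm} then gives $n_0>N$ and a confluent $g\colon F_{n_0}\to B$ with $fg=\alpha^{n_0}_N$, and by Lemma \ref{raz} there is a cycle $C_0\subseteq F_{n_0}$ with $g(C_0)=C_B$. Picking $d\in C_0$ with $g(d)=b$ gives $\alpha^{n_0}_N(d)=a$, so $d$ is a cycle vertex of $F_{n_0}$ lying over $a$.

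The core of the argument is a back-and-forth construction between $\{F_n,\alpha_n\}$ and a \F sequence $\{T_n,p_n\}$ for $\mathcal C$, where (using Lemma \ref{fundcycles}) $\{T_n,p_n\}$ is chosen so that every $p_n$ has all vertex-preimage components of size at least $2$ and the winding numbers are unbounded, making $\mathbb K=\iLim\{T_n,p_n\}$ a graph-solenoid. Beginning from $C_0$ and a confluent $f_0\colon T_{m_0}\to C_0$, I would construct cycles $C_i\subseteq F_{n_i}$ and confluent wrapping maps $f_i\colon T_{m_i}\to C_i$ and $g_i\colon C_{i+1}\to T_{m_i}$ with $f_i g_i=\alpha^{n_{i+1}}_{n_i}|_{C_{i+1}}$ and $g_i f_{i+1}=p^{m_{i+1}}_{m_i}$. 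The backward step, producing $f_{i+1}$ from $g_i$, is Proposition \ref{B-CThm} for $\mathcal C$. For the forward step, producing $C_{i+1}$ and $g_i$ from $f_i$, I would extend $f_i$ via Lemma \ref{extension} (with $U=C_i$, $W=T_{m_i}$, $G=F_{n_i}$) to a confluent $f^*\colon H\to F_{n_i}$ with $(f^*)^{-1}(C_i)=T_{m_i}$; the \F property of $\mathcal G$ then yields $n_{i+1}$ and a confluent $g^*\colon F_{n_{i+1}}\to H$ with $f^*g^*=\alpha^{n_{i+1}}_{n_i}$; finally, choosing a component $\hat C$ of $(g^*)^{-1}(T_{m_i})$ (on which $g^*$ is confluent by Theorem \ref{confluent-restrictions}) and applying Lemma \ref{raz} inside $\hat C$ produces a cycle $C_{i+1}\subseteq F_{n_{i+1}}$ with $g_i:=g^*|_{C_{i+1}}$ onto $T_{m_i}$. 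Since $f^*|_{T_{m_i}}=f_i$, this gives $f_i g_i=\alpha^{n_{i+1}}_{n_i}|_{C_{i+1}}$.

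These ladder relations (cf. Lemma \ref{ladder}(2)) make $\mathbb S=\iLim\{C_i,\alpha^{n_{i+1}}_{n_i}|_{C_{i+1}}\}$ isomorphic to $\mathbb K$; each bonding map of $\mathbb S$ equals $f_i g_i$, a composition of wrapping maps and hence confluent, so $\mathbb S$ is an embedded graph-solenoid in $\mathbb G$. As the bonding maps are surjective, some thread $\mathbf x\in\mathbb S$ has $n_0$-th coordinate $d$, so its $N$-th coordinate is $a$ and $\mathbf x\in(\alpha^\infty_N)^{-1}(a)$, which proves density. I expect the main obstacle to be the last move of the forward step: ensuring that $g^*|_{C_{i+1}}$ is truly confluent and not merely surjective. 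This should hold because the cycle supplied by Lemma \ref{raz} wraps monotonically---its image is constant on plateaus and advances by a single vertex at each transition---so every component of the preimage of an edge of $T_{m_i}$ is a pair of adjacent plateaus and thus carries an edge mapping onto that edge; Theorem \ref{confluent-edges}(3) then gives confluence.
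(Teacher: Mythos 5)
Your proposal is correct and reaches the same conclusion, but the inductive engine is genuinely different from the paper's. Both arguments open identically: Lemma \ref{trzy}, the Fra\"{\i}ss\'e property of $\{F_n,\alpha_n\}$, and Lemma \ref{raz} place a cycle over the prescribed vertex $a$. From there the paper stays entirely inside $\{F_n,\alpha_n\}$: at each step it applies Lemmas \ref{dwa} and \ref{double} to the ambient graph to manufacture, by hand, a bonding map whose restriction to the next cycle has winding number a multiple of $i$ and doubled vertex-preimages, and then invokes Lemma \ref{fundcycles} to certify that the resulting sequence of cycles is itself a Fra\"{\i}ss\'e sequence for $\mathcal{C}$. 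You instead run a back-and-forth against a fixed Fra\"{\i}ss\'e sequence $\{T_n,p_n\}$ for $\mathcal{C}$, using Lemma \ref{extension} to promote each cycle map $f_i$ to a confluent map of ambient graphs on the way forward and Proposition \ref{B-CThm} for $\mathcal{C}$ on the way back; the identification of the limit with your $\mathbb{K}$ then comes from the ladder relations rather than from Lemma \ref{fundcycles}, and you never need Lemmas \ref{dwa} or \ref{double}. What your route buys is a cleaner reason why the limit is the Fra\"{\i}ss\'e limit of $\mathcal{C}$ and a graph-solenoid: both properties are inherited from $\mathbb{K}$ through the isomorphism, so you need not verify the winding and doubling conditions for the composite maps $f_i g_i$ directly; the paper's route buys a self-contained construction in which the winding is exhibited explicitly. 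Two points you should make explicit to close the argument: (i) the confluence of $g^*|_{C_{i+1}}$, which you correctly reduce to the orientation-preserving plateau structure of the cycle supplied by Lemma \ref{raz} together with Theorem \ref{confluent-edges}(3); and (ii) that the indices $m_i$ must be chosen cofinal in $\mathbb{N}$ (which is always possible, by replacing $f_{i+1}$ with $f_{i+1}\circ p^{m'}_{m_{i+1}}$ for a larger $m'$), since otherwise the interleaving would identify your $\mathbb{S}$ with a single finite cycle rather than with $\mathbb{K}$.
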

\begin{proof}
Fix $\{F_n,\alpha_n\}$ a Fra\"{\i}ss\'e sequence for the family $\mathcal G$.
Given any $m$ and $a \in F_m$  apply Lemma~\ref{trzy}, with $A=F_m$ to obtain $B \in {\mathcal G}$ and a confluent epimorphism  $f \colon B \to F_m$ where $B$ contains a cycle $C$ such that  $a \in f(C)$. From the properties of the Fra\"{\i}ss\'e sequence, there is $n > m$ and a confluent epimorphism  $g \colon F_n \to B$ such that $fg=\alpha_m^n$. By Lemma \ref{raz} there is  a cycle $E\subseteq F_n$ which satisfies $g(E)=C$  and $g|_E$ is a wrapping map. Let $a_1\in E$ be such that $fg(a_1)=a$. Write $m_1=n$ and $C_1=E$. 

Applying  Lemmas \ref{dwa}, \ref{double}  and \ref{raz}, construct a sequence $m_1<m_2<\ldots$ as well as $a_i\in F_{m_i}$ that belongs to a cycle $C_i$
in $F_{m_i}$,  $\alpha^{m_{i+1}}_{m_i}(C_{i+1})=C_i$ and the winding number of
$\alpha^{m_{i+1}}_{m_i}|_{C_{i+1}}$ is a multiple of $i$. Then the sequence
$(a_i)$ induces a vertex 
in $ \gg$ which belongs to a graph-solenoid.

By the construction and Lemma \ref{fundcycles}, the inverse sequence $\{C_i, \alpha^{m_{i+1}}_{m_i}|_{C_{i+1}}\}$ is a Fra\"{\i}ss\'e sequence for $\mathcal{C}$.

\end{proof}

\begin{proof}[Proof of Theorem \ref{maininsol}]
Let $\mathbb{C}$ denote the projective Fra\"{\i}ss\'e limit of $\mathcal{C}$. 
Theorem \ref{graphtosol} gives that $|\mathbb{C}|$ is a solenoid and
Theorem \ref{belongs} implies that the set of points in $|\mathbb{G}|$ that belong to an isomorphic copy of $|\mathbb{C}|$ is dense.

We claim that $|\mathbb{C}|$ is universal, which will finish the proof. Indeed, let $S$ be a solenoid and write  $\mathbb{C}$ 
as the inverse limit of a \fra \ sequence.
By Lemma \ref{sol to graph sol}, there is a graph-solenoid
$\mathbb{S}$ such that $|\mathbb{S}|=S$. It follows from Definition \ref{fund-seq-def} (2), that there is an epimorphism from $\mathbb{C}$ onto $\mathbb{S}$. This epimorphism induces a continuous map from $|\mathbb{C}|$ onto $|\mathbb{S}|$, by Lemma \ref{quotient-emb}.

\end{proof}

\subsection{Almost graph-solenoids -- definition and properties}

Solenoids may be obtained as topological realizations of topological graphs other than graph-solenoids, see
Theorems \ref{soltograph} and \ref{soltograph-main}.  
To prove Theorem~\ref{mainhom}, we also have to understand 
certain more general topological graphs whose topological realization is a solenoid.
In this subsection we define and study almost wrapping maps between cycles and correspondingly define almost graph-solenoids.

A surjective homomorphism $g\colon D\to C$, where $C,D$ are cycles with $C=(c_0, c_1,c_2,\ldots, c_k=c_0)$, $|C|\geq 4$, is an {\it almost wrapping map} if there is a confluent epimorphism $f\colon D\to C$ such that for every $y\in D$, if $f(y)=c_i$ then $g(y)\in \{c_i, c_{i+1}\}$.  
We call $f$ a {\it confluent witness} for $g$. Clearly, a confluent witness is not unique. Note that almost wrapping maps are epimorphisms. If for every $x \in C$ every component of $f^{-1}(x)$ contains at least $2$ vertices, then we call $f$ a {\it proper confluent witness} for $g$.  A {\it winding number} of $g$ is the winding number of $f$.
The definition of the winding number clearly does not depend on the choice of a confluent witness.
We call the inverse limit of an inverse sequence of cycles $\{C_n, p_n\}$, where $p_n$ are almost wrapping maps, an {\em almost graph-solenoid} ({\em almost graph-circle}) if for infinitely many $n$ the winding number of $p_n$ is greater than 1 (all winding numbers are equal to 1) and every $p_n$ has a proper confluent witness.

Whenever we consider almost wrapping maps between cycles, we will assume that these cycles are oriented and that almost wrapping maps are orientation preserving. If $C=(c_0, c_1,c_2,\ldots, c_k=c_0)$, then we let the {\it orientation} $<_C$ on $C$ to be the binary relation $<_C=\{(c_i, c_{i+1})\colon i=0,\ldots, k-1\}$, and we write $c_i<_C c_{i+1}$. 
An almost wrapping map $g\colon D\to C$, where $C,D$ are cycles, is {\it orientation preserving} if for some (equivalently any) confluent   witness $f\colon D\to C$ of $g$ we have: for every $x,y\in D$, if $x<_D y$, then $f(x)<_C f(y)$ or $f(x)=f(y)$.
For a cycle $C$ and $a,b\in C$, {\it an oriented arc}, which we simply denote by $[a,b]$,
is the proper subset of $C$, $\{a=x_0, x_1,\ldots, x_m=b\}$ such that for each $j=0,1,\ldots, m-1$, it holds $x_j<_C x_{j+1}$.  We allow that $a=b$ and in that case $[a,b]=\{a\}$. Note that unless $a=b$, then $[a,b]\neq [b,a]$. We also let $(a,b)=[a,b]\setminus \{a,b\}$, and similarly define $[a,b)$ and $(a,b]$.

The following definition for winding numbers for epimorphisms between cycles was introduced in \cite[Definition 4.2]{PhD-Irwin}. This definition extends our definition of winding numbers for almost wrapping maps. It is also proved that the winding numbers are multiplicative \cite[Lemma 4.3]{PhD-Irwin}, that is, if $f$, $g$, and $h$ are epimorphisms between cycles such that $f \circ g = h$ then the winding number of $h$ equals the product of the winding numbers of $f$ and $g$.

    For an epimorphism $f\colon D\to C$, where $D$ and $C$ are cycles with a fixed orientation, and any $a_1<_C a_2$ pick $n$ and $m$ as follows. Let $n$ be the number of $b_1<_D b_2$ with $f(b_1)=a_1$ and $f(b_2)=a_2$, and let  $m$ be the number of $b_1<_D b_2$ with $f(b_1)=a_2$ and $f(b_2)=a_1$. Then the {\it winding number} of $f$ is $n-m$. It does not depend on the choice of $a_1, a_2$.

In the next example, we present an almost graph-circle and an almost graph-solenoid, which are not isomorphic to a graph-circle or a graph-solenoid. It will be convenient to apply the above definition for winding number in 1) of the next example.

\begin{example}
Let $C$ be a cycle and let $k=|C|$. For a positive integer $l$ we let $lC$ be the cycle with $lk$ many vertices. 

1) We show that the inverse limit of cycles with bonding maps being almost wrapping maps may not be a graph-solenoid or a graph-circle. For a cycle $C=(c_0, c_1,c_2,\ldots, c_k=c_0)$ we write 
$3C=(d_0, d_1,d_2,\ldots, d_{3k}=d_0)$, and we let $f_C\colon 3C\to C$ 
to be the map given as follows. Let  $x=d_n\in 3C$  and let $n'$ be the integer part of $\frac{n}{3}$. If $n=0$ (mod 3), then let $f_C(x)=c_{n'}$, if $n=1$ (mod 3), then let $f_C(x)=c_{n'+1 \text{ (mod k)}}$, and if
$n=2$ (mod 3), then let $f_C(x)=c_{n'}$. Then $f_C$ is an almost wrapping map. 

Consider the inverse sequence $\{D_n=3^nD_0, f_n  \}$, where $f_n=f_{D_n}$ and $D_0$ is a cycle. We show that its inverse limit is not isomorphic to a graph-solenoid or a graph-circle.

Suppose towards the contradiction that there is a graph-solenoid or a graph-circle $ \varprojlim \{S_n, w_n\}$ isomorphic to  $ \varprojlim\{D_n, f_n\}$. Then, by Lemma~\ref{ladder}, there are sequences $(k_n)$ and $(l_n)$ and epimorphisms $h_n\colon S_{k_{n+1}}\to D_{l_n}$ and $g_n\colon  D_{l_n}\to S_{k_n}$ such that
$g_n\circ h_n=w^{k_{n+1}}_{k_n}$ and $h_{n-1}\circ g_n=f^{l_{n}}_{l_{n-1}}$.

Note that $f^n_m\colon D_n=3^{n-m}D_m\to D_m$ is given as follows. Write 
$D_m=(c_0, c_1,c_2,\ldots, c_k=c_0)$ and $D_n=(d_0, d_1,d_2,\ldots, d_{3^{n-m}k}=d_0)$.
Let  $x=d_t\in 3^{n-m}C$ and let $t'$ be the integer part of $\frac{t}{3^{n-m}}$. If $t$ is equal to an even number  (mod $3^{n-m}$), then let $f^n_m(x)=c_{t'}$, if $t$ is equal to an odd number (mod $3^{n-m}$), then let $f^n_m(x)=c_{t'+1 \text{ (mod k)}}$. 

Since $g_n\circ h_n=w^{k_{n+1}}_{k_n}$ and $w^{k_{n+1}}_{k_n}$ is confluent, we conclude, by Proposition~\ref{threemaps}, that
$g_n$ is also confluent. Since  $h_{n-1}\circ g_n=f^{l_{n}}_{l_{n-1}}$ and $f^{l_{n}}_{l_{n-1}}$ is not confluent $h_{n-1}$ is not confluent. Also,  because $f^{l_{n}}_{l_{n-1}}$ maps adjacent vertices to different vertices, $g_n$ maps adjacent vertices to different vertices. Since the winding number of  $f^{l_{n}}_{l_{n-1}}$ is 1, by the multiplicativity of winding numbers, so is the winding number of $g_n$. This implies that $g_n$ is an isomorphism. But then we have $w^{k_{n+1}}_{k_n}=h_n$. This is a contradiction since $h_n$'s are not confluent.

2) The almost graph-circle presented in 1) does not have a topological realization. Indeed, it is not hard to find $a_n, b_n, c_n\in D_n$ with $\langle a_n, b_n\rangle\in E(D_n)$ and $\langle b_n, c_n\rangle\in E(D_n)$ such that $f_n(a_{n+1})=a_n$, $f_n(b_{n+1})=b_n$, and $f_n(c_{n+1})=c_n$. For that reason, we will present a slightly more complicated example, which will have a topological realization. Additionally, we will obtain an almost graph-solenoid rather than just an almost graph-circle. We will take the previous example, but now components of preimages of vertices will be of cardinality equal to two (instead of one) and winding numbers will be equal to two (instead of one).

For a cycle $C=(c_0, c_1,c_2,\ldots, c_k=c_0)$ we write 
$12C=(e_0, e_1,e_2,\ldots, e_{12k}=e_0)$, and
let $t_C\colon 12C\to C$  
 to be the map given as follows.
Let  $x=e_n\in 12C$ and let $n'$ be the integer part of $\frac{n}{6}$. If $n=0,1$  (mod 6), then let $t_C(x)=c_{n' \text{ (mod k)}}$, if $n=2,3$ (mod 6), then let $t_C(x)=c_{n'+1 \text{ (mod k)}}$, and if
$n=4,5$ (mod 6), then let $t_C(x)=c_{n' \text{ (mod k)}}$. Then $t_C$ is an almost wrapping map.

Consider the inverse sequence $\{E_n=12^nE_0, t_n  \}$, where $t_n=t_{ E_n}$, and $E_0$ is a cycle.  We show that its inverse limit is not isomorphic to a graph-solenoid or a graph-circle.

Suppose towards the contradiction that there is a graph-solenoid or a graph-circle $ \varprojlim \{S_n, w_n\}$ isomorphic to  $ \varprojlim\{E_n, t_n\}$. Then as in 1) there are sequences $(k_n)$ and $(l_n)$ and there are epimorphisms $h_n\colon S_{k_{n+1}}\to E_{l_n}$ and $g_n\colon  E_{l_n}\to S_{k_n}$ such that
$g_n\circ h_n=w^{k_{n+1}}_{k_n}$ and $h_{n-1}\circ g_n=t^{l_{n}}_{l_{n-1}}$. Moreover, since $g_n\circ h_n=w^{k_{n+1}}_{k_n}$ and $w^{k_{n+1}}_{k_n}$ is confluent, we conclude that
$g_n$ is also confluent. Note that we are using that the system of cycles $S_n, E_n$ can be given orientations coherent with all
$w_n, t_n, g_n, h_n$.

We first claim that for every $n$ and every $x<_{E_{l_{n-1}}}y$ there are $y'<_{S_{k_n}} x'$ with $h_{n-1}(x')=x $ and $h_{n-1}(y')=y$.
Indeed, observe first that for every $m$, we have the following property of $t_m$:  for every $w<_{E_m} z$ there are $w'<_{E_{m+1}} z'$ and $z''<_{E_{m+1}} w''$
with $t_m(w'')=t_m(w')=w$ and $t_m(z'')=t_m(z')=z$. This follows directly from the definition of $t_m$. Therefore for every $m<n$ and $w<_{E_{m}} z$ there are $z''<_{E_{n}} w''$ with $t^{n}_{m}(w'')=w$ and $t^{n}_{m}(z'')=z$.
Now let $x<_{E_{l_{n-1}}}y$ and take $y''<_{E_{l_{n}}} x''$ with $t^{l_{n}}_{l_{n-1}}(x'')=x$ and $t^{l_{n}}_{l_{n-1}}(y'')=y$.
Since $g_n$ is confluent, we have $g_n(y'')<_{S_{k_n}} g_n(x'')$. As $h_{n-1}\circ g_n=t^{l_{n}}_{l_{n-1}}$, we obtain that
 $x'=g_n(x'')$ and $y'=g_n(y'')$ are as required.

Next, take any $n$ and $a<_{S_{k_n}}b$ and let $a'<_{E_{l_n}} b'$ be such that $g_n(a')=a$ and $g_n(b')=b$. Let now $b''<_{S_{k_{n+1}}}a''$ be such that $h_n(a'')=a'$ and $h_n(b'')=b'$. However, then we must have $w^{k_{n+1}}_{k_n}(a'')=a$ and $w^{k_{n+1}}_{k_n}(b'')=b$, as $g_n\circ h_n=w^{k_{n+1}}_{k_n}$. This gives a contradiction since $w^{k_{n+1}}_{k_n}$ is confluent.

\end{example}

The example above shows that an almost graph-solenoid may be not isomorphic to a graph-solenoid. To justify the necessity of studying almost graph-solenoids, we still have to show that topological realizations of almost graph-solenoids are in fact solenoids.
The following theorem generalizes Theorem \ref{graphtosol}.

\begin{theorem}\label{graphtosol2}
Let $\{C_n, f_n\}$ be an inverse sequence of cycles with almost wrapping maps such that for every $n$ and $x\in C_n$, every component of $f_n^{-1}(x)$ has at least two vertices, and let  $\mathbb{C}=\iLim\{ C_n, f_n\}$. Then the topological realization of $\mathbb{C}$ exists and is a solenoid or a circle.

\end{theorem}

\begin{proof}


By a result of Krupski \cite[Theorem 4.3]{Kr}, it suffices 
to show that $|\mathbb{C}|$ is circularly chainable,  it has Kelley property, and it has no local endpoints.
Note, a point $x$ is not a local endpoint of $|\mathbb C|$ if for every open set  $U\subseteq |\mathbb C|$ and $x\in U$ there are continua $X\subseteq U$ and $Y\subseteq U$ such that $x\in X\cap Y$ and neither $X\subseteq Y$ nor $Y\subseteq X$. Let 
$\pi\colon \mathbb{C}\to |\mathbb{C}|$ be the  quotient map.

\medskip

1) {\bf circularly chainable:} 
Take any open cover $\mathcal{U}$ of $|\mathbb{C}|$ and consider $\mathcal{V}=\{\pi^{-1}(U)\colon U\in\mathcal{U}\}$.
We can find $n$ such that the cover $\{(f_n^\infty)^{-1}(c)\colon c\in C_n\}$ refines $\mathcal{V}$. Consider the cover of $|\mathbb{C}|$ by closed sets: $\mathcal K=\{K_c=\pi\circ (f_n^\infty)^{-1}(c)\colon c\in C_n\}$. Note that $K_c \cap K_d \not = \emptyset$ if and only if $\langle c,d \rangle \in E(C_n)$. Fix a compatible metric on $|\mathbb{C}|$. Since $\mathcal K$ is finite and each set $K_c$ is compact, there is $\varepsilon >0$ such that the distance between any two non-intersecting sets in $\mathcal K$ is greater than $\varepsilon$. Let $U_c$  be the $\frac{\varepsilon}{3}$-neighbourhood of $K_c$ intersected with all open sets from $\mathcal{U}$ that contain $K_c$. Then $U_c \cap U_d \not = \emptyset$ if and only if   $K_c\cap K_d\neq \emptyset$ and thus $\{U_c\colon c\in C_n\}$ is an open circular chain  cover of $|\mathbb C|$ refining~$\mathcal{U}$.

\bigskip

To show Kelley property and that there are no local endpoints, we need two observations.
The first observation is an immediate consequence of the definition of an almost wrapping map. Observation 1 will be the only place where we will use that our homomorphisms are almost wrapping maps. 

{\bf Observation 1.}
Suppose that $f\colon D\to C$ is an almost wrapping map, $C$ and $D$ are cycles, and $A\subseteq C$ is an arc.
Let $x\in D$ be such that $f(x)\in A$ and $f(x)$ is not an end-vertex of $A$. Then there is an arc $B\subseteq D$ such that $x$ is not an end-vertex of $B$ and $f(B)=A$.

\bigskip
We will use in Observation 2 that for every $n$ and $x\in C_n$, every component of $f_n^{-1}(x)$ has at least two vertices.

{\bf Observation 2.}
 Suppose that $(a_n)\in \mathbb{C}$ is a single vertex, that is, it is edge related only to itself. Then for any $m$ and $l$ there is $n>m$  such that both arcs in $(f^n_m)^{-1}(a_m)\setminus\{a_n\}$, adjacent to $a_n$, have at least $l$ vertices.
\begin{proof}[Proof of Observation 2]
Note that it suffices to show the statement for $l=1$, and then apply it $l$ times to get the required conclusion. 

Suppose towards the contradiction that for every $n>m$ there is~$x_n$ adjacent to $a_n$ which is mapped by $f^n_m$ to a vertex adjacent to $a_m$. Since for every $n> m$ the component of $(f_{n-1})^{-1}(a_{n-1})$ containing $a_{n}$ has at least two vertices, 
the other vertex adjacent to $a_n$, which we denote by $a'_n$, satisfies $f_{n-1}(a'_n)=a_{n-1}$. Since $f_{n-1}$ is a homomorphism, $f_{n-1}(x_n)\in\{a'_{n-1}, a_{n-1},x_{n-1}\}$.
However, $f^{n-1}_m(a'_{n-1})=f^{n-1}_m(a_{n-1})=a_m$, and hence
$f_{n-1}(x_n)=x_{n-1}$. Thus $(x_n)\in\mathbb{C}$ and $\langle(a_n), (x_n)\rangle \in E(\mathbb{C})$, which contradicts  that $(a_n)$ is a single vertex.

 \end{proof}

2) {\bf Kelley property:} 
Let $X$ be a proper subcontinuum of 
$|\mathbb{C}|$,  $x\in X$,   ${\bf K}=\pi^{-1}(X)$, and let ${\bf a}=(a_n)\in \mathbb{C}$ be such that $\pi({\bf a})=x$.
Therefore ${\bf K}$ is closed, connected, and additionally: if ${\bf p}\in {\bf K}$, ${\bf q}\in \mathbb{C}$, and $\langle {\bf p},{\bf q}\rangle \in E(\mathbb{C})$, then ${\bf q}\in {\bf K}$.

If we show that $\mathbb C$ is Kelley then, by Proposition~\ref{Kelley}, $|\mathbb C|$ is Kelley. Thus we need  to show that for every sequence ${\bf p}_k\to {\bf a}$ in $\mathbb C$ there are closed connected subsets ${\bf P}_k$ of $\mathbb{C}$ with ${\bf p}_k\in {\bf P}_k$ such that $\lim {\bf P}_k={\bf K}$. To show this, using the definition of the product topology, we have to show that for every $M$  there is an $N$ such that if  ${\bf c}=(c_n)$ is such that $c_N=a_N$, then there is a closed connected set ${\bf L}$ with ${\bf c}\in {\bf L}$ and $f^\infty_M({\bf L})=f^\infty_M({\bf K})$.

Write ${\bf K}=\iLim\{ K_n, f_n\}$, where $K_n=f^\infty_n({\bf K})$ is an arc. Therefore we have $a_n\in K_n$ for each $n$.
We fix $M$.

\underline{Case 1.} The ${\bf a}=(a_n)$ is the only vertex such that $\pi({\bf a})=x$.
Using   Observation 2, take $N> M$ such that if $s\neq t$ are vertices in $C_N$ adjacent to $a_N$ then $f^N_M(s)=f^N_M(t)=a_M$. Let ${\bf c}=(c_n)$ agree with  ${\bf a}$ on the first $N$ coordinates, and let $L_N=K_N\cup\{s,t\}$ (note: at least one of the $s$ or $t$ is already in $K_N$). Then $a_N$ is not an end-vertex of $L_N$.
Use Observation 1 to construct  arcs $L_n$ for $N<n$ such that $f_n(L_{n+1})=L_n$, $c_n \in L_n$,  $c_n$ is not an end-vertex of $L_n$.
For $n<N$ let $L_n = f^N_n(L_N)$, and let ${\bf L}=\iLim\{ L_n, f_n\}$. Therefore, in particular, we have 
$K_M=L_M$ and ${\bf c}\in {\bf L}$, as we wanted.

\underline{Case 2.} There is ${\bf b}=(b_n)\neq {\bf a}$ such that $\pi({\bf b})=x$. Take $N> M$ such that $a_{N-1}\neq b_{N-1}$.
Since ${\bf a}\in {\bf K}$, we also have ${\bf b}\in {\bf K}$, hence $b_N\in K_N$. Let $s\neq b_N$ be adjacent to $a_N$. 
Since $f_{N-1}(b_N)=b_{N-1}\neq a_{N-1}$ and each component of $(f_{N-1})^{-1}(a_{N-1})$ has at least two vertices, we have $f_{N-1}(s)=a_{N-1}$, and hence $f^N_M(s)=a_M$. Let $L_N=K_N\cup\{s\}$. Then $a_N$ is not an end-vertex of $L_N$. Let ${\bf c}=(c_n)$ agree with  ${\bf a}$ on the first $N$ coordinates. Now use Observation~1 and proceed exactly as in Case~1 to construct a sequence of arcs $(L_n)$ and ${\bf L}=\iLim\{ L_n, f_n\}$ such that 
$K_M=L_M$ and ${\bf c}\in {\bf L}$.

\medskip

3) {\bf no local endpoints:} 
Let $U$ be an open subset of $|\mathbb C|$ and $x\in U$. We have to find continua $X\subseteq U$ and $Y\subseteq U$ such that $x\in X\cap Y$ and neither $X\subseteq Y$ nor $Y\subseteq X$.

\underline{Case 1.} There is exactly one ${\bf a}=(a_n)\in \mathbb{C}$ such that $\pi({\bf a})=x$.
Let $m$ be such that $(f_m^{\infty})^{-1}(a_m) \subseteq \pi^{-1}(U)$.
Apply Observation 2 to $m$ and $l=3$ and get $n$.
In $C_n$ consider $p_1<_{C_n}p_2<_{C_n}p_3<_{C_n} a_n<_{C_n}q_1<_{C_n}q_2<_{C_n}q_3$ and take arcs $A_n=\{p_1,p_2,p_3, a_n, q_1\}$ and $B_n=\{p_3,a_n, q_1, q_2\, q_3\}$.  Next apply Observation~1 repeatedly to get for every $i>n$ an arc $A_i$ in $C_i$ such that $a_i\in A_i$, $a_i$ is not an end-vertex of $A_i$, and $f_i(A_{i+1})=A_i$; and let ${\bf A}=\iLim \{A_i, f_i\}$. Analogously construct ${\bf B}$.  Then ${\bf A}$ and ${\bf B}$ are compact connected topological graphs in $(f_m^{\infty})^{-1}(a_m)$ and ${\bf a} \in {\bf A} \cap {\bf B}$. If ${\bf y}\in {\bf A}$ is such that $f^\infty_n({\bf y})=p_1$ and ${\bf z}\in {\bf B}$ is such that $f^\infty_n({\bf z})=q_3$, then since $p_1 $ is not adjacent to $B_n$ and $q_3$ is not adjacent to $A_n$, we have $\pi({\bf y})\in\pi({\bf A})\setminus \pi({\bf B})$ and $\pi({\bf z})\in\pi({\bf B})\setminus \pi({\bf A})$, showing that  $\pi({\bf A}) \not \subseteq \pi({\bf B})$ and $\pi({\bf B}) \not \subseteq \pi({\bf A})$. Thus $X=\pi({\bf A})$ and $Y=\pi({\bf B})$ are the required continua.

\underline{Case 2.} There are ${\bf a}=(a_n)\neq {\bf b}=(b_n)$ such that $\pi({\bf a})=\pi({\bf b})=x$.
Let $n$ and $u_1<_{C_{n}} u_2<_{C_{n}} a_{n} <_{C_{n}} b_{n}  <_{C_{n}}  v_1<_{C_{n}}  v_2$ be such that 
$(f^{\infty}_n)^{-1}(\{u_1,u_2, a_n,b_n,v_1,v_2\})\subseteq \pi^{-1}(U)$.   Take arcs $A_{n}=\{u_1, u_2, a_{n}, b_{n}\}$ and $B_{n}=\{ a_{n}, b_{n}, v_1,v_2\}$. Since $a_{n}$ is not an end-vertex of $A_{n}$ we may apply Observation~1 repeatedly to get for every $i>n$ an arc $A_i$ in $C_i$ such that $a_i\in A_i$ and $f_i(A_{i+1})=A_i$.  Let ${\bf A}=\iLim \{A_i, f_i\}$. Then ${\bf A}$ is compact connected and ${\bf a}\in A$. Analogously, using that  $b_{n}$ is not an end-vertex of $B_{n}$, construct a compact connected set ${\bf B}$ with ${\bf b}\in {\bf B}$.
   Finally, picking ${\bf y}\in {\bf A}$ and ${\bf z}\in {\bf B}$ such that 
  $f^\infty_n({\bf y})=u_1$ and  $f^\infty_n({\bf z})=v_2$, as in Case 1, we conclude that $X=\pi({\bf A})$ and $Y=\pi({\bf B})$ are as required.

\end{proof}

The following lemma is an immediate consequence of the definition of an almost wrapping map.

\begin{lemma}\label{almwrapinterval}
Let $C,D$ be cycles, $n=|C|$, $C=(c_0, c_1, c_2\ldots, c_n=c_0)$, and let $w\colon D\to C$ be an epimorphism.
Then $w$ is an almost wrapping map with the winding number equal to $k$ if and only if we can partition $D$ into arcs $A_i=[a_i, b_i]$, $0\leq i\leq kn$,  with $b_i$ adjacent to $a_{i+1}$, and $b_{kn}$ adjacent to $a_1$, such that 
$w(a_i)=w(b_i)=c_{(i \text{ mod } n)}$ and 
$w(A_i)=\{c_{(i \text{ mod } n)}, c_{(i+1 \text{ mod } n)}\}$.
\end{lemma}

Let  $w\colon D\to C$ be an almost wrapping map and let $X\subseteq C$ be an arc.
We say that a component $T$ of $w^{-1}(X)$ is {\it proper} iff $w(T)=X$.
Let now $b\in C$ and let $M$ and $M'$ be proper components of $C\setminus \{b\}$. Let $a,c\in 
C$ be such that $a<_C b <_C c$. We say that $M'$ is {\it proper component }adjacent to $M$ if 
 there are $$m<_D x_1<_D \ldots <_D x_{k_1}<_D y<_D z_1<_D \ldots <_D z_{k_2} <_D m'$$ with $m\in M$, $m'\in M'$, 
$w(x_1)=w(y)=w(z_{k_2})=b$, $w(m)=a$, $w(m')=c$, 
$w(x_i)\in\{a,b\}$, and $w(z_i)\in\{b,c\}$. 
The following observation follows immediately from Lemma \ref{almwrapinterval}.
\begin{observation}\label{adjac}
Suppose that $C,D$ are cycles and $w\colon D\to C$ is an almost wrapping map with the winding number equal to $K$, and let $b\in C$. Then there are $M_1, M_2,\ldots, M_K$, proper components of $w^{-1}(C\setminus\{b\})$, such that $M_{i+1}$ is proper component adjacent to $M_i$ for $i=1,\ldots, K-1$, and  $M_1$ is proper component adjacent to $M_K$. 

\end{observation}

\begin{remark}\label{zigzak}
Let $w\colon D\to C$ be an almost wrapping map.
It is possible that for $x,y\in D$ we have $x<_D y$ and $w(y)<_C w(x)$. 
However, there are no $x,y_1,\ldots, y_n,z$,  $n\geq 1$, with $x<_D y_1<_D\ldots <_D y_n<_D z$ 
and  $w(z)<_C w(y_1)=\ldots =w(y_n)<_C w(x)$.

\end{remark}

Generalizing Remark \ref{zigzak} we have the following description of almost wrapping maps. It characterizes almost wrapping maps as those surjective homomorphisms, which do not reverse the order of triples.
\begin{lemma}\label{notswaptriples}
Let $w\colon D\to C$ be a surjective homomorphism, where $C,D$ are cycles.
Then $w$ is an almost wrapping map if and only if there is an orientation of $C$ and of $D$ such that there are no non-adjacent $x\neq z\in C$  and no $a, c\in D$  such that  $w(c)=x$, $w(a)=z$, 
and $w([a,c])=[x,z]$.

\end{lemma}

\begin{proof}

Suppose first that $w\colon D\to C$ is an almost wrapping map. Write $C=(c_0, c_1, \ldots, c_n=c_0)$ and let $f\colon D\to C$ be a confluent epimorphism such that for every $y\in D$ and $c_l\in C$,
if $f(y)=c_l$ then $w(y)\in \{c_l, c_{l+1}\}$, i.e. if $w(y)=c_l$, then $f(y)\in \{c_{l-1}, c_l\}$. 
 Suppose there are  non-adjacent $x=c_i\neq z=c_j\in C$  and $a, c\in D$  such that  $w(c)=x$, $w(a)=z$, 
and $w([a,c])=[x,z]$. Then we have $f(c)\in\{c_{i-1}, c_i\}$,
$f(a)\in\{c_{j-1}, c_j\}$,
and $f([a,c])$ is one of  $[c_{i-1}, c_{j-1}]$, $[c_{i-1}, c_j]$,   $[c_{i}, c_{j-1}]$, $[c_{i}, c_j]$. As $f$ is order-preserving, $f(a)\in\{c_{i-1}, c_i\}$
and $f(c)\in\{c_{j-1}, c_j\}$. However, this is impossible since $c_i$ and $c_j$ are non-adjacent.

To prove the converse, suppose that there is an orientation of $C$ and of $D$  so that there are no non-adjacent $x\neq z\in C$  and no $a, c\in D$  such that  $w(c)=x$, $w(a)=z$, and $w([a,c])=[x,z]$,
and write $C=(c_1, \ldots, c_n=c_1)$.
In particular, there are no $x,y,z\in C$ with $x<_C y$ and $y<_C z$, and 
$a=b_0, b_1, \ldots, c=b_m$ with $b_j<_D b_{j+1}$ such that  $w(c)=x$, $w(b_j)=y$ for $j=1,\ldots, m-1$, and $w(a)=z$. This implies that we can partition $D$ (in a possibly non-unique way) into arcs $A_i=[a_i, b_i]$, $1\leq i\leq kn$, for some $k$, with $b_i$ adjacent to $a_{i+1}$, and $b_{kn}$ adjacent to $a_1$, such that 
$w(A_i)\in \{ c_{(i \text{ mod } n)}, c_{(i+1 \text{ mod } n)}  \}   $.
Therefore $f$ such that $f(A_i)=\{c_i\}$ is a confluent witness for $w$.

\end{proof}

We show in the next lemma that almost wrapping maps are closed under composition.
\begin{lemma}\label{compawrap}
Let $C, D$, and $E$ be cycles and let $g_1\colon D\to C$ and $g_2\colon E\to D$ be almost wrapping maps that have proper confluent witnesses.
Then $g=g_1\circ g_2$ is an almost wrapping map, which has a proper confluent witness.
\end{lemma}
\begin{proof}
Write $D=(d_0,\ldots, d_n=d_0)$ and $C=(c_0,\ldots, c_m=c_0)$.
Let $f_1\colon D\to C$ and $f_2\colon E\to D$ be proper confluent witnesses for $g_1$ and $g_2$, and consider the confluent epimorphism $f_0=f_1\circ f_2$. Note first that for $e\in E$ if $f_2(e)=d_i$ and $f_1(d_i)=c_j$, then $g_2(e)\in \{d_i, d_{i+1}\}$, $g_1(d_i)\in \{c_j, c_{j+1}\}$ and  $g_1(d_{i+1})\in \{c_{j+1}, c_{j+2}\}$,  hence $g(e)\in\{c_j, c_{j+1}, c_{j+2}\}$.  Therefore we have to modify $f_0$ to get a proper confluent witness for $g$.

For each $j$ and a component $Y=[y,z]$ of $f_0^{-1}(c_j)$,
let first $X=[w,x]$ denote the component of  $f_0^{-1}(c_{j-1})$ adjacent to $Y$.   If there is $s\in X$ such that $g(s)=c_{j+1}$, let $s_X$ be the first such vertex, that is, $g(s_X)= c_{j+1}$ and there is no $s' \in [w,s_X)$ such that $g(s') = c_{j+1}$. If there is no $s\in X$ such that $g(s)=c_{j+1}$, let $s_X=y$. Analogously, if there is $s \in Y$ such that $g(s)=c_{j+2}$ let $s_Y$ be the first such vertex in $Y$, otherwise let $s_Y$ be the vertex not in $Y$ that is adjacent to $z$.

Define a map $f$ from $E$ to $C$ by declaring that
$f$ assumes value $c_j$ on $[s_X,s_Y)$.   To see that $f$ is a confluent epimorphism, it suffices to see that $s_Y\neq y$, since then $[s_X,s_Y)\neq\emptyset$. We show that in fact $s_Y\neq y, y^+$, where  $y^+$ is such that $y<_E y^+$, which will imply that for every $a \in C$ every component of $f^{-1}(a)$ contains at least $2$ vertices.
We have that $Y=[y,z]$ is a component of $f_0^{-1}(c_j)$.  Let $Y_1=[y_1,z_1]$ be the component of $f_1^{-1}(c_j)$ such that $f_2(Y)=Y_1$ and   let $y_1^+$ be such that $y_1<_Dy_1^+$. Then $f_2(y)=y_1$ and so $g_2(y)\in \{y_1, y_1^+\}$. Since $f_2$ is a proper confluent witness for $g_2$, we have $f_2(y^+)=f_2(y)=y_1$, and therefore $g_2(y^+)\in \{y_1, y_1^+\}$. Since $f_1$ is a proper confluent witness for $g_1$, we have $f_1(y_1^+)=f_1(y_1)=c_j$, and therefore $g_1(y_1), g_1(y_1^+)\in \{c_j, c_{j+1}\}$. This implies that $g(y), g(y^+)\in \{c_j, c_{j+1}\}$, and hence  $s_Y\neq y, y^+$.

Finally, we  check that for any $t\in [s_X, s_Y)$,  
$g(t) \in \{c_j,c_{j+1}\}$.
We have that  $f_0$ assumes value $c_j$ on $Y$ and $c_{j-1}$ on $X$. Therefore $g$ can only assume values from $\{c_j, c_{j+1}, c_{j+2}\}$ on $Y$ and values from $\{c_{j-1}, c_{j}, c_{j+1}\}$ on $X$. Since $[s_X,s_Y)\subseteq X\cup Y$, and
by the choice of $s_Y$ there is no $t\in [y, s_Y)$ such that $g(t)=c_{j+2}$,
we only have to show that there is no $t\in [s_X, y)$ such that $g(t)=c_{j-1}$. If $s_X=y$ there is nothing to show.
In the case $s_X\neq y$, suppose towards the contradiction that there is $t\in [s_X, x]$ such that $g(t)=c_{j-1}$. 
\blu{Since $g_2$ is an almost wrapping map, $g_2([s_X,t])$ is an arc containing $[g_2(s_X),g_2(t)]$, where one of the end-vertices is equal or adjacent to $g_2(s_X)$ and another one is equal or adjacent to $g_2(t)$. Thus $[g_2(s_X), g_2(t)]\subseteq g_2([s_X,t])$ implies
 $g_1([g_2(s_X), g_2(t)])\subseteq g([s_X,t])\subseteq [c_{j-1}, c_{j+1}]$. Since $c_{j-1}, c_{j+1}\in g_1([g_2(s_X), g_2(t)])$ and $g_1$ is a homomorphism, we have in fact $g_1([g_2(s_X),g_2(t)])=[c_{j-1}, c_{j+1}]$.
The equalities $g_1(g_2(s_X))=c_{j+1}$, $g_1(g_2(t))=c_{j-1}$, and $g_1([g_2(s_X),g_2(t)])=[c_{j-1}, c_{j+1}]$, contradict Lemma \ref{notswaptriples} applied to the almost wrapping map $g_1$ and to $c_{j-1}, c_{j+1}$.
}

\end{proof}

\subsection{Almost graph-solenoids as preimages of solenoids}

In this subsection we show that each solenoid in $|\mathbb G|$ comes from an almost graph-solenoid in $\mathbb G$  via the quotient map. 
\begin{theorem}\label{soltograph-main}
Let $S$ be a solenoid  embedded in $|\mathbb{G}|$. 
Then there is an almost graph-solenoid $\mathbb{S}\subseteq\mathbb{G}$ such that $\pi(\mathbb{S})=S$, where $\pi\colon \mathbb{G}\to |\mathbb{G}|$ is the quotient map.
\end{theorem}

Let $\mathbb{F}$ be a topological graph such that for every ${\bf x}\in \mathbb{F}$ there is at most one ${\bf y}\neq {\bf x}$, ${\bf y}\in \mathbb{F}$ with $\langle {\bf x}, {\bf y} \rangle \in E(\mathbb{F})$,  and let $\delta\colon \mathbb{F}\to |\mathbb{F}|$ be the quotient map. Let 
$$O_{\mathbb{F}}=\{{\bf x}\in\mathbb{F}\colon \text{ there is  no } {\bf y}\neq {\bf x}  \text{ with } \langle {\bf x}, {\bf y} \rangle \in E(\mathbb{F})\}$$
be the set of {\it singletons} of $\mathbb{F}$ and let 
 $$T_{\mathbb{F}}=\{{\bf x}\in\mathbb{F}\colon \text{ there is exactly one } {\bf y}\neq {\bf x}  \text{ with }  \langle {\bf x}, {\bf y} \rangle \in E(\mathbb{F})\}$$ 
 be the set of {\it doubletons}  of $\mathbb{F}$.  We have $\mathbb{F}=O_{\mathbb{F}}\cup T_{\mathbb{F}}$.

The set  $O_{\mathbb{F}}$ is $G_\delta$, i.e. it is the intersection of countably many open sets. Indeed, it is the intersection of
$$A_n=\{{\bf x}\in\mathbb{F}\colon d({\bf x},{\bf y})<\frac{1}{n} \text{ for every } {\bf y} \text{ such that } \langle {\bf x}, {\bf y} \rangle \in E(\mathbb{F})\},$$ where $d$ is some fixed metric on $\mathbb{F}$. Each $\mathbb{F}\setminus A_n$ is closed since the edge relation $E(\mathbb{F})$ is closed in $\mathbb{F}^2$ and  $\mathbb{F}$ is compact.

Note that if $\mathbb{F}_1\subseteq \mathbb{F}_2$ are topological graphs  
then $\mathbb{F}_1\cap O_{\mathbb{F}_2}\subseteq O_{\mathbb{F}_1}$, but the equality may not hold.

\begin{lemma}\label{singletons-dense}

The sets $O_{\mathbb{G}}$ and $T_{\mathbb{G}}$ are dense in $\mathbb{G}$. Therefore $O_{\mathbb{G}}$ is a dense $G_\delta$.
\end{lemma}

\begin{proof}
First note that if $G$ is a connected graph then there is a connected graph $H$ and a confluent (even monotone) $f^H_G\colon H\to G$ such that for each $a\in G$ there is $b\in (f^H_G)^{-1}(a)$ such that for any $c\in H$ with $\langle b,c \rangle\in E(H)$, we have $f^H_G(c)=a$. To get $H$ simply  split each edge $\langle x,y \rangle\in E(G)$ into three: $\langle x,x_1 \rangle$, $\langle x_1, y_1 \rangle$, $\langle y_1,y \rangle$, and let $f^H_G(x)=f^H_G(x_1)=x$ and $f^H_G(y)=f^H_G(y_1)=y$. 

    Let ${\bf U}$ be an open set in $\mathbb G$, $\{F_n, \alpha_n\}$ be  a Fra\"\i ss\'e sequence for the family $\mathcal{G}$, and pick $n_0$ and $a_0\in F_{n_0}$ such that $(\alpha^{\infty}_{n_0})^{-1}(a_0)\subseteq {\bf U}$.
    Using the extension property and the observation above, get $n_1$ and  $a_1\in F_{n_1}$ with  $\alpha^{n_1}_{n_0}(a_1)=a_0$ such that for any $b\in F_{n_1}$ with $\langle a_1, b\rangle$ we have $\alpha^{n_1}_{n_0}(b)=a_0$. Continue this and get a sequence $n_0<n_1<n_2\ldots$ and $a_i\in F_{n_i}$ such that 
    $\alpha^{n_{i+1}}_{n_i}(a_{i+1})=a_i$ and
    for any $b\in F_{n_{i+1}}$ with $\langle a_{i+1}, b\rangle$ we have $\alpha^{n_{i+1}}_{n_i}(b)=a_i$. Then the sequence $(a_i)$ induces a vertex in $\mathbb{G}$, which is a singleton in ${\bf U}$. This finishes the proof of density of $O_{\mathbb{G}}$.

Since all graphs in $\mathcal{G}$ are connected, by Condition (3) of Theorem \ref{limit}, it follows that for any nonempty proper clopen ${\bf U}$ in $\mathbb{G}$, there is an edge joining ${\bf U} $ and its complement. In particular, $T_{\mathbb{G}}$ is dense. 

\end{proof}

\begin{lemma}\label{densesingle}
Let ${\bf F}_0$ be a topological graph  such that ${\bf F}_0=O_{{\bf F}_0}\cup T_{{\bf F}_0}$  and let $\delta\colon {\bf F}_0\to |{\bf F}_0|$ be the quotient map.  Then there is a topological graph ${\bf F}\subseteq {\bf F}_0$ such that  $O_{\bf F}$ is dense in ${\bf F}$ and $\delta({\bf F})=|{\bf F}_0|$.
\end{lemma}

 \begin{proof} 
Let ${\bf U}_1, {\bf U}_2, {\bf U}_3, \ldots$ be a countable open base for ${\bf F}_0$ with ${\bf U}_i \not = \emptyset$. We will proceed by the following recursive procedure. Suppose at step $n-1$ we have constructed a closed subset ${\bf F}_{n-1}\subseteq{\bf F}_0$.
 If for every ${\bf x}\in  {\bf F}_{n-1}\cap {\bf U}_n$ there is ${\bf y}\in {\bf F}_{n-1}\setminus {\bf U}_n$ such that  $\langle {\bf x}, {\bf y} \rangle \in E({\bf F}_{n-1})$,  then take ${\bf F}_n={\bf F}_{n-1}\setminus {\bf U}_n$. Otherwise, let ${\bf F}_n={\bf F}_{n-1}$.
Clearly $\delta({\bf F}_n)= |{\bf F}_0|$. 
 
We claim that the set ${\bf F}=\bigcap_n {\bf F}_n$ is as required. Clearly  $\delta({\bf F})= |{\bf F}_0|$. 
Suppose towards a contradiction that $O_{{\bf F}}$ is not dense in ${\bf F}$. 
Let ${\bf U}$ be an open nonempty set in ${\bf F}$ such that
 for every ${\bf x}\in {\bf U}$ there is
${\bf y}\in {\bf F}$, ${\bf y}\neq {\bf x}$, so that $\langle {\bf x}, {\bf y} \rangle \in E({\bf F})$. 
 Fix a metric $d$ on ${\bf F}$ and
 let $$ {\bf D}_n=\{{\bf x}\in {\bf U}\colon \text{ there is  }  {\bf y}\in {\bf F} \text{ with } 
 \langle {\bf x}, {\bf y} \rangle \in E({\bf F}) \text{ and } d({\bf x},{\bf y})\geq \frac{1}{n}\}.$$
 Each ${\bf D}_n$ is closed in ${\bf U}$ and  we have $\bigcup_n {\bf D}_n={\bf U}$. 
By the Baire category theorem for the open set ${\bf U}$ there is $n_0$ such that ${\bf D}_{n_0}$ has a nonempty interior. 
Take ${\bf V}\subseteq {\rm int}({\bf D}_{n_0})$ open nonempty of diameter $<\frac{1}{n_0}$. 
Therefore, for every ${\bf x}\in {\bf V}$, there is ${\bf y}\in {\bf F}\setminus {\bf V}$ with  $\langle {\bf x}, {\bf y} \rangle \in E({\bf F})$. 
Let $n$ be such that  $\emptyset \not = {\bf U}_n\cap {\bf F}\subseteq {\bf V}$, and without loss of generality by shrinking ${\bf V}$, we have ${\bf U}_n\cap {\bf F}= {\bf V}$. 
 This means that ${\bf U}_n$ was subtracted in the construction of ${\bf F}$, which gives a contradiction. Indeed, if ${\bf U}_n $ was not subtracted at stage $n$, then one of the following would have to happen. The first possibility is that there is ${\bf x}\in {\bf U}_n\cap O_{{\bf F}_{n-1}}$.  However, then ${\bf x}\in {\bf F}\setminus {\bf V}$,
 which is impossible. The second possibility is that
there are ${\bf x}\neq {\bf y} \in {\bf U}_n$ with  $\langle {\bf x}, {\bf y} \rangle \in E({\bf F}_{n-1})$. However, then ${\bf x},{\bf y}\notin {\bf V}$, hence also ${\bf x},{\bf y}\notin {\bf F}$, which cannot be the case as $\delta({\bf F})= |{\bf F}_0|$. 
 \end{proof}

\begin{example}
Let ${\bf F}$ be a  topological graph such that ${\bf F}=O_{{\bf F}}\cup T_{{\bf F}}$  and let $|{\bf F}|$ be its topological realization.  Suppose that $O_{{\bf F}}$ is uncountable and dense in ${\bf F}$.
Then there is ${\bf F}_1 \supseteq {\bf F}$ with the same topological realization, ${\bf F}_1=O_{{\bf F}_1}\cup T_{{\bf F}_1}$, but  $O_{{\bf F}_1}$ is not dense in ${\bf F}_1$.

Indeed,  $O_{{\bf F}}$ is an uncountable dense $G_\delta$ in ${\bf F}$ by Lemma \ref{singletons-dense}. Therefore it contains a copy $C$ of the Cantor set. Let $D$ be another copy of the Cantor set and let $h\colon C\to D$ be a homeomorphism. We take a topological graph ${\bf F}_1$ obtained as follows.
Take $V({\bf F}_1)= V({\bf F})\cup V(D)$ and let   $\langle {\bf x},{\bf y} \rangle \in E({\bf F}_1)$ iff ${\bf x}={\bf y}$ or (${\bf x},{\bf y}\in {\bf F}$ and $\langle {\bf x},{\bf y} \rangle \in E({\bf F})$)
or (${\bf x}\in D$, ${\bf y}\in C$, and ${\bf x}=h({\bf y})$) or 
(${\bf y}\in D$, ${\bf x}\in C$, and ${\bf y}=h({\bf x})$). 
 We declare $D$ to be a clopen in ${\bf F}_1$.
Then ${\bf F}_1$ is as required as $D\subseteq T_{{\bf F}_1}$.
\end{example}

Let $S$ be a solenoid embedded in $|\mathbb G|$ and write it as the inverse limit  of $\{X_n, w_n\}$, where $X_n$ is the unit circle $S^1$ and  $w_n\colon X_{n+1}\to X_n$ is given by $w_n(e^{it})=e^{i l_n t}$, for some $l_n\in\mathbb{N}\setminus\{ 0,1\}$. The canonical projection from $S$ to the $n$th factor space $X_n$ we denote by $w^\infty_n$. Further, by Lemma \ref{densesingle}, we can assume that $S$ is a topological realization of a topological graph $\mathbb{S}\subseteq \mathbb{G}$ such that $O_{\mathbb{S}}$  is dense in $\mathbb{S}$.
 Recall that $\pi\colon \mathbb{G}\to |\mathbb G|$ is the quotient map and 
 denote  $\pi_S=\pi|_{\mathbb{S}}$.

For ${\bf A}\subseteq \mathbb{S}$ we let $k({\bf A})$ to be the closure in $\mathbb{S}$ of the set of the singletons of ${\bf A}$, i.e. $k({\bf A})=\overline{O_{\bf A}}$. 
We will apply this operation to sets of the form $(w^\infty_n\pi_S)^{-1}(P)$, where $P$ is an arc $[a=e^{it_1}, b=e^{it_2}]$ (respectively an open arc $(a,b)$) in $S^1=X_n$. Denote $(w^\infty_n\pi_S)^{-1}(P)$ by 
$[a,b]_n$ (respectively $(a,b)_n$). 

\begin{lemma}\label{properties of [a,b]n}
\begin{enumerate}
    \item If ${\bf A}=[a,b]_n$, then 
$k({\bf A})= \overline {(a,b)_n}=\overline{O_{\mathbb{S}}\cap {\bf A}}$. In that case $k({\bf A})$ is a regular closed set.
\item If $[c,d]\subseteq [a,b]$ and $n\in\mathbb{N}$, then $k([c,d]_n)\subseteq k([a,b]_n)$.
\item Suppose that $[a_1, b_1], [a_2, b_2],\ldots, [a_m, b_m]$ is a cover of $S^1$ by arcs such that $b_i=a_{i+1}$, $i=1,\ldots, m-1$ and $b_m=a_1$ and the interiors of the arcs are disjoint. Then for every $n\in\mathbb{N}$,
$k([a_1, b_1]_n)$, $k([a_2, b_2]_n),\ldots, k([a_m, b_m]_n)$ is a cover of $\mathbb{S}$. 
\item Let $[a,b]$ and $[b,c]$ be arcs in $S^1$ such that $[a,b]\cap [b,c]=\{b\}$, $A=(w^\infty_n)^{-1}([a,b])$ and  $C=(w^\infty_n)^{-1}([b,c])$ be sets in the solenoid $S$, and $y\in A\cap C$. If $\pi_S^{-1}(y)=\{{\bf x}\}$, then ${\bf x}\in k([a,b]_n)\cap k([b,c]_n)$.
If  $\pi_S^{-1}(y)=\{{\bf x}_1,{\bf x}_2\}$, ${\bf x}_1\neq {\bf x}_2$, 
then at least one of ${\bf x}_1,{\bf x}_2$ is in $k([a,b]_n)$ and at least one of ${\bf x}_1,{\bf x}_2$ is in $k([b,c]_n)$. 

In particular, there is an edge (possibly a degenerate one) between $k([a,b]_n)$ and $k([b,c]_n)$.

\end{enumerate}
\end{lemma}
\begin{proof} 
(1) Since $O_{[a,b]_n}\cap (a,b)_n$ is dense in $(a,b)_n$, we get $\overline {(a,b)_n}\subseteq k({\bf A})$. To obtain the reverse inclusion we additionally have to show that any vertex in ${\bf x}\in O_{[a,b]_n}\setminus (a,b)_n$ is a limit of vertices in $(a,b)_n$. For that note that $(w^\infty_n)^{-1}((a,b))$ is dense in $(w^\infty_n)^{-1}([a,b])$.
Therefore $\pi_S({\bf x})$ is a limit of vertices $(z_k)$ in $(w^\infty_{n})^{-1}((a,b))$. Let ${\bf y}_k\in (a,b)_n$ be 
such that $\pi_S({\bf y}_k)=z_k$.  After passing to a subsequence we can assume 
that $({\bf y}_k)$ is convergent to some ${\bf y}\in [a,b]_n$. Since $\pi_S({\bf x})=\pi_S({\bf y})$ 
and ${\bf x}\in O_{[a,b]_n}$, we obtain ${\bf x}={\bf y}$. 

Since $(a,b)_n\subseteq \overline{O_{\mathbb{S}}\cap {\bf A}}\subseteq \overline{O_{\bf A}}$, the first part implies $k({\bf A})=\overline{O_{\mathbb{S}}\cap {\bf A}}$.

(2) \blu{
It follows immediately from (1), since $[c,d]_n\subseteq [a,b]_n$ implies 
$k([c,d]_n)= \overline{O_{\mathbb{S}}\cap  [c,d]_n}\subseteq \overline{O_{\mathbb{S}}\cap  [a,b]_n}= k([a,b]_n)$
}

(3) We know that $[a_1, b_1]_n, [a_2, b_2]_n,\ldots, [a_m, b_m]_n$ is a cover of $\mathbb S$ and $O_{\mathbb S}$ is dense in $\mathbb S$. Therefore
$$\mathbb{S}=\overline{\bigcup_{i=1}^m  O_{[a_i, b_i]_n}}=\bigcup_{i=1}^m \overline{O_{[a_i, b_i]_n}}=
\bigcup_{i=1}^m k([a_i, b_i]_n).$$

(4) The first claim is clear.
For the second claim, let $(a_n)$ be a sequence in
$A_0=(w^\infty_n)^{-1}((a,b))$ converging to $ y$, and let $(c_n)$ be a sequence in $C_0=(w^\infty_n)^{-1}((b,c))$ converging to $y$. 
Let ${\bf a}'_n$ be any vertex in $\pi_S^{-1}(A_0)$ such that $\pi_S({\bf a}'_n)=a_n$, and similarly let ${\bf c}'_n$ be any vertex in $\pi_S^{-1}(C_0)$ such that $\pi_S({\bf c'_n})=c_n$. Without loss of generality, by passing to a subsequence, $({\bf a}'_n)$ and $({\bf c}'_n)$ are convergent.
Since $\pi_S^{-1}(A_0)$ and $\pi_S^{-1}(B_0)$ are open there are $({\bf a}''_n)$ in $\pi_S^{-1}(A_0)\cap O_{\mathbb{S}}$ and  $({\bf c}''_n)$ in $\pi_S^{-1}(C_0)\cap O_{\mathbb{S}}$ converging to the same limit as  $({\bf a}'_n)$ and $({\bf c}'_n)$, respectively.
If ${\bf p}=\lim {\bf a}''_n$ and ${\bf q}=\lim {\bf c}''_n$, then we have ${\bf p}\in [a,b]_n$, ${\bf q}\in [b,c]_n$, and $\pi_S({\bf p})=\pi_S({\bf q})=y$.
\end{proof}

\begin{theorem}\label{soltograph}
Let $S$ be a solenoid and let $\mathbb{S}$ be a topological graph whose topological realization is $S$ via a quotient map $\pi_S\colon \mathbb{S}\to S$. Suppose that $O_{\mathbb{S}}$ is dense in $\mathbb{S}$. Then  $\mathbb{S}$ is an almost graph-solenoid. 

Moreover, if $S=\iLim\{X_n=S^1, w_n\}$, where  $w_n\colon X_{n+1}\to X_n$ is given by $w_n(e^{it})=e^{i l_n t}$, for some $l_n\in\mathbb{N}\setminus\{ 0,1\}$, then  
$\mathbb{S}=\iLim\{C_{k_n}, g_n\}$, where $C_{k_n}$'s are cycles, $g_n$'s are almost wrapping maps,  the winding number of $g_n$ is $l_{k_n}\ldots l_{(k_{n+1})-1}$, and $g_n$ has a proper confluent witness.

\end{theorem}

\begin{proof}
Let $\mathcal{P}_1$ be the cover of $X_1$: 
$$\{ [e^{0}, e^{i2\frac{\pi}{5}}], [e^{i2\frac{\pi}{5}}, e^{i 2\frac{2\pi}{5}}],\ldots,
[e^{i2\frac{4\pi}{5}}, e^{i 2\frac{5\pi}{5}}]\},$$ and for every $n>1$ let  $\mathcal{P}_n$ be the cover of $X_n$: 
$$\{[e^{i \frac{m 2\pi}{L_n}}, e^{i \frac{(m+1)2\pi}{L_n}}]\colon m=0,1,\ldots, L_n-1\},$$
where $L_n=5\cdot 2^{n-1}\cdot l_1\cdot l_2\cdot \ldots \cdot l_{n-1}$.

Let $\mathcal{D}_n$ be the cover of $S$  defined as 
$$\mathcal{D}_n=\{(w_n^{\infty})^{-1}(P)\colon P\in \mathcal{P}_n\}.$$

For every $n$ consider the cover $\mathcal{E}_n=\{k({\bf E}) \colon {\bf E}\in \pi_S^{-1}(\mathcal{D}_n)\}$ of $\mathbb{S}$  by closed sets.  
It follows from Lemma~\ref{properties of [a,b]n}(3) that this is indeed a cover. Let $C_n$ be the graph where $V(C_n)=\mathcal{E}_n$ and $\langle {\bf E},{\bf F} \rangle\in E(C_n)$ 
 iff there is an edge (possibly degenerate) between a vertex in ${\bf E}$ and a vertex in ${\bf F}$. Using Lemma  \ref{properties of [a,b]n}(4), we have that $C_n$ is a cycle. 
 Moreover, whenever $n>m$, the cover $\mathcal{E}_n$ refines $\mathcal{E}_m$ and this refinement induces a map $f^n_m$ from $C_n$ onto $C_m$. Since for any $[c,d]\in\mathcal{P}_n$ and $[a,b]\in \mathcal{P}_m$, we have $f^n_m(k([c,d]_n))=k([a,b]_m)$ iff $w^n_m([c,d])\subseteq [a,b]$, the $f^n_m$
 is a confluent epimorphism whose winding number is equal to $l_{m}\cdot \ldots \cdot l_{n-1}$ and any component in the preimage of a vertex contains at least two vertices.

 Let $d_0$ be a metric on $S^1$. If $x,y \in S$ then $x$ and $y$ may be written as $x=(x_1, x_2,\ldots )$ and $y=(y_1,y_2,\ldots)$.  Let $d_S$ denote the metric on $S$ defined as $d_S(x,y)=\sum_{n=1}^\infty d_0(x_n,y_n)/L_n$, 
 and let $d$  be any metric on $\mathbb{S}$. Then  for every $\varepsilon>0$ there is $N$ such that for every $n>N$ and every ${\bf E}\in \mathcal{E}_n$, ${\rm diam}({\bf E})<\varepsilon$. Indeed, 
 note that $\pi_S |_{O_\mathbb{S}} $ is a homeomorphism onto its image. Therefore there is $M$ such that for any ${\bf u},{\bf v}\in O_\mathbb{S}$  we have $d({\bf u},{\bf v})\leq M d_S(\pi_S({\bf u}), \pi_S({\bf v}))$. By the definition of $d_S$ there is $N$ such that for every $n>N$ and every $D\in \mathcal{D}_n$, ${\rm diam}(D)<\frac{\varepsilon}{M}$. Then 
  ${\rm diam}(O_{\pi_S^{-1}(D)})<\varepsilon$. Since a set and its closure have the same diameter, we get 
  ${\rm diam} (k(\pi_S^{-1}(D)))<\varepsilon$.

Now we enlarge the sets in $\mathcal {E}_n$ to obtain a cover of clopen  sets $\mathcal{U}_n=\{{\bf U_E}\colon {\bf E}\in\mathcal{E}_n\}$.
We  require that ${\bf E}\subseteq {\bf U_E}$ and for every $n$ and ${\bf U_E}, {\bf U_F}\in\mathcal{U}_n$, there is an edge between a vertex in ${\bf U_E}$ and a vertex in ${\bf U_F}$ iff $\langle {\bf E}, {\bf F}\rangle\in E(C_n)$. Moreover, let ${\rm diam}({\bf U_E})<2{\rm diam}({\bf E})$, and let whenever $n>m$, the cover $\mathcal{U}_n$ refine $\mathcal{U}_m$.

Enumerate $\mathcal{U}_n$ into ${\bf U}_1,{\bf U}_2,\ldots, {\bf U}_{L_n}$ so that for every $i$, there is an edge between a vertex in ${\bf U}_i$ and a vertex in ${\bf U}_{i+1}$ (we let ${\bf U}_{L_n+1}={\bf U}_1$). We now let ${\bf V}_i={\bf U}_i\setminus {\bf U}_{i+1}$ and $\mathcal{V}_n=\{{\bf V}_i\colon i=1,\ldots, L_n\}$. Then $\mathcal V_n$ is a clopen partition of $\mathbb S$.

There is an edge between a vertex in ${\bf V}_i$ and a vertex in
${\bf V}_{i+1}$, for each~$i$. Indeed, since $\mathbb{S}$ is connected, the only other possibility would be that there is a single $i_0$ such that there is no edge between a vertex in ${\bf V}_{i_0}$ and a vertex in ${\bf V}_{i_0+1}$. However, this is impossible, since the same would have to be true for the cover $\mathcal{V}_{n+1}$. However, the winding number of the confluent epimorphism obtained from the inclusion of $\mathcal{E}_{n+1}$ into $\mathcal{E}_n$ is at least 2.

Since for every $\varepsilon>0$ there is $N$ such that for every $n>N$ and every ${\bf U}\in \mathcal{U}_n$, ${\rm diam}({\bf U})<\varepsilon$, using the Lebesgue covering lemma, we pass to a subsequence of covers $(\mathcal{V}_{k_n})$ so that for each $n$, $\mathcal{V}_{k_{n+1}}$ refines
$\mathcal{V}_{k_n}$. 
Let $g_n\colon C_{k_{n+1}}\to C_{k_n}$ be a surjective homomorphism obtained from the inclusion of $\mathcal{V}_{k_{n+1}}$ into
$\mathcal{V}_{k_n}$. If ${\bf V}^{n+1}_j={\bf U}^{n+1}_j\setminus {\bf U}^{n+1}_{j+1}\in \mathcal{V}_{k_{n+1}}$ and ${\bf U}_i^n\in \mathcal{U}_{k_n}$ is such that ${\bf U}^{n+1}_j\subseteq {\bf U}_i^n$ then, since ${\bf U}_i^n\cap {\bf U}_{i+2}^n=\emptyset$, we have ${\bf V}^{n+1}_j\subseteq {\bf V}_i^n\in \mathcal{V}_{k_{n}}$ or 
${\bf V}^{n+1}_j\subseteq {\bf V}_{i+1}^n\in \mathcal{V}_{k_{n}}$, where 
${\bf V}_i^n={\bf U}_i^n\setminus {\bf U}_{i+1}^n$ and ${\bf V}_{i+1}^n={\bf U}_{i+1}^n\setminus {\bf U}_{i+2}^n$. Therefore if $\hat{f}_n=f^{k_{n+1}}_{k_n}$, we have that if $\hat{f}_n(y)=c_i$, then $g_n(y)\in \{c_i, c_{i+1}\}$, where $C_{k_n}=(c_0,c_1,\ldots, c_{L_{k_n}}=c_0)$. Thus $g_n$ is an almost wrapping map as required.

\end{proof}

\begin{proof}[Proof of Theorem \ref{soltograph-main}]
Apply  Lemma \ref{densesingle} to $\pi^{-1}(S)$ and then Theorem \ref{soltograph}.
\end{proof}

\subsection{Solenoids that do not embed in $|\mathbb{G}|$}

The goal of this subsection is the following result.
\begin{theorem}\label{notuniversal}
Let a solenoid $S$ embed in $|\mathbb{G}|$. Then $S$ is the universal solenoid.
\end{theorem}

We first need a lemma.
\begin{lemma}\label{phomomorph}
Let $\{H_i, h_i\}_{i \ge 0}$ be an inverse sequence of finite graphs where the $h_i$'s are  homomorphisms and let ${\bf{H}}=\iLim\{H_i, h_i\}$. Let $p\colon H_0\to K$ be a  map to a finite graph $K$. If $p\circ h^\infty_0$ is a homomorphism then there is $N$ such that for every $t\geq N$, the map $p\circ h^t_0$ is a homomorphism. 
\end{lemma}

\begin{proof}
It suffices to show that for every $a,b\in K$ such that $\langle a,b \rangle \notin E(K)$ there is $N$ such that for every $t\geq N$ if $x,y\in H_t$, $p\circ h^t_0(x)=a$, and $p\circ h^t_0(y)=b$, then  $\langle x,y \rangle \notin E(H_t)$.  
Suppose towards the contradiction that this is not the case and let $a,b\in K$ be such that $\langle a,b \rangle \notin E(K)$ and for infinitely many $t$ we have $x,y\in H_t$ with $p\circ h^t_0(x)=a$,  $p\circ h^t_0(y)=b$, and  $\langle x,y \rangle \in E(H_t)$. 

For each $t$ let $L_t$ denote the set of pairs $(x,y)\in H_t\times  H_t$ with $\langle x,y\rangle \in E(H_t)$,
$p\circ h^t_0(x)=a$, and $p\circ h^t_0(y)=b$. Let $L=\bigcup_{t\geq 0} L_t$ be a disjoint union and consider the partial order on $L$ given by: for  $( x,y)\in H_t\times  H_t$ and  $( x',y')\in H_{t'}\times  H_{t'}$ we let 
$( x,y)\precsim ( x',y')$ iff $h^{t'}_t(x')=x$ and $ h^{t'}_t(y')=y$. 
Since each $h_t$  is a homomorphism, for every $t \ge 0$,
if $(x,y)\in L_{t+1}$, then $(h_t(x), h_t(y))\in L_t$.
There are: (1) finitely many minimal elements in  $(L, \precsim)$,  (2) for every $(x,y)\in L_t$ there are finitely many $(x',y')\in L_{t+1}$ with $(x,y)\precsim (x',y')$ and, by our assumption, (3) $L$ is infinite.
 Therefore, by the K\"onig's Lemma (see for example \cite[Section 12.3]{HJ}), there is an infinite chain 
$(( x_t,y_t))_{t\geq 0}$ in   $(L, \precsim)$ with $(x_t,y_t)\in L_t$. This means that for every $t$, $h_t(x_{t+1})=x_t$, $h_t(y_{t+1})=y_t$, and $\langle x_t,y_t\rangle \in E(H_t)$, i.e. $\langle (x_t), (y_t) \rangle \in E(\bf{H})$. 
However, since $ p\circ h_0^{\infty}( (x_t) )=a$ and $ p\circ h_0^{\infty} ( (y_t))=b$, this contradicts that $ p\circ h_0^{\infty}$ is a homomorphism. 
\end{proof}

\begin{proof}[Proof of Theorem \ref{notuniversal}]

Let $S$ be a solenoid  embedded in $|\mathbb{G}|$. Write $S$ as $\iLim\{X_n=S^1, u_n\}$, where  $u_n\colon X_{n+1}\to X_n$ is given by $u_n(e^{it})=e^{i l_n t}$, for some $l_n\in\mathbb{N}\setminus\{ 0,1\}$.
Then by Theorems \ref{soltograph-main} and \ref{soltograph} there is an almost graph-solenoid $\mathbb{S}\subseteq\mathbb{G}$ such that $\pi(\mathbb{S})=S$, where $\pi\colon \mathbb{G}\to |\mathbb{G}|$ is the quotient map. Moreover, $\mathbb{S}$ can be  realized as $\iLim\{C_n, w_n\}$, where $C_n$'s are cycles, $w_n$'s are almost wrapping maps,  the winding number of $w_n$ is $l_n$, and $w_n$ has a proper confluent witness. If $S$ is not universal, then 
there is a prime number $P$  and an integer $i_0$ such that for every $i>i_0$ the winding number of $w_{i_0}^i$ is not divisible by $P$. 
Towards the contradiction suppose that  there is an embedding
$\tau\colon \mathbb{S}\to \mathbb{G}
(=\iLim\{F_n,\alpha_n\}$), where $\{F_n,\alpha_n\}$ is a Fra\"{\i}ss\'{e} sequence for $\mathcal G$. Therefore $\tau(\mathbb{S})=\iLim\{S_n,\alpha_n|_{S_{n+1}}\}$, where $$V(S_n)=V(\alpha_n^\infty(\tau(\mathbb{S})))$$ and $$\langle x,y \rangle \in E(S_n) \text{ iff }\langle x,y\rangle\in E(F_n).$$

 We  construct
 increasing sequences $(n_i)$ and $(m_i)$ and surjective homomorphisms $p_i\colon S_{n_i}\to  C_{m_i}$, $q_i\colon C_{m_{i+1}}\to S_{n_i}\subseteq F_{n_i}$ with $p_i\circ q_i= w^{m_{i+1}}_{m_i}$ and 
$q_i\circ p_{i+1}=\alpha^{n_{i+1}}_{n_i} |_{S_{n_{i+1}}}$.

Suppose that we have already constructed $(n_j)$, $(m_j)$, $p_j$, $q_j$, for $j<i$. We now construct $p_i$.
Apply Lemma \ref{laddersur} to $f_1=w^\infty_{m_{i}}$ and get $k$ and a surjection 
$p\colon S_k\to C_{m_i}$ such that
$f_1=p\circ\alpha^\infty_{k}\circ \tau$.
Apply Lemma \ref{phomomorph} to $\{S_i, \alpha_i|_{S_{i+1}}\}_{i\geq k}$ and $p$. We can do this since $p\circ \alpha^\infty_k\circ\tau=w^\infty_k$, $w^\infty_k$ is a homomorphism, $\tau $ is an embedding, and hence $p\circ \alpha^\infty_k|_{\tau(\mathbb{S})}$ is a homomorphism. Let $N$ be as in the conclusion of Lemma  \ref{phomomorph} and let $n_{i}=N$. We let $p_i= p\circ \alpha_k^N|_{S_N}$.

Now apply Lemma \ref{laddersur} to $f_2=\alpha^\infty_{n_{i}}\circ\tau$ and get $m_{{i+1}}$ and 
 a surjection $q_{i}\colon C_{m_{i+1}}\to S_{n_i}$ such that 
$f_2=q_{i}\circ w^\infty_{m_{i+1}}$.
We show that each $q_i$ is a homomorphism. Indeed, let $\langle a,b\rangle \in E(C_{m_{i+1}})$. Then there is $\langle  {\bf a}',{\bf b}'\rangle \in E(\mathbb{S})$ with 
 $w^\infty_i({\bf a}')=a$ and  $w^\infty_i({\bf b}')=b$.
 Then $q_i(a)= \alpha^\infty_{n_i}\tau({\bf a}')$, $q_i(b)= \alpha^\infty_{n_i}\tau({\bf b}')$, and 
$\langle \alpha^\infty_{n_i}\tau({\bf a}'),\alpha^\infty_{n_i}\tau({\bf b}')\rangle \in E(F_{n_i})$, as needed.

Without loss of generality, we have $m_0>i_0$, i.e. for every $i>m_0$ the winding number of $w_{m_0}^i$ is not divisible by $P$.
Denote $C=C_{m_0}$, $D=C_{m_1}$, and $X=S_{n_0}$. Therefore we have homomorphisms $p_0\colon X\to C$ and $q_0\colon D\to X$ such that
$w^{m_1}_{m_0}=p_0\circ q_0$.

We will show first that 
for every finite connected graph $Z$ and for every confluent epimorphism
$f\colon Z\to X$  there exist $j\geq 1$ and a homomorphism $r\colon C_{m_j}\to Z$ with $q_0\circ w^{m_j}_{m_1} = f\circ r$. Then we will construct $Z$ and $f$ that do not have that property, which will contradict the existence of the embedding $\tau$. 

Let $Z$ be a finite connected graph and let $f\colon Z\to X$ be a confluent epimorphism.
By Lemma~\ref{extension} we can find a finite connected graph $Z^*$ such that  $Z\subseteq Z^*$ as well as a confluent epimorphism
$f^*\colon Z^*\to F_{n_0}$ such that $f^*|_Z=f$ and $(f^*)^{-1}(X)=Z$. Since $\{F_n, \alpha_n\}$ is a Fra\"{\i}ss\'{e} sequence we find $j$ and a confluent epimorphism $g^*\colon F_{n_j}\to Z^*$ such that 
$f^*\circ g^*= \alpha^{n_j}_{n_0}$. Let $g=g^*|_{S_{n_j}}$. Note that $f^*\circ g^*= \alpha^{n_j}_{n_0}$ together with $(f^*)^{-1}(X)=Z$ imply that the range of $g$ is contained in $Z$,
and hence $f\circ g= \alpha^{n_j}_{n_0}|_{S_{n_j}}$.
As $g^*$ is a homomorphism, so is $g$, and therefore  $r\colon C_{m_{j+1}}\to Z$ given by $r=g\circ q_j$ is a homomorphism.
This implies $f\circ g\circ q_j= (\alpha^{n_j}_{n_0}|_{S_{n_j}})\circ q_j$, i.e. $f\circ r=q_0\circ w^{m_{j+1}}_{m_1} $.

We now construct  a  finite connected graph $Z$ and a confluent epimorphism
$f\colon Z\to X$ such that for every $j\geq 1$ there is no homomorphism $r\colon C_{m_j}\to Z$ for which $q_0\circ w^{m_j}_{m_1} = f\circ r$ holds.  
Fix some $a,b\in C$ with $a<_C b$, and let $A=p_0^{-1}(a)$ and $B=p_0^{-1}(b)$. 
Consider  $P$ many copies of $X$, and name them as $X_1, X_2, \ldots, X_{P}$.
Let $A_i\subseteq X_i$ be the copy of $A$ and similarly let $B_i\subseteq X_i$ be the copy of $B$. Take the disjoint union of all $X_1, X_2, \ldots, X_{P}$. Remove all edges between a vertex in $A_i$ and a vertex in $B_i$, $i=1,2,\ldots, P$. Add corresponding edges between a vertex in $A_i$ and a vertex in $B_{i+1}$, $i=1,2,\ldots, P$, where $B_{P+1}=B_1$.
Call the obtained structure $Z'$  and let $f'\colon Z' \to X$ be the obvious projection map.
Then $f'$ is confluent. Let $Z$  be any  component of $Z'$ and let $f=f'|_Z$. Then $f\colon Z\to X$ is confluent by Proposition \ref{confluent-restrictions} and is as required. 
Indeed suppose there exist $j\geq 1$ and a homomorphism $r\colon C_{m_j}\to Z$ with $q_0\circ w^{m_j}_{m_1} = f\circ r$.
 If $M$ is a proper component of $(w_{m_0}^{m_j})^{-1}(C\setminus\{b\})$, then there is $i$ such that $r(M)\subseteq X_i$. 
If $M$ and $M'$ are proper components of $(w_{m_0}^{m_j})^{-1}(C\setminus\{b\})$ and $M'$ is proper component adjacent to $M$, there is $i$ such that $r(M)\in X_i$ and $r(M')\in X_{i+1}$.
These remarks together with  Observation \ref{adjac} imply that the winding number of $w_{m_0}^{m_j}$ is divisible by $P$, which gives the desired contradiction.

\end{proof}

\subsection{Points in $|\mathbb{G}|$ that do not belong to a solenoid.  }

In this subsection we accomplish the following result.
\begin{theorem}\label{mainsol}
The set of points in $|\mathbb{G}|$ that do not belong to a solenoid is dense.
\end{theorem}

We first present a construction in which given a finite connected graph $B$ and an increasing sequence $\emptyset\neq X_0 \subsetneq X_1\subsetneq\ldots\subsetneq X_{n-1}\subsetneq X_n=B$ of connected subgraphs of $B$ we obtain a finite connected graph $C$ containing copies of the graphs $X_i$. An important feature of $C$ will be that if the vertices in each of these copies of an $X_i$ in $C$ are identified to a single vertex then the resulting graph will be a tree. Then, using this construction repeatedly, we obtain a Fra\"{\i}ss\'e sequence $\{K_n, \beta_n\}$ for $\mathcal{G}$
together with a vertex ${\bf x}=(x_n) \in \mathbb G$ with $x_n\in K_n$ having the property we call star adjacently disconnecting. We show in Theorem~\ref{mainfund} that the set of star adjacently disconnecting vertices is dense in $\mathbb G$ and in Theorem~\ref{almgrasol} that star adjacently disconnecting vertices do not belong to an almost graph-solenoid. To finish, we only need to transfer these results to the topological realization of $\mathbb G$. 

\bigskip

\noindent {\bf{Construction of $C$.}}

Given a finite connected graph $B$ and $\emptyset\neq X_0 \subsetneq X_1\subsetneq\ldots\subsetneq X_{n-1}\subsetneq X_n=B$, an increasing sequence of connected subgraphs of $B$, we construct a finite connected graph $C$ as the union of disjoint copies of the graphs $X_n$, such that some of the $X_n$'s are connected by edges. We will call the graph $C$ the unfolding of $B$ with respect to the sequence $(X_i)$. If we identify the vertices of each $X_i$ in $C$ the resulting quotient space will be a rooted tree, $T$. The vertex $X_0$ will occur exactly one time and will be the root of $T$. We call $X_0$ the kernel. If $X_i$ and $X_j$ are vertices in $T$ such that $X_j$ is a successor in $T$ of $X_i$ then $j>i$. A vertex $X_i$ is an end-vertex of $T$ if and only if $i=n$.

We will iteratively construct finite connected graphs $Y_0\subseteq Y_1\subseteq\ldots\subseteq~Y_n$ and then we let $C=Y_n$.

\underline{Step 0:} Take $Y_0=X_0$.

\underline{Step 1:}   
We construct $Y_1$. For every $x\in Y_0$
and $t\in B\setminus X_0$ with $\langle x,t\rangle\in E(B)$ 
take  $X^{x,t}$ to be an isomorphic copy of $X_{i+1}$, where $i$ is such that $t\in X_{i+1}\setminus X_i$ and fix an isomorphism between $X_{i+1}$ and $X^{x,t}$.
Denote the copy of $t$ in $X^{x,t}$ by $\tau(X^{x,t})$. So $t$ is in $X_{i+1}$ and $\tau(X^{x,t})$ is the corresponding vertex in $X^{x,t}$ with respect to the fixed isomorphism between the two graphs.
Then $Y_1$ is the graph $Y_0$ together with all of the graphs $X^{x,t}$ for $x\in Y_0$ and corresponding vertices $t$ and with  edges joining $x\in Y_0$ with $\tau(X^{x,t})$.

\underline{Step $1<k<n$:}
We have already constructed $Y_0\subseteq Y_1\subseteq\ldots\subseteq Y_k$.

To construct $Y_{k+1}$ note that $Y_k\setminus Y_{k-1}$ is the disjoint union of isomorphic copies of the graphs $X_j$ for various $j \ge k-1$. 
For a vertex $x\in Y_k\setminus Y_{k-1}$, denote by  $\psi_x$ the isomorphism from $X_j$ to the isomorphic copy of $X_j$ that contains $x$. If $t \in B\setminus X_j$ is such that $\langle \psi^{-1}_x(x),t\rangle \in E(B)$ take $X^{x,t}$ to be an isomorphic copy of the graph $X_{i+1}$, where $i$ is such that $t\in X_{i+1}\setminus X_i$. Denote, in the same manner as in Step~1, the copy of $t$ in $X^{x,t}$ by $\tau(X^{x,t})$.
  Then $Y_{k+1}$ is the graph $Y_k$ together with all of the graphs $X^{x,t}$ and edges joining $x$ with $\tau(X^{x,t})$ for $x\in Y_k\setminus Y_{k-1}$.

Precisely, let $Y_{k+1}$ be the following graph.
Take

$$V(Y_{k+1})=V(Y_k)\cup \bigcup_{t,x} V(X^{x,t})$$
and

\blu{$$E(Y_{k+1})=E(Y_k)\cup \bigcup_{t,x} \bigl (E(X^{x,t})
\cup \{\langle x,\tau(X^{x,t}) \rangle\}\bigr ),$$

where $x\in Y_k\setminus Y_{k-1}$ and $t \in B\setminus X_j$ is such that $\langle \psi^{-1}_x(x),t\rangle \in E(B)$, for some $j\ge k-1$.}

Finally let $C=Y_n$ and let $g\colon C\to B$ be the obvious projection, that is, if $x \in C$ then $x \in \psi(X_j)$ where $\psi$ is an isomorphism from some $X_j$ onto a subgraph of $C$, we let $g(x)=\psi^{-1}(x)$. Then $g$ is a confluent epimorphism by Theorem \ref{confluent-edges}.  
Call the graph $C$ the {\it unfolding} of $B$ with respect to the sequence $(X_i)$. 
The graph  $Y_0$ will be called the {\it kernel} of $C$.

\begin{example}

Let $B$ be the graph having vertices $a,b,c,d,e$ and edges $\langle a,b\rangle$, $\langle a,c\rangle$, $\langle a,d\rangle$, $\langle b,c\rangle$, $\langle c,d\rangle$, $\langle c,e\rangle$, $\langle d,e\rangle$. Let $X_0$ be the subgraph having vertices $a,b$, $X_1$ the subgraph having vertices $a,b,c$, $X_2$ the subgraph having vertices $a,b,c,d$, and $X_3 =B$.  The graphs $Y_0$, $Y_1$ and part of $Y_2$ are shown below.

\bigskip

\begin{center}
    \begin{tikzpicture}
    
   
   \draw (0,0) -- (0,1);
   \node at (0,-0.3) {$a$};
   \draw (0,0) circle (0.04);
   \node at (0,1.3) {$b$};
   \draw (0,1) circle (0.04);
   \node at (0,-1) {$X_0$};
   
   
   \draw (2,0) -- (2,1) -- (1,1) -- (2,0);
  \node at (2,-0.3) {$a$};
   \draw (2,0) circle (0.04);
    \node at (2,1.3) {$b$};
   \draw (2,1) circle (0.04);
    \node at (1,1.3) {$c$};
   \draw (1,1) circle (0.04);
   \node at (1.5,-1) {$X_1$};
   
   
   \draw (4,0) -- (4,1) -- (3,1) -- (4,0);
  \node at (4,-0.3) {$a$};
   \draw (4,0) circle (0.04);
    \node at (4,1.3) {$b$};
   \draw (4,1) circle (0.04);
    \node at (3,1.3) {$c$};
   \draw (3,1) circle (0.04);
   \draw (3,1) -- (3,0) -- (4,0);
  \node at (3,-0.3) {$d$};
   \draw (3,0) circle (0.04);

   \node at (3.5,-1) {$X_2$};
  

   \draw (6.4,0) -- (6.4,1) -- (5.4,1) -- (5.4,0) -- (6.4,0) -- (5.4,1);
  \node at (6.4,-0.3) {$a$};
   \draw (6.4,0) circle (0.04);
    \node at (6.4,1.3) {$b$};
   \draw (6.4,1) circle (0.04);
    \node at (5.4,1.3) {$c$};
   \draw (5.4,1) circle (0.04);
  \node at (5.4,-0.3) {$d$};
   \draw (5.4,0) circle (0.04);
 \draw (5.4,0) -- (5,0.5) -- (5.4,1);
   \node at (4.7,0.5) {$e$};
   \draw (5.0,0.5) circle (0.04);

   \node at (5.9,-1) {$B$};

    \end{tikzpicture}
\end{center}

\begin{center}
\begin{tikzpicture}[scale=0.75]

\draw (0,2) -- (0,3);
\node at (0.3,2) {$a$};
\draw (0,2) circle (0.05);
\node at (0.3,3) {$b$};
\draw (0,3) circle (0.05);

\draw (5,0) -- (5,5);
\draw (3,2) -- (5,2);
\draw (3,2) -- (3,1) -- (4,1) -- (4,2);
\draw (3,2) -- (4,1);
\draw (5,0) -- (6,0) -- (5,1);
\draw (5,4) -- (6,5) -- (5,5);
\node at (5,-0.3) {$b$};
\draw (5,0) circle (0.05);
\node at (6,-0.3) {$a$};
\draw (6,0) circle (0.05);
\node at (5.3,1) {$c$};
\draw (5,1) circle (0.05);
\node at (5.3,2) {$a$};
\draw (5,2) circle (0.05);
\node at (5.3,3) {$b$};
\draw (5,3) circle (0.05);
\node at (4.7,4) {$c$};
\draw (5,4) circle (0.05);
\node at (4.7,5) {$b$};
\draw (5,5) circle (0.05);
\node at (6.3,5) {$a$};
\draw (6,5) circle (0.05);
\node at (3,2.3) {$a$};
\draw (3,2) circle (0.05);
\node at (4,2.3) {$d$};
\draw (4,2) circle (0.05);
\node at (3,0.7) {$b$};
\draw (3,1) circle (0.05);
\node at (4,0.7) {$c$};
\draw (4,1) circle (0.05);

\draw [dashed] (5,2.5) circle (0.6);
\node at (6,2.7) {$X_0$};

\draw [dashed] (5.5,4.5) circle (0.8);
\node at (6.8,4.5) {$X_1$};

\draw [dashed] (5.5,0.5) circle (0.8);
\node at (6.6,0.5) {$X_1$};

\draw [dashed] (3.5,1.5) circle (0.9);
\node at (2.2,1.5) {$X_2$};

\node at (0,-1) {$Y_0$};
\node at (5,-1) {$Y_1$};


\draw (10,3.3) -- (10,0) -- (11,0) -- (10,1) -- (12,1) -- (12,2) -- (11,2) -- (11,1);
\draw (12,1) -- (11,2);
\draw (11,0) -- (13,0) -- (13,-1) -- (12,-1) -- (12,0);
\draw (12,-1) -- (13,0);
\draw (10,1) -- (9.4,1) -- (9,0.5) -- (8,0.5) -- (8,1.5) -- (9,1.5) -- (9.4,1);
\draw (9,0.5) -- (9,1.5) -- (8,0.5);
\draw (9.7,2) -- (10,2);
\draw [dashed] (9.2,2) -- (9.7,2);
\draw [dashed] (10,3.3) -- (10,3.8);

\node at (10.3,3) {$b$};
\draw (10,3) circle (0.04);
\node at (8,0.2) {$a$};
\draw (8,0.5) circle (0.04);
\node at (8,1.8) {$b$};
\draw (8,1.5) circle (0.04);
\node at (9,0.2) {$d$};
\draw (9,0.5) circle (0.04);
\node at (9,1.8) {$c$};
\draw (9,1.5) circle (0.04);
\node at (9.5,0.7) {$e$};
\draw (9.4,1) circle (0.04);
\node at (10.15,1.15) {$c$};
\draw (10,1) circle (0.04);
\node at (10.3,2) {$a$};
\draw (10,2) circle (0.04);
\node at (10,-0.3) {$b$};
\draw (10,0) circle (0.04);
\node at (11,-0.3) {$a$};
\draw (11,0) circle (0.04);
\node at (12,0.3) {$d$};
\draw (12,0) circle (0.04);
\node at (13,0.3) {$a$};
\draw (13,0) circle (0.04);
\node at (12,-1.3) {$c$};
\draw (12,-1) circle (0.04);
\node at (13,-1.3) {$b$};
\draw (13,-1) circle (0.04);
\node at (11,0.7) {$d$};
\draw (11,1) circle (0.04);
\node at (12,0.7) {$a$};
\draw (12,1) circle (0.04);
\node at (11,2.3) {$c$};
\draw (11,2) circle (0.04);
\node at (12,2.3) {$b$};
\draw (12,2) circle (0.04);

\draw [dashed] (10,2.5) circle (0.6);
\node at (11,2.7) {$X_0$};

\draw [dashed] (10.4,0.45) circle (.75);
\node at (11.45,0.45) {$X_1$};

\node at (10.5,-2) {Part of $Y_2$ };

\end{tikzpicture}

\end{center}

\end{example}

Let $A$ be a connected finite graph and let $T$ be a connected subgraph of $A$.  We say that $T\subseteq A$ is  {\it adjacently disconnecting  $A$}
if for every $x\neq y\in A\setminus T$, adjacent to $T$, any path joining $x$ and $y$ intersects $T$.
In other words, $T$ is adjacently disconnecting $A$ if any homomorphism from a finite arc into $A$, which maps one end-vertex of the arc to $x$ and the other to $y$, is such that the intersection of the image of the arc with $T$ is nonempty.

\begin{lemma}\label{adjadis}
 Let $B$ be a finite connected graph and let 
 $\emptyset\neq X_0\subsetneq\ldots\subsetneq X_{n-1}\subsetneq X_n=B$ be 
 an increasing sequence of connected subgraphs of $B$, and let $g\colon C\to B$ be the epimorphim obtained in the construction above. For every $k=0,1,\ldots,n$, let  $C_k$ denote the component of $g^{-1}(X_k)$ that contains the kernel of $C$. Then $C_k$ is adjacently disconnecting  $C$.
\end{lemma}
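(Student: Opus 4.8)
The plan is to read the result off from the tree-of-blocks structure of the unfolding $C$. First I would name the pieces: each copy $X^{x,t}_{i+1}$ produced during the construction, together with the root copy $Y_0=X_0$, is a \emph{block}. The blocks partition $V(C)$; every edge of $C$ is either internal to a single block or is one of the \emph{connecting edges} $\langle x,t\rangle$ added at some step; and each non-root block is joined to exactly one previously built vertex by a single such connecting edge. Hence the blocks form a rooted tree $\mathcal T$ with root $X_0$ in which the connecting edges are bridges, and $g$ maps each block isomorphically onto the corresponding $X_j\subseteq B$. Call $j$ the \emph{level} of a block that is a copy of $X_j$, and write $L(\beta)$ for the level of a block $\beta$.

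The decisive observation is that levels strictly increase along the edges of $\mathcal T$: a child block is spawned from a vertex $x$ of a block $\beta$ of level $j$ through an edge $\langle g(x),t\rangle$ of $B$ with $t\notin X_j$, and the child is the copy of $X_m$ for the least $m$ with $t\in X_m$, so its level $m$ exceeds $j$. I would then analyse $g^{-1}(X_k)=\{v\in C: g(v)\in X_k\}$: a block of level $\le k$ lies entirely inside $g^{-1}(X_k)$, whereas a block of level $>k$ meets $g^{-1}(X_k)$ only in the sub-copy of $X_k$ it contains; and the connecting edge joining a child $\beta'$ to its parent reaches the attaching vertex of $\beta'$, whose image lies in $X_{L(\beta')}\setminus X_{L(\beta')-1}$ and hence lies in $X_k$ exactly when $L(\beta')\le k$.

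From this I would identify $C_k$ precisely. Since levels increase downward, the blocks of level $\le k$ form an ancestor-closed subtree of $\mathcal T$ containing the root, all of whose connecting edges run between level-$\le k$ blocks and so stay inside $g^{-1}(X_k)$; thus their union is connected and lies in the component of $g^{-1}(X_k)$ containing $X_0$ (this is the component meant by ``the one containing the kernel'', since the kernel $Y_1$ itself already meets several components of $g^{-1}(X_k)$). Conversely, any block $\beta'$ of level $>k$ is cut off from $X_0$ inside $g^{-1}(X_k)$: its only links to the rest of $C$ are connecting edges — the one to its parent, at the attaching vertex of $\beta'$, and those to its children — and each of these has an endpoint of level $>k$ (the attaching vertex of $\beta'$ itself, respectively of a child of $\beta'$), hence lies outside $g^{-1}(X_k)$ and cannot be traversed inside it. Therefore $C_k=\bigcup\{\beta: L(\beta)\le k\}$ and $C\setminus C_k=\bigcup\{\beta:L(\beta)>k\}$.

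Finally I would describe the components of $C\setminus C_k$: they are exactly the subtrees of $\mathcal T$ rooted at the \emph{minimal} level-$>k$ blocks, i.e. those whose parent has level $\le k$. If $Q$ is such a subtree with root block $\beta'$ and attaching vertex $\tau$, then the only edge between $Q$ and $C_k$ is the single connecting edge at $\tau$: internal edges never leave a block, and every other connecting edge incident to a block of $Q$ joins two blocks of $Q$. Hence $\tau$ is the unique vertex of $Q$ adjacent to $C_k$, so two distinct vertices $x\neq y$ of $C\setminus C_k$ that are both adjacent to $C_k$ must lie in different components; any path joining them therefore meets $C_k$, which is precisely the assertion that $C_k$ is adjacently disconnecting $C$. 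The main obstacle I anticipate is purely organizational: making rigorous that the blocks form a tree whose bridges are exactly the connecting edges, and then carrying out the reachability bookkeeping inside $g^{-1}(X_k)$ that pins down $C_k=\bigcup\{\beta:L(\beta)\le k\}$. Once these are in place the disconnecting property is immediate.
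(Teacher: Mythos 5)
Your proof is correct and follows essentially the same approach as the paper's: decompose $C$ into the attached copies (blocks) of the $X_i$'s, observe that collapsing each block yields a tree with levels increasing away from the root, identify $C_k$ as the union of the level-$\le k$ blocks, and conclude from the bridge structure. The paper's own proof is only a three-sentence sketch of exactly this; you have merely supplied the bookkeeping it leaves implicit.
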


\begin{proof}
The set of vertices $V(C)$ is the disjoint union of multiple copies of $V(X_0), V(X_1),\ldots, V(X_{n})$. 
For each $k= 0,1,\ldots, n$ the $C_k$ is the  
subgraph of $C$ whose set of vertices is the union of all disjoint copies of $ V(X_0), V(X_1),\ldots, V(X_k)$ that were attached at some stage of the construction. 
Finally, note that if we were to identify the vertices in each of the disjoint copies of the $V(X_i)$'s in $C$ to single vertices, then the resulting graph is a tree. 
\end{proof}


Given a finite graph $G$ and a vertex $x\in G$, the {\it star} of $x$, denoted st$(x)$, is
${\rm st}(x)=\{x\}\cup\{y\in G\colon y \text{ is adjacent to } x\}$.

\blu{

Let $\bf H$ be a topological graph. A vertex ${\bf x}\in O_{\bf H}$ is said to be a {\it star adjacently disconnecting vertex} if  there exists 
$\{H_n,\beta_n\}$, where each $\beta_n\colon H_{n+1}\to H_n$ is a confluent epimorphism between finite connected graphs such that ${\bf H} =\iLim\{H_n,\beta_n\}$, and the following holds for ${\bf x}=(x_n)$:
 for every  $m$ and $n$ with $m<n$ the component  $C_{m,n}$ of $(\beta_m^n)^{-1}({\rm st}(x_m))$ that contains $x_n$ is adjacently disconnecting  $H_n$.
In that case we say that $\{H_n,\beta_n\}$ {\it witnesses} that ${\bf x}$ is a star adjacently disconnecting vertex for ${\bf H}$. 
 }
 
\begin{theorem}\label{mainfund}

The set of star adjacently disconnecting vertices \blu{witnessed by a Fra\"{\i}ss\'e sequence} is dense in $\mathbb G$.

\end{theorem}

\begin{proof}

Let  ${\bf U}$ be a nonempty open set in $\mathbb G$.
Let $\{F_n,\alpha_n\}$ be a Fra\"{\i}ss\'e sequence for $\mathcal G$, where $F_1$ is not a singleton and there is $z \in F_1$ such that $(\alpha_1^\infty)^{-1}(z)\in {\bf U}$. We will construct an inverse sequence $\{K_n,\beta_n\}$ which is also a Fra\"{\i}ss\'e sequence for $\mathcal G$, and sequences of confluent epimorphisms $g_n$ and $f_n$. Each  $K_n$ is the unfolding of $F_{k_n}$ with respect to a prescribed increasing sequence of connected graphs $(X_i)$,  $g_n\colon K_n \to F_{k_n}$ is the mapping defined in the unfolding construction, $f_n\colon F_{k_{n+1}} \to K_n$ is obtained using that $\{F_n,\alpha_n\}$ is a Fra\"{\i}ss\'e sequence, and $\beta_n=f_n \circ g_{n+1}$. We also construct a vertex ${\bf x} \in \mathbb G$ where ${\bf x} =(x_n)$ with $x_n \in K_n$ and $\beta_n(x_{n+1})=x_n$.

Let  $K_1$ be the unfolding of $F_1$ with respect to the sequence $(\{z\}, F_1)$, and $g_1 \colon K_1 \to F_1$ be the mapping from the unfolding construction.
Let $x_1\in K_1$ be the unique vertex in the kernel of $K_1$. Create a graph $G$ by replacing each edge $\langle x_1,y\rangle \in E(K_1)$ by the two edges $\langle x_1,y_1\rangle$ and $\langle y_1,y\rangle$. Then $g\colon G \to K_1$ that maps each $y_1$ to $x_1$ and is the identity otherwise is a confluent epimorphism.  Let $k_1 = 1$ and since $\{F_n, \alpha_n\}$ is a Fra\"{\i}ss\'e sequence there is a $k_2$ and a confluent epimorphism $h\colon F_{k_2} \to G$ such that $g_1\circ g \circ h = \alpha^{k_2}_{k_1}$. Let $f_1= g \circ h$. Note that for some $v \in f_1^{-1}(x_1)$  any vertex in $F_{k_2}$ that is adjacent to $v$ is mapped by $f_1$ to $x_1$.

Suppose that $\{K_n,\beta_n\}$ and  $x_n\in K_n$ with  $\beta_{n-1}(x_n) = x_{n-1}$,   $n\leq N$, are constructed. 
We also have  an increasing  sequence $(k_n)$, $n\leq N+1$, and maps
$f_n\colon F_{k_{n+1}}\to K_n$,  $g_n\colon K_{n}\to F_{k_n}$, $n\leq N$, with $\beta_n=f_n\circ g_{n+1}$ for $n<N$,
and $\alpha^{k_{n+1}}_{k_{n}}=g_{n}\circ f_{n}$ for $n\leq N$.
We now construct $K_{N+1}$, $x_{N+1}\in K_{N+1}$, $f_{N+1}$, and  $g_{N+1}$
with the corresponding properties for $N+1$ instead of $N$.
Then we let $\beta_N=f_N\circ g_{N+1}$.

Let $X_{N+1}=F_{k_{N+1}}$ and 
pick a vertex  $v_{N+1}\in f_N^{-1}(x_N)$ such that all vertices adjacent to $v_{N+1}$ are also mapped to $x_N$, and denote 
$X_0=\{v_{N+1}\}$. Let 
$X_1$ be the component of $f_N^{-1}({\rm st}(x_N))$ that contains $X_0$.
Let for $1\leq i<N$, $X_{i+1}$ be the  component of $(\beta_{N-i}^{N} \circ f_N)^{-1}({\rm st}(x_{N-i}))$ that contains $X_0$.
 Let $K_{N+1}$ be the unfolding of $F_{k_{N+1}}$ with respect to the sequence $(X_i)$, let  $g_{N+1}\colon K_{N+1}\to F_{k_{N+1}}$ be the resulting confluent epimorphism, and let $x_{N+1}$ be the unique vertex in the kernel of $K_{N+1}$.
 Take $\beta_N=f_N\circ g_{N+1}$. 
Because $\{F_n,\alpha_n\}$ is a Fra\"{\i}ss\'e sequence there exists $k_{N+2}$ and a confluent epimorphism $f_{N+1}\colon F_{k_{N+2}}\to K_{N+1}$ such that $\alpha^{k_{N+2}}_{k_{N+1}}=g_{N+1}\circ f_{N+1}$.  
Similarly as in the first step
we can and we do assume that some vertex in $F_{k_{N+2}}$ mapped to $x_{N+1}$ by $f_{N+1}$ is such that all vertices adjacent to it are also mapped to $x_{N+1}$.

The sequence $\{K_n,\beta_n\}$ is a Fra\"{\i}ss\'e sequence, which follows from the fact that $\{F_n,\alpha_n\}$ is a Fra\"{\i}ss\'e sequence and diagram chasing.

This finishes the construction. Now we check that $\{K_n,\beta_n\}$ and ${\bf x}=(x_n)$
have the required properties.

By the construction, $\beta^{n+1}_n({\rm st}(x_{n+1}))=x_n$ for all $n$, which implies ${\bf x}\in O_{\mathbb{G}}$.
Furthermore, for every $m<n$, the $X_{n-m}\subseteq F_{k_{n}}$ is the  component of $(\beta_{m}^{n-1} \circ f_{n-1})^{-1}({\rm st}(x_m))$ that contains $X_0=\{v_n\}\subseteq F_{k_{n}} $, $\beta_m^n=\beta_m^{n-1}\circ f_{n-1}\circ g_n$, and $g_{n}(x_{n})=v_n$. 
Therefore $C_{m,n}$ is the component of $g_n^{-1}(X_{n-m})$ that contains $x_n$, which by  Lemma \ref{adjadis}, is adjacently disconnecting ~$K_n$. Since $\{x_n\}=g_n^{-1}(X_0)$, Lemma \ref{adjadis} additionally implies that $\{x_n\}$  
is adjacently disconnecting  $K_n$.

Finally, the automorphism between $\mathbb{G}=\varprojlim \{F_n, \alpha_n\}$ and
$\mathbb{G}=\varprojlim \{K_n, \beta_n\}$ given by $(f_n)$ and $(g_n)$ takes ${\bf x}$ into ${\bf U}$.
    The image of ${\bf x}$ by any automorphism, in particular by this one, is again star adjacently disconnecting.

\end{proof}

\begin{theorem}\label{almgrasol}
A star adjacently disconnecting vertex ${\bf x}\in \mathbb G$ \blu{witnessed by a Fra\"{\i}ss\'e sequence} does not belong to an almost graph-solenoid.

 \end{theorem}

 \begin{proof}
Let $\{K_n,\beta_n\}$ witness that ${\bf x}$ is a  star adjacently disconnecting vertex.
Let $\mathbb{S}$ be an almost graph-solenoid and write
$\mathbb{S}=\iLim\{C_i,w_i\}$, where $C_i$'s are cycles and  $w_i\colon C_{i+1}\to C_i$ are almost wrapping maps, which have proper confluent witnesses.
Towards the contradiction suppose that  there is an embedding
$\tau\colon \mathbb{S}\to \mathbb{G}
(=\iLim\{K_n,\beta_n\}$)  such that ${\bf x}\in \tau(\mathbb{S})$. Therefore $\tau(\mathbb{S})=\iLim\{S_n,\beta_n|_{S_{n+1}}\}$, where $V(S_n)=V(\beta_n^\infty(\tau(\mathbb{S})))$ and $\langle x,y \rangle \in E(S_n)$ iff $\langle x,y\rangle\in E(K_n)$.
As in the proof of Theorem \ref{notuniversal}  
we obtain 
 increasing sequences $(n_i)$ and $(m_i)$ and surjective homomorphisms  $p_i\colon S_{n_i}\to  C_{m_i}$, $q_i\colon C_{m_{i+1}}\to S_{n_i}\subseteq K_{n_i}$ with $p_i\circ q_i= w^{m_{i+1}}_{m_i}$ and 
$q_i\circ p_{i+1}=\beta^{n_{i+1}}_{n_i} |_{S_{n_{i+1}}}$.

Since almost wrapping maps are closed under compositions (see Lemma \ref{compawrap}), $w^{m_{i+1}}_{m_i}$ is an almost wrapping map. To simplify the notation, we assume that $m_i=i$.
Without loss of generality, we may assume that $C_1$ has at least 6 vertices.
Let  $A=(q_1)^{-1}({\rm st}(x_{n_1}))=(q_1)^{-1}({\rm st}(x_{n_1})\cap S_{n_1})$.

\medskip 

\noindent {\bf{Claim 1.}}
 There is an arc in $C_2\setminus A$ that consists of at least two vertices.

\begin{proof}[Proof of  Claim 1]

Suppose towards the contradiction that any arc in  $C_2\setminus A$ is a singleton.
Then since $w_1$ is a homomorphism, any vertex  $w_1(y)$ for a $y\in C_2\setminus A$, is adjacent to $w_1(A)\subseteq p_1({\rm st}(x_{n_1})\cap S_{n_1})\subseteq {\rm st}(p_1(x_{n_1}))$. 
But this is impossible since ${\rm st}(p_1(x_{n_1}))$ is an arc with three vertices and $C_1$ has at least 6 vertices.

\end{proof}

 Denote $\beta=\beta^{n_2}_{n_1}$,
 let $D$ be the component of $\beta^{-1}({\rm st}(x_{n_1}))$ containing $x_{n_2}$, and let $E=(q_2)^{-1}(D)$.  Since ${\bf x}$ is a star adjacently disconnecting vertex, $D$ is adjacently disconnecting $K_{n_2}$.
Let $\{I_i=[k_i,l_i], J_i=[u_i,v_i], i\leq m\}$ be the partition of $C_3$ into  arcs such that
$E=\bigcup_i J_i$, $l_i$ is adjacent to $u_i$,  $v_{i}$ is adjacent to $k_{i+1}$, and $v_m$ is adjacent to $k_1$.
Write also $A$ as the disjoint union of arcs $M_j=[a_j, b_j]$ in a way that $b_j\in [a_j, a_{j+1}]$.

Let us make several observations.
\begin{enumerate}
\item  For every $i$, $w_2(J_i)\subseteq A$, in particular, $w_2(u_i), w_2(v_i)\in A$. Indeed,  from $\beta\circ q_2=q_1\circ w_2$ it follows that $w_2(E)\subseteq A$.

\item  For every $i$, $w_2(k_i), w_2(l_i)\notin A$ and $w_2(k_i)=w_2(l_i)$. Indeed, suppose towards the contradiction that say $w_2(l_i)\in A$. That implies by $\beta\circ q_2=q_1\circ w_2$ that $\beta\circ q_2(l_i)\in {\rm st}(x_{n_1})$, hence $q_2(l_i)\in \beta^{-1}({\rm st}(x_{n_1}))$.
By the choice of the partition $u_i$ is adjacent to $l_i$, therefore since $q_2$
is a homomorphims, $q_2(u_i)$ is adjacent to $q_2(l_i)$. This along with $q_2(l_i)\in \beta^{-1}({\rm st}(x_{n_1}))$ and $q_2(u_i)\in D$ implies that $q_2(l_i)\in D$. A contradiction.

Since $D$ is adjacently disconnecting, it holds $q_2(k_i)=q_2(l_i)$, and hence   $w_2(k_i)=p_2( q_2(k_i))= p_2(q_2(l_i))=w_2(l_i)$. 

\item  For every $i$, $w_2(l_i)$ is adjacent to $w_2(u_i)$, and $w_2(v_i)$ is adjacent to $w_2(k_{i+1})$. This simply follows from the fact that $w_2$ is a homomorphism.

\item  For every $i$ there is $j$ such that $w_2(J_i)=\{a_j\}$ or  $w_2(J_i)=\{b_j\}$ or  $w_2(J_i)=M_j$. Moreover, in the case $w_2(J_i)=M_j$ and $|M_j|\geq 3$, we have $a_j=w_2(u_i)\neq w_2(v_i)=b_j$.

 It follows from (1), (2) and (3) that there is $j$ such that $w_2(\{u_i, v_i\})=\{a_j\}$ or  $w_2(\{u_i, v_i\})=\{b_j\}$ or  $w_2(\{u_i, v_i\})=\{a_j, b_j\}$.
In the scenario $w_2(\{u_i, v_i\})=\{a_j, b_j\}$ we get in fact $w_2(J_i)=M_j$, as $w_2$ is a homomorphism.
 In the case $w_2(\{u_i, v_i\})=\{a_j\}$ and $b_j\neq a_j$, then by (2) and (3), we obtain $w_2(l_i)=w_2(k_{i+1})$,
 and denote $x=w_2(k_{i+1})$. Let $y\in M_j$ be such that $w_2(J_i)=[a_j,y]$, and take $z\in J_i$ with $w_2(z)=y$.
 Since $w_2$ is an almost wrapping map, by  Lemma \ref{notswaptriples} applied to $x,y$ and $z, k_{i+1}$,
we must have $y=a_j$, and hence $w_2(J_i)=\{a_j\}$.  We similarly conclude that $w_2(J_i)=\{b_j\}$ in the case when $w_2(\{u_i, v_i\})=\{b_j\}$. Moreover if $w_2(u_i)\neq w_2(v_i)$, again by Lemma \ref{notswaptriples}, this time applied to $a_j, b_j$ and $v_i, u_i$, we must have $a_j=w_2(u_i)\neq w_2(v_i)=b_j$ and not $b_j=w_2(u_i)\neq w_2(v_i)=a_j$.
\end{enumerate}

\medskip

\noindent {\bf{Claim 2.}}
There are $i_0,j_0$ such that $w_2(J_{i_0})=M_{j_0}$ and $|M_{j_0}|\geq 3$.

\begin{proof}[Proof of Claim 2]
    Take any $i_0$ such that $x_{n_2}\in q_2(J_{i_0})$. 
Take $j_0$ such that $w_2(J_{i_0})\subseteq M_{j_0}$. Then $p_2(x_{n_2})\in M_{j_0}$. Since $M_{j_0}$ is a component of $A=(q_1)^{-1}({\rm st}(x_{n_1}))$, and $q_1(p_2(x_{n_2}))=x_{n_1}$, 
both vertices adjacent to $p_2(x_{n_2})$ are also in $M_{j_0}$, which implies  $|M_{j_0}|\geq 3$. By Observation (4), we moreover have $w_2(J_{i_0})=M_{j_0}$.
\end{proof}

\bigskip

\blu{To obtain a contradiction needed to finish the proof of Theorem~\ref{almgrasol} we have to consider two cases. 

{\bf Case 1.} 
$A$ is connected, and hence $A=M=[a,b]$. Let $x\notin M$ be adjacent to $a$, and let $y\notin M$ be adjacent to $b$. By Claim 1 we have $x\neq y$. For the $J_{i_0}=[u_{i_0},v_{i_0}]$ as in Claim 2 we have (by Observation (4)) that $w_2(u_{i_0})=a$ and $w_2(v_{i_0})=b$. Therefore $w_2(k_{i_0+1})=w_2(l_{i_0+1})=y$. However, since $x\neq y$ and since $|M|\geq 3$, we must have  $w_2(v_{i_0+1})=w_2(u_{i_0+1})=b$ and $w_2(k_{i_0+2})=w_2(l_{i_0+2})=y$, etc., we continue along the cycle $C_3$ until we reach $J_{i_0}$ again. But then we get $w_2(v_{i_0})=w_2(u_{i_0})=b$, which gives a contradiction.

{\bf Case 2.}  $A$ is disconnected, and hence the complement consists of at least two arcs. Then, by Claim 1, at least one of the arcs in $C_2\setminus A$ has at least two vertices.  Hence the set $Z=\{z\in C_2\colon z \text{ is adjacent to } A\}$ has at least three vertices. We define a cycle $K$ taking $V(K)=Z$, and let $z_1<_K z_2$ iff in $C_2$ we have $(z_1,z_2)\cap Z=\emptyset$.

Let $L$ be the cycle on $m$ vertices, which we denote as $I_1,\ldots, I_m$, where $m$ is, as above, the number of arcs $I_i$ in the decomposition of $C_3$ and we let $I_i <_L I_{i+1}$ for $i=1,\ldots, m$, where $I_{m+1}=I_1$.  Let $u\colon L\to K$ be the map that takes $I_i=[k_i,l_i]$ to $w_2(k_i)=w_2(l_i)$. Then $u$ is a homomorphism by Observations (1)-(4).

Let $a$ and $b$, $a\neq b$, be end-vertices of an arc in $C_2\setminus A$ which has at least two vertices.
 
{\bf (i)} We claim  $\langle a,b\rangle$, as an edge in $K$, is not an image of an edge in~$L$.

Indeed, suppose this is not the case and fix $i$ such that $u(I_i)=a$ and $u(I_{i+1})=b$ (or  $u(I_i)=b$ and $u(I_{i+1})=a$).
In $C_3$ the arc  $J_i$  is adjacent to both $I_i$ and $I_{i+1}$. Let $M$ be the component  of  $A$  such that $w_2(J_i)\subseteq M$.  Then  $M$ is adjacent to both $a$ and $b$. But this is impossible since we are in Case 2.

{\bf (ii)} On the other hand we claim $\langle a,b\rangle $ is  an image of an edge in~$L$.

Take $i_0$ and $j_0$ as in Claim 2. So $J_{i_0}=[u_{i_0}, v_{i_0}]$, and denote $s=u(k_{i_0+1})$ (start) and $e=u(l_{i_0})  $ (end). Clearly $s$  and $e$ are adjacent in $K$. Since $w_2$ is an almost wrapping map and $|w_2(J_{i_0})|\geq 3$, there is no $i_1\leq m$ such that $s=u(l_{i_1})$  and $e=u(k_{i_1+1})$. That is, there are no $x<_L y$ such that $u(x)=s$ and $u(y)=e$ . Since $u$ is a homomorphism and both $s$ and $e$ are in the image of $u$, we obtain that 
any edge in $K$, except possibly $\langle e,s\rangle$, is an image of an edge of $L$. Since $s$ and $e$ are in different components of $C_2\setminus A$, we have $\langle s,e\rangle \neq \langle a,b \rangle$.  Hence $\langle a,b\rangle $ is  an image of an edge in $L$.

Clearly  (i) and (ii) together give a contradiction.
}
\end{proof}

\begin{proof}[Proof of Theorem \ref{mainsol}]

The set of star adjacently disconnecting vertices
witnessed by a  Fra\"{\i}ss\'{e} sequence is dense in 
$\mathbb{G}$, and let ${\bf x}\in\mathbb{G}$ be one of them. We claim that $z=\pi({\bf x})$, where $\pi\colon\mathbb{G}\to |\mathbb{G}|$ is the projection map, does not belong to a solenoid. Suppose towards a contradiction that there is a solenoid $S$ such that $z\in S\subseteq |\mathbb{G}|$. By Lemma \ref{densesingle} there is a topological graph $\mathbb{S}\subseteq \pi^{-1}(S)$ such that  $O_\mathbb{S}$ is dense in $\mathbb{S}$ and $\pi(\mathbb{S})=S$.
Since ${\bf x}\in O_{\pi^{-1}(S)}$, we have ${\bf x}\in\mathbb{S}$. 
By Theorem \ref{soltograph}, $\mathbb{S} $ is an almost graph-solenoid.
On the other hand, by Theorem \ref{almgrasol},  ${\bf x}$ does not belong to an almost graph-solenoid. We obtain a contradiction.

\end{proof}

\section{Open problems}

As we proved in Section 9, we can distinguish points in $
|\mathbb{G}|$: some of them belong to the universal solenoid and some do not. Moreover, 
every solenoid that embeds in $|\mathbb{G}|$, is the universal solenoid.

We do not know if we can further distinguish points by the number of solenoids that contain them. For $x\in |\mathbb{G}|$ let $n_S(x)$ denote the cardinality of the set $$ 
\{S\subseteq |\mathbb{G}| \colon S \text{ is homeomorphic to a solenoid and } x\in S\}.$$

\begin{question}
What are the possible values of $n_S(x)$ for $x\in |\mathbb{G}|$? 
\end{question}

The following was suggested to us by the referee. 
\begin{question}
Solenoids may intersect in a point or in an arc. Can both situations take place for solenoids embedded in $|\mathbb{G}|$? 
\end{question}

Furthermore, we do not know whether there are continua, other than solenoids, which we can use to distinguish points in $|\mathbb{G}|$. In particular, can we distinguish points by belonging or not belonging to the pseudo-arc? In Section 6, we showed that the pseudo-arc can be embedded in~$|\mathbb{G}|$.
\begin{question}
Are there $x\in |\mathbb{G}|$ such that for any embedding $\tau \colon P\to  |\mathbb{G}|$, where $P$ is the pseudo-arc, we have $x\notin \tau(P)$?
\end{question}

The homeomorphism group $ H(|\mathbb{G}|)$ acts on $|\mathbb{G}|$ via $(h,x)
\mapsto h(x)$. 
\begin{question}
What is the cardinality of orbits under this action?
\end{question}

\begin{question}
Are all orbits dense?
\end{question}

We know that the pseudo-arc, the universal pseudo-solenoid, the  universal solenoid, and the Cantor fan, embed in 
$ |\mathbb{G}| $. On the other hand, every continuum which embeds in $ |\mathbb{G}| $ is one-dimensional and hereditary unicoherent. Not all continua with those two properties embed in $ |\mathbb{G}| $ since the universal solenoid is the only solenoid that embeds in $ |\mathbb{G}| $.

\begin{problem}
Which one-dimensional hereditary unicoherent continua can be embedded in $ |\mathbb{G}| $? 
In particular, can we embed the pseudo-circle and the solenoid of pseudo-arcs in $ |\mathbb{G}| $?
\end{problem}

We have shown many properties of the continuum $|\mathbb G|$. We 
would like to have a complete list of properties that characterize $ |\mathbb{G}| $ up to homeomorphism.
\begin{problem}

Find a topological characterization of $|\mathbb{G}|$.
\end{problem}

The homeomorphism group $ H(|\mathbb{G}|)$ becomes a separable topological group when equipped with the supremum metric.
\begin{problem}
Investigate properties of $H( |\mathbb{G}|)$, the homeomorphism group of  $|\mathbb{G}|$.
\end{problem}

Finally, in investigating the homogeneity of $|\mathbb{G}|$ we spent time thinking about the following combinatorial question, which we believe is of independent interest. 
\begin{question}
Given a finite connected graph $H$ does there exist a homogeneous finite connected graph $G$ and a confluent epimorphism $f\colon G \to H$?
\end{question}

{\bf Acknowledgements.}
We would like to thank the referee for  carefully reading  our article and many insightful comments.

\end{document}